\algrenewcommand\algorithmicrequire{\makebox[32pt][l]{\textrm{input}}}
\algrenewcommand\algorithmicensure{\makebox[32pt][l]{\textrm{output}}}
\algrenewcommand\algorithmicfunction{\textrm{function}}
\algrenewcommand\algorithmicwhile{\textrm{while}}
\algrenewcommand\algorithmicdo{}
\algrenewcommand\algorithmicend{\textrm{end}}
\algrenewcommand\algorithmicforall{\textrm{for all}}
\algrenewcommand\algorithmicfor{\textrm{for}}
\algrenewcommand\algorithmicrepeat{\textrm{repeat}}
\algrenewcommand\algorithmicuntil{\textrm{until}}
\DeclareFontFamily{U}{mathx}{\hyphenchar\font45}
\DeclareFontShape{U}{mathx}{m}{n}{
      <5> <6> <7> <8> <9> <10>
      <10.95> <12> <14.4> <17.28> <20.74> <24.88>
      mathx10
      }{}
\DeclareSymbolFont{mathx}{U}{mathx}{m}{n}
\DeclareMathSymbol{\bigtimes}{1}{mathx}{"91}
\theoremstyle{plain}
\newtheorem{theorem}{Theorem}[section]
\newtheorem{lemma}[theorem]{Lemma}
\newtheorem{proposition}[theorem]{Proposition}
\newtheorem{remark}[theorem]{Remark}
\newtheorem{assumptions}[theorem]{Assumptions}
\theoremstyle{definition}
\newtheorem{example}[theorem]{Example}
\numberwithin{equation}{section}
\newcommand{\N}{\mathds{N}}
\newcommand{\R}{\mathds{R}}
\DeclareMathOperator{\linspan}{span}
\DeclareMathOperator{\divergence}{div}
\DeclareMathOperator{\supp}{supp}
\DeclareMathOperator{\ops}{ops}
\newcommand{\id}{{\rm I}}
\newcommand{\spl}[1]{\ell^{#1}}
\newcommand{\spH}[1]{H^{#1}}
\newcommand{\spL}[1]{L^{#1}}
\newcommand{\be}{\begin{equation}}
\newcommand{\ee}{\end{equation}}
\newcommand{\Ocal}{{\mathcal{O}}}
\newcommand{\Hcal}{{\mathcal{H}}}
\newcommand{\Ical}{{\mathcal{I}}}
\newcommand{\Acal}{{\mathcal{A}}}
\newcommand{\Gcal}{{\mathcal{G}}}
\DeclareMathOperator{\rank}{rank}
\DeclareMathOperator{\apply}{\textsc{apply}}
\DeclareMathOperator{\coarsen}{\textsc{coarsen}}
\DeclareMathOperator{\rhs}{\textsc{rhs}}
\DeclareMathOperator{\recompress}{\textsc{recompress}}
\DeclareMathOperator{\solve}{\textsc{solve}}
\providecommand{\abs}[1]{\lvert#1\rvert}
\providecommand{\norm}[1]{\lVert#1\rVert}
\providecommand{\bignorm}[1]{\bigl\lVert#1\bigr\rVert}
\providecommand{\Bignorm}[1]{\Bigl\lVert#1\Bigr\rVert}
\providecommand{\biggnorm}[1]{\biggl\lVert#1\biggr\rVert}
\providecommand{\ceil}[1]{\lceil#1\rceil}
\def\bu{{\bf u}}
\def\bU{{\bf U}}
\def\bV{{\bf V}}
\newcommand{\bv}{{\bf v}}
\newcommand{\bw}{{\bf w}}
\newcommand{\ba}{{\bf a}}
\def\e2{\spl{2}(\nabla^d)}
\def\cA{{\cal A}}
\def\ga{\gamma}
\newcommand{\As}{{\Acal^s}}
\newcommand{\AH}[1]{{\Acal_\Hcal({#1})}}
\newcommand{\kk}[1]{{\mathsf{#1}}}
\newcommand{\hatPsvd}[1]{\operatorname{\hat P}_{#1}}
\newcommand{\hatCctr}[1]{\operatorname{\hat C}_{#1}}
\newcommand{\Restr}[1]{\operatorname{R}_{#1}}
\newcommand\eref[1]{(\ref{#1})}
\newcommand{\beqn}{\begin{equation}}
\newcommand{\eeqn}{\end{equation}}
\newcommand{\bA}{\mathbf{A}}
\newcommand{\bM}{\mathbf{M}}
\newcommand{\bbf}{\mathbf{f}}
\newcommand{\cD}{{\cal D}}
\newcommand{\bB}{\mathbf{B}}
\newcommand{\pdom}{Y}
\newcommand{\sidx}{\mathcal{S}}
\newcommand{\pidx}{\mathcal{F}}
\newcommand{\Sb}{\Sigma^{\bar s}}
\newcommand{\rs}{{\rm x}}
\newcommand{\rp}{{\rm y}}
\newcommand{\pf}{\theta}
\newcommand{\xidx}{\mathcal{G}}
\newcommand{\bb}{\mbox{\boldmath${\beta}$}}
\newcommand{\ab}{\mbox{\boldmath${\alpha}$}}
\def\Chi{\raise .3ex
\hbox{\large $\chi$}}
\title{Parametric PDEs: Sparse or low-rank approximations?\thanks{This work has been supported by ERC AdG 338977 BREAD, DFG SFB-Transregio 40, DFG Research Group 1779, the Excellence Initiative of the German Federal and State Governments
 (RWTH Aachen  Distinguished Professorship), by DARPA-BAA-15-13, and by the Hausdorff Center of Mathematics, University of Bonn.}}
\author{Markus Bachmayr\thanks{Hausdorff Center for Mathematics \& Institute for Numerical Simulation, Wegelerstr.\ 6, 53115 Bonn, Germany (bachmayr@ins.uni-bonn.de)}, Albert Cohen\thanks{Sorbonne Universit\'es, UPMC Univ Paris 06, CNRS, UMR 7598, Laboratoire Jacques-Louis Lions, 4 place Jussieu, 75005 Paris, France (cohen@ljll.math.upmc.fr)} and Wolfgang Dahmen\thanks{Institut f\"ur Geometrie und Praktische Mathematik, RWTH Aachen, Templergraben 55, 52056 Aachen, Germany (dahmen@igpm.rwth-aachen.de)}}
\date{\today}
\begin{document}

\maketitle

\begin{abstract}
We consider adaptive approximations of the parameter-to-solution map for elliptic operator equations depending on a large or infinite number of parameters,
 comparing approximation strategies of different degrees of nonlinearity: sparse polynomial expansions, general low-rank approximations separating spatial and parametric variables, and hierarchical tensor decompositions separating all variables.
We describe corresponding adaptive algorithms based on a common generic template and show their near-optimality with respect to natural approximability assumptions for each type of approximation.
A central ingredient in the resulting bounds for the total computational complexity are new operator compression results for the case of infinitely many parameters.
We conclude with a comparison of the complexity estimates based on the actual approximability properties of classes of parametric model problems, which shows that the computational costs of optimized low-rank expansions can be significantly lower or higher than those of sparse polynomial expansions, depending on the particular type of parametric problem.

\medskip
\noindent \textbf{MSC 2010:} 41A46, 41A63, 42C10, 65D99, 65J10, 65N12, 65N15

\medskip
\noindent \textbf{Keywords:} parameter-dependent PDEs, low-rank approximations, sparse polynomial expansions, a posteriori error estimates, adaptive methods, complexity bounds
\end{abstract}

\section{Introduction}
\newcommand{\cI}{\mathcal{I}}
\newcommand{\cH}{\mathcal{H}}
\newcommand{\cF}{\mathcal{F}}
Complex design, optimization, or uncertainty quantification tasks 
based on parameter dependent families of PDEs arise in virtually all branches of science and engineering.
Typical scenarios are models whose physical properties -- such as diffusivity, transport velocity or domain geometry -- are 
described by a {\em finite} number of real parameter values. In certain instances, one may even encounter
{\em infinitely} many parameters of decreasing influence. This occurs for instance in the case of 
a random stochastic diffusion field represented by an infinite expansion in a given basis. The development
and analysis of numerical strategies for capturing the dependence of the PDE on the parameters 
has been the subject of intensive research efforts in recent years.

\subsection{Problem formulation}

The problems that are addressed in this paper have the following general form.
Let $V$ be a separable Hilbert space. We consider a parametric operator $A(y)\colon V\to V'$ of the form
\be\label{Arep} 
 A(y) := A_0 + \sum_{j\in\cI} y_j A_j\,,\quad y\in\pdom := [-1,1]^\cI \,,  
 \ee
where $\cI=\{1,\dots,d\}$ 
or $\cI= \N$ in the finite or infinite dimensional case, respectively.
In the infinite dimensional case, we require that the above series converges 
in ${\cal L}(V,V')$ for any $y\in \pdom$. We assume uniform boundedness and ellipticity of $A(y)$
over the parameter domain, that is
\be
\label{ellip}
\langle A(y)v,w\rangle \leq R \|v\|_V\|w\|_V \quad {\rm and} \quad
\langle A(y)v,v\rangle \geq r \|v\|_V^2, \quad v,w\in V, \; y\in \pdom,
\ee
for some $0<r\leq R<\infty$, which implies in particular that
$A(y)$ is boundedly 
invertible uniformly in $y\in \pdom$, with
\be
\|A(y)\|_{{\cal L}(V',V)}\leq r^{-1}, \quad y\in \pdom.
\ee
We also consider parametric data $f\colon \pdom \to V'$, and for each $y\in Y$, we define
$u(y)\in V$ the solution to the equation
\begin{equation}\label{paramgeneral}
A(y)\,u(y) = f(y).   
\end{equation}
A guiding example is provided by {\em affinely parametrized} diffusion problems of the form
\begin{equation} \label{paramdiffusion-0}
   A(y) u := -\divergence\bigl( a(y) \nabla u \bigr) = f , \quad  a(y) := \bar a + \sum_{j \in \cI} y_j \pf_j ,
\end{equation}
 with homogeneous Dirichlet boundary conditions, posed in the weak sense on a spatial domain $D \subset \R^m$. 
 In this particular case of frequent interest, the data $f\in V'$ is independent of $y$.
 The validity of \eref{ellip} is then usually ensured by the \emph{uniform ellipticity assumption}
\begin{equation}\label{uea}  
  \sum_{j\geq 1} \abs{\pf_j(x)} \leq  \bar a(x)  - \underline{\alpha}, \quad x\in D,
\end{equation}
for some $\underline{\alpha}>0$. 
We then have $V=\spH{1}_0(D)$, and the corresponding operators $A_j \colon V \to V'$ for $j \in \{0\}\cup \cI$ are defined by
\begin{equation*}
  \langle A_0 u, v \rangle :=  \int_D \bar{a} \nabla u\cdot\nabla v \,dx \,,\qquad   \langle A_j u, v \rangle :=  \int_D \pf_j \nabla u\cdot\nabla v \,dx ,\quad i \in\cI,
\end{equation*}
for $u,v\in V$.

Thus,  $V$ is typically a function space defined over some physical domain $D\subset \R^m$, with $m=1,2,3$, for example the 
Sobolev space $H^1_0(D)$ in the above case of second order elliptic equations with homogeneous Dirichlet
boundary conditions. Therefore the solution may either be viewed
as the Hilbert space valued map 
\begin{equation}
y\mapsto u(y), 
\label{solutionmap}
\end{equation}
which acts from $\pdom$ to $V$ or as the scalar valued map
\be
(x,y)\mapsto u(x,y):=u(y)(x),
\ee
where $x\in D$ and $y\in \pdom$ are referred to as the spatial and parametric variables.

Approximating such solution maps amounts to approximating functions of a large or even infinite number of variables. In applications, one is often interested in specific functionals of the solution. Here we focus on the basic question of approximating the entire solution map in an appropriate norm.

The guiding questions, to be made precise below, are the following:
What are the most suitable approximations to cope with the high dimensionality in problems of the form \eqref{paramgeneral}, and what features of problems \eqref{paramgeneral} favor certain approaches over others?
Moreover, at what numerical cost can one find these approximations, and how do these costs depend on particular features of the given problem?
To address the latter question, for each setting we construct adaptive computational methods that exhibit near-optimal complexity, in a sense to be made precise below.

\subsection{Sparse and low-rank approximability}\label{sec:introapprox}

Before addressing any concrete numerical schemes, we discuss basic concepts of approximations for the solution map $u$ in \eqref{solutionmap}. 
We focus on the mean-square error $\|u-\tilde u\|_{L^2(\pdom,V)}$ for an approximation $\tilde u$, where
\[
   \|v\|_{L^2(\pdom,V)}^2:=\int_\pdom \|v(y)\|_V^2 d\mu(y)
\]
for a given probability measure  $\mu$ over $\pdom$. In what follows, we assume that $\mu$ is the uniform probability measure on $\pdom$. The results carry over, however, to other product measures on $\pdom$.
The following types of approximation make essential use of the {\em tensor product structure} of the Bochner space 
$L^2(\pdom,V)=V\otimes L^2(\pdom)$ where $L^2(\pdom)=L^2(\pdom,\mu)$.

\paragraph{Sparse polynomial expansions.} 
A first approach to approximating $y\mapsto u(y)$ is to employ an {\em a priorily
chosen} basis  
$\{u_1^{\rp}, \dots, u_n^{\rp}\}\subset L^2(Y)$, and compute the $u_i^{\rs}\in V$
as the corresponding coefficients of this approximation.  One prominent example
of this approach are {orthogonal polynomial expansion} methods, see e.g. \cite{GhanemSpanos:90, GhanemSpanos:07,MK:10,Xiu:10}. In this case, the parametric
functions $u_i^{\rp}$ are picked  from the set of {\em tensorized} Legendre
polynomials
\be
L_\nu(y)=\prod_{j\geq 1} L_{\nu_j}(y_j), \quad \nu=(\nu_j)_{j\geq 1},
\label{tensorleg}
\ee
with $(L_k)_{k\geq 1}$ the univariate Legendre polynomial sequence normalized in $L^2([-1,1],\frac {dt} 2)$.
The functions $(L_\nu)_{\nu\in\cF}$ are an orthonormal basis of $L^2(\pdom)$, where $\cF$ is $\N_0^d$ in the case 
$\cI=\{1,\dots,d\}$ or the set of finitely supported sequences of non-negative integers in the case $\cI=\N$, that is
\be
\pidx := \{ \nu\in\N_0^{\N} \colon \#\supp \nu < \infty \}.
\ee
One thus
has
\be
\label{Legendre}
u(y)=\sum_{\nu\in\cF} u_\nu L_\nu(y), \quad u_\nu= \int_\pdom u(y)L_\nu(y) d\mu(y).
\ee
Then, one natural choice is the best $n$-term approximation $u_n$ obtained by restricting the above expansion
to the set $\Lambda^\rp_{n}\subset \cF$ of indices $\nu$ corresponding to the $n$ largest $\|u_\nu\|_V$,
since this set minimizes the error $\|u-u_n\|_{L^2(\pdom,V)}$ among all possible choices
of $n$-term truncations. This strategy for generating sparse polynomial 
approximations in the context of parametric PDEs was first introduced and analyzed in \cite{cds1,cds2}. In practice, 
the set $\Lambda^\rp_n$ is not accessible, but provides a benchmark for the performance of algorithms.

This {\em representational} complexity, however, does not yet determine the resulting {\em computational} complexity, since the coefficients $u_\nu$ in \eqref{Legendre} in turn need to be approximated as well. For instance, one may choose a fixed basis $\{ \psi_\lambda\}_{\lambda\in\sidx}$ of $V$ and expand
\[
     u_\nu = \sum_{\lambda\in \sidx} \bu_{\lambda,\nu} \psi_\lambda ,
\]
in \eqref{Legendre}, where $\bu_{\lambda,\nu}\in\R$. The simplest strategy is to use the same discretization for all $u_\nu$ by selecting a finite $\Lambda^\rs$, which yields the approximation
\[
  u \approx \sum_{\nu \in \Lambda_{n}^\rp}  \Bigl( \sum_{\lambda\in \Lambda^\rs} \bu_{\lambda,\nu} \psi_\lambda \Bigr)\otimes L_\nu.
\]
Using instead an independently adapted spatial discretization for each $u_\nu$ corresponds to \emph{adaptive sparse polynomial} approximations of the form
\begin{equation}\label{fullnterm}\tag{ASP}
     u \approx \sum_{(\lambda, \nu) \in \Lambda} \bu_{\lambda,\nu}\, \psi_\lambda\otimes L_\nu.
\end{equation}
with $\Lambda \subset \sidx \times \pidx$. It is natural to quantify the complexity of such an approximation by the number of activated degrees of freedom $\#\Lambda$.
Here one can again ask for best $N$-term approximations, now with respect to the fixed basis $\{\psi_\lambda\otimes L_\nu\}_{\lambda\in\sidx,\nu\in\pidx}$, obtained by minimizing the error over all $\Lambda$ with $\#\Lambda = N$. This now results in a fully discrete approximation.

\paragraph{Low-rank approximation.}
More generally, one may consider approximations of the form
\begin{equation}\label{basiclrapprox}
 u \approx u_n :=\sum_{k=1}^n u^\rs_k \otimes u^\rp_k,
\end{equation}
where $u^\rs_k$ and $u^\rp_k$ are functions of the spatial and parametric variable, respectively. This contains \eqref{Legendre} as a special case, but we now allow also $u^\rp_k \in L^2(Y)$ to be arbitrary functions that are not given a priori, but adapted to the given problem. 

The shortest expansion of the form \eqref{basiclrapprox} that achieves a prescribed error in  $L^2(\pdom,V)$ is  given by truncation of the Hilbert-Schmidt decomposition of $u$ interpreted as the operator 
\be\label{Tu}
T_u:  v \to \int_Y u(x,y)v(y) d\mu(y),
\ee
acting from $L^2(\pdom)$ to $V$. 
In this context, we define $\rank(u)$ as the rank of the operator $T_u$, so that in particular $u_n$ with a representation by $n$ separable terms as in \eqref{basiclrapprox} has $\rank(u_n)\leq n$.
The functions $u_1^\rs, \dots, u_n^\rs$ and $u_1^\rp, \dots, u_n^\rp$ are given by the left and right singular functions, respectively, which yield the optimal rank-$n$ approximation of $u$ in $L^2(\pdom,V)$.

 This particular system of  basis functions is a natural benchmark as it minimizes the rank $n=n(\varepsilon)$ required to ensure
a mean-square accuracy $\varepsilon$. However, it is  not obvious how to compute sufficiently good approximations of these
basis functions at affordable cost, a point to be taken up again later.

The methods considered in this paper are based on computing approximations of both $u_k^\rs$ and $u_k^\rp$.
A low-rank approximation trying to approximately realizing a truncated Hilbert-Schmidt decomposition would be a first example
for this category   aiming at meeting the above mentioned benchmark. In this case the error caused by truncation
should ideally be balanced against the error in approximating the {\em unknown} basis functions $u^\rs_k, u^\rp_k$.

Note that there exist alternative approaches for deriving computable expansions of the form
\eqref{basiclrapprox}, where only the functions $u_1^{\rs}, \dots, u_n^{\rs}$ and their span $V_n$ are constructed, which we comment on in \S\ref{sec:relationprevious}.

To obtain numerically realizable approximations, we may again use bases of $V$ and $L^2(Y)$ as in \eqref{fullnterm} and consider expansions for $u^\rs_k$ and $u^\rp_k$ to arrive at fully discrete \emph{low-rank} approximations of the form
\begin{equation}\label{fulllr}\tag{LR}
    u \approx \sum_{k=1}^n \Bigl( \sum_{\lambda\in \Lambda^\rs_k} \bu^\rs_{k,\lambda} \psi_\lambda \Bigr)\otimes \Bigl(\sum_{\nu\in \Lambda^\rp_k} \bu^\rp_{k,\nu} L_\nu \Bigr) .
\end{equation}
with $\Lambda^\rs_k \subset \sidx$, $\Lambda^\rp_k\subset \pidx$, $k=1,\ldots,n$.

\paragraph{Hierarchical tensor decompositions.}
One may as well go beyond the Hilbert-Schmidt decomposition \eqref{basiclrapprox}
and consider higher-order low-rank tensor representations that correspond to further decompositions of the factors $u^\rp_k$ in \eref{basiclrapprox}. 
For simplicity, at this point let us consider this in the finite-dimensional case $d<\infty$, possibly after truncating the expansion \eref{Arep} for $A(y)$.
Introducing an additional tensor decomposition of the factors $\bu^\rp_k$, we obtain the general approximations in \emph{subspace-based tensor formats},
\be
\label{general}\tag{STF}
u_n = \sum_{k_\rs=1}^{r_\rs} u_{k_\rs}^\rs \otimes \Big(\sum_{k_1=1}^{r_1}\cdots\sum_{k_d=1}^{r_d} \ba_{k_\rs,k_1,\ldots,k_d}\bigotimes_{j=1}^d u^{\rp,j}_{k_j}
\Big),
\ee
where each $u_{k_j}^{\rp,j}$ is a function of the individual variable $y_j$. The minimal $r_j$ such that $u_n$ can be represented in the form \eqref{general} are called \emph{multilinear ranks} of $u_n$.

We confine our discussion to \emph{hierarchical tensor representations} (with the \emph{tensor train format} as a special case), see e.g.\ \cite{Grasedyck:10,Hackbusch:12,Oseledets:09}, where the high-order core tensor ${\ba}=(\ba_{k_\rs,k_1,\ldots,k_d})_{k_\rs,k_1,\ldots,k_d}$ is further decomposed in terms of lower-order tensors, based on matricizations of $\ba$.
For instance, if
\be
\label{ranks}
\tilde r_i= \rank\big(\ba_{(k_0,\ldots, k_i),(k_{i+1},\ldots k_d)}\big),
\ee
one has a factorized representation of the form
\begin{equation}\label{htcore}
	  {\ba}_{k_\rs,k_1,\ldots,k_d} = \sum_{\ell_1=1}^{\tilde r_1}\bM^{(1)}_{k_\rs,\ell_1} \sum_{\ell_2=1}^{\tilde r_2} \bM^{(2)}_{\ell_1,k_1,\ell_2}  \cdots \sum_{\ell_{d-1}=1}^{\tilde r_{d-1}} \bM^{(d-1)}_{\ell_{d-2},k_{d-2},\ell_{d-1}} \bM^{(d)}_{\ell_{d-1},k_{d-1},k_d} 
\end{equation}
in terms of the tensors $\bM^{(i)}$, $i=1,\ldots,d$, of order at most three, and only these low-order tensors need to be stored and manipulated.

The representation \eqref{general} contains \eqref{fullnterm} and \eqref{fulllr} as special cases. For instance, to recover a sparse polynomial expansion \eqref{fullnterm}, let $\nu(k_\rs)$, $k_\rs=1,\ldots,r_\rs$, be an enumeration of elements of $\N^d_0$, and 
choose $\ba_{k_\rs,k_1,\ldots,k_d} = \delta_{\nu(k_\rs),(k_1,\ldots,k_d)}$, $u^{\rp,j}_{k_j}= L_{k_j}$. 
With $r_\rs = r_1=\ldots=r_d$ and diagonal core tensor $\ba$ having nonzero entries $\ba_{k,k,\ldots,k}=1$ for $k = 1,\ldots, r_\rs$, one obtains representations \eqref{fulllr}.

\subsection{Guiding questions}

In the different types of approximation outlined above, the degrees of freedom enter in varying degrees of nonlinearity. More strongly nonlinear approximations \eqref{general} with hierarchical decomposition \eqref{htcore} can potentially yield more strongly compressed representations, in the sense that the number of degrees of freedom $n_{\text{dof}}(\varepsilon)$ required for a target accuracy $\varepsilon$ in $L^2(Y,V)$ scales more favorably. Handling this stronger compression in the computation of such representations, however, leads to additional difficulties, and the number of required operations $n_{\rm op}(\varepsilon)$ may in fact scale less favorably than $n_{\text{dof}}(\varepsilon)$.

Here we aim for algorithms which, for each of the above types of approximation, are guaranteed to achieve any prescribed accuracy $\varepsilon$, and which are universal. This means that they do not require a priori knowledge on the approximability of the solution (e.g., on the decay of coefficients), but adjust to such approximability automatically. This goes hand in hand with a mechanism for obtaining a posteriori error bounds, making use only of the given data.
This leads us to our first guiding question:\medskip

	\noindent{\rm(I)} {\it For a given parametric problem and approximation format \eref{fullnterm}, \eref{fulllr} or \eref{general}, can one contrive a universal numerical scheme that can achieve any given target accuracy $\varepsilon$, with approximate solutions close to the minimum required representation complexity $n_{\rm dof} (\varepsilon)$, and can $n_{\rm op}$ be related to $\varepsilon$ and hence to $n_{\rm dof}(\varepsilon)$?}\medskip

The minimum required representation complexity can be expressed in terms of the intrinsic approximability properties of the parametrized solutions $u$ in each of the formats. The corresponding required number of operations also depends on the problem data that are used in the solution process.
We construct algorithms, based on a common generic strategy for \eref{fullnterm}, \eref{fulllr}, and \eref{general}, which are near-optimal in this regard. With such algorithms at hand, a natural further question is the following. \medskip

\noindent{\rm (II) \it Which of the approximation types \eref{fullnterm}, \eref{fulllr}, or \eref{general} is best suited for a given parametric problem, in the sense of leading to the smallest growth of $n_{\rm op}(\varepsilon)$ as $\varepsilon\to 0$?}\medskip
	
This amounts to asking for which parametric problems the investment into approximations of higher structural nonlinearity pays off, or conversely, for which problems possible gains in approximation efficiency are offset by more demanding computations.
We address this point by analyses of the approximability of model problems, complemented by numerical experiments, with conclusions depending on the particular problem type. 

For problems with finitely many parameters that are each of comparable influence, hierarchical tensor representations of the form \eqref{general} with \eqref{htcore} turn out to be clearly advantageous. 
	In the case of an anisotropic dependence on infinitely many parameters, for representative model problems we demonstrate that \eqref{fullnterm} can in general yield faster convergence than \eqref{fulllr} or \eqref{general}.
The particular structure of such infinite parameter expansions also turns out to have a major influence on the efficiency of the adaptive schemes.

\subsection{Relation to previous work}
\label{sec:relationprevious}

There is a variety of results on the convergence of sparse polynomial expansions \eqref{Legendre}, see, e.g., \cite{cds1,cds2,BCM:2015}. Furthermore, some estimates are available that include multilevel spatial discretizations and hence provide upper bounds for the error of best $n$-term approximation \eqref{fullnterm}, see, e.g., \cite{cds2,Dung:15}.
Concerning our question (II), there are only few specialized results comparing the different approximation
formats. In the case of general bivariate functions, a systematic comparison between sparse grids and 
low rank approximation is discussed in \cite{GH:14}, showing in particular that for 
Sobolev classes the latter does not bring any improvement. In the case of high-dimensional functions
associated to parametric PDEs, possible gains by low-rank approximations
have been identified in \cite{LMQR:13,BC:15} by exploiting the particular structure of the problem,  
all concerning the case of finitely many parameters. 

There are various approaches for generating sparse polynomial expansions, for instance based on collocation \cite{BNT:10,BTNT:12} or adaptive Taylor expansion \cite{CCS:14}. Note that these strategies do not currently yield {\it a posteriori} error bounds for the computed solutions, and their performance is thus described by {\it a priori} estimates which may not be sharp.

The adaptive methods proposed in \cite{EGSZ:14,EGSZ:15}, based on finite element discretization for the spatial variable, yields a posteriori error bounds for the full approximations. However, the complexity bounds proven in \cite{EGSZ:15} 
 are given only in terms of the resulting finite element meshes.

Adaptive schemes using wavelet-based spatial discretizations, which yield approximations of the form \eqref{fullnterm}, have been studied by Gittelson \cite{Gittelson:13, Gittelson:14}. In this case, bounds for the complete computational complexity are proven which, however, do not fully comply with
the approximability properties of the solution.

Reduced basis and POD methods \cite{RHP:08,KV:07,LMQR:13,R:14} correspond to expansions of the form \eqref{basiclrapprox}, where only the spatial basis elements $u^\rs_k$ spanning $V_n$ are explicitly computed in an offline stage. Then, in an online stage, for any given $y\in \pdom$, the approximate solution $u_r(y)$ is defined as the Galerkin projection of $u(y)$
on the space $V_n$. For known variants of these methods, accuracy guarantees in the respective norms (where reduced basis methods usually aim at the error in $L^\infty$-norm $\|v\|_{L^\infty(\pdom,V)} :=\sup_{y\in \pdom} \|v(y)\|_V$) require a sufficiently dense sampling of the parameter domain.
This  becomes prohibitive for large $d$, and one only obtains a posteriori bounds for the resulting $V$-error in each given $y\in\pdom$.

In methods based on higher-order tensor representations, instead of sampling in the parameter domain, one also approximates $u^\rp_k$ as in \eqref{general}, at the price of additional approximability requirements as in \eqref{htcore}. 
A variety of schemes have been proposed that operate on fixed discretizations \cite{Khoromskij:11-1,KressnerTobler:11,KO:10,Matthies:12}, which do not yield information on the discretization error.
Based on \cite{EGSZ:14}, an adaptive scheme for hierarchical tensor approximation is proposed in \cite{EPS:15}. It provides rigorous a posteriori bounds for the approximation error, but is not proven to converge.

\subsection{Novelty of the paper and outline}

Question (I) is addressed in sections \S \ref{sec:generic} to \S \ref{sec:lowrank}. A generic
algorithm is described in \S \ref{sec:generic} based on the work in \cite{BD}, which is {\em guaranteed} to converge without any a priori assumptions 
on the solution. Furthermore, it yields rigorous a posteriori error bounds, using only information on the problem data. Suitable specifications cover all above mentioned types of approximations \eqref{fullnterm}, \eqref{fulllr}, and \eqref{general}. The scheme is formulated in 
a general sequence space framework, using a 
discretization of the space $L^2(\pdom,V)$ through a basis with elements
of the form $\psi_\lambda\otimes L_\nu$. Here, $\{\psi_\lambda\}_{\mu\in \sidx}$
is a given Riesz basis of $V$ (for example, a wavelet basis in the case where $V$ is a
Sobolev space) and $\{L_\nu\}_{\nu \in\pidx}$ is the previously
described multivariate Legendre basis. The algorithm performs an iteration
in the sequence space $\ell^2(\sidx\times \pidx)$. It involves at each
step specific routines $\recompress$ and $\coarsen$ aiming at  controlling
the rank of the current approximation as well as the number of degrees of
freedom in each of its factors, respectively.

We then describe realizations
of this generic algorithm corresponding to 
two distinct settings. In \S \ref{sec:iso} we apply the algorithm for the generation
of approximations \eref{general} in the setting of finitely many parametric variables. In this case the 
$\recompress$ routine is based on a truncation of a hierarchical singular value decomposition of the coefficient tensor.
We analyze the performance of the algorithms for classes described
by the decay of the corresponding singular values and joint sparsity of the
corresponding singular vectors. 

\S \ref{sec:sparselegendre} and \S\ref{sec:lowrank} are devoted to the case of {\em anisotropic} dependence on infinitely many parameters
in the diffusion problem \eqref{paramdiffusion-0}.
In \S \ref{sec:sparselegendre} we analyze a specialized version of Algorithm \ref{alg:tensor_opeq_solve} producing 
$n$-term sparse Legendre expansions, see \eqref{fullnterm}. In this version the routine $\recompress$ is simply the identity, and hence
Algorithm \ref{alg:tensor_opeq_solve} agrees with the adaptive solver developed and analyzed in \cite{Cohen:02}.
In \S \ref{sec:lowrank} we consider, in the same setting as in \S \ref{sec:sparselegendre}, a solver for approximations \eqref{fulllr}.
In this case the $\recompress$ routine is based on standard SVD truncation.
The corresponding notions of approximability are analogous to those arising in \S\ref{sec:iso}.

A key ingredient in \S \ref{sec:sparselegendre} and \S\ref{sec:lowrank} is the adaptive approximation of the operator based on matrix compression results in Appendix \ref{sec:opapprox}.
Here we obtain new estimates for wavelet-type multilevel expansions of the parametrized coefficients that are more favorable than what is known for Karhunen-Lo\`eve-type expansions.
Our further algorithmic developments also require substantially weaker assumptions on the $A_j$ in \eqref{Arep} than the methods in \cite{EGSZ:14,EPS:15}, which require summability of $(\norm{A_j})_{j\geq 1}$.
By the new operator compression results, we establish, in particular, computational complexity estimates for \eqref{fullnterm} which significantly improve on those of similar schemes in \cite{Gittelson:13,Gittelson:14}.

Based on these complexity estimates, question (II) is then addressed in \S \ref{sec:approximability}.
While the presented algorithms are guaranteed to converge, the corresponding computational cost can only be quantified in terms of approximability properties of solutions $u$. In \S \ref{sec:approximability}, we study the corresponding properties, which are different for each realization of the scheme, in representative examples of parametric problems of the form \eqref{paramdiffusion-0}. 
In particular, for a certain class of such problems,
we prove that the best $n$-term Legendre approximation is already asymptotically near-optimal among all rank-$n$ approximations. 
For other examples, we prove that optimized low-rank approximations can achieve significantly better complexity than best $n$-term Legendre approximations.
This is illustrated further by numerical tests, demonstrating that these observations also hold for more involved model problems.

\section{A generic algorithm}\label{sec:generic}

In this section, we follow the approach developed in \cite{BD},
by first reformulating the general equation \eqref{paramgeneral} in a sequence space,
and then introducing a generic resolution algorithm based on this 
equivalent formulation.

We first notice that \eqref{paramgeneral} may also be written as
\be
Au=f,
\label{paramgeneral1}
\ee
where $A$ is elliptic and boundedly invertible from $L^2(\pdom,V)$ to $L^2(\pdom,V')$ and can be defined in a weak sense by
\be
\langle Au,v\rangle :=\int_{\pdom}\langle A(y)u(y),v(y)\rangle d\mu(y), \quad u,v\in L^2(\pdom,V).
\ee
We assume that $f\in L^2(\pdom,V')$, so that there exists a unique solution $u\in L^2(\pdom,V)$.

Given a Riesz basis $\{ \psi_\lambda \}_{\lambda \in \sidx}$ of $V$, we tensorize it
with the orthonormal basis $\{L_\nu\}_{\nu\in \pidx}$ of $L^2(\pdom)$. The resulting
system $\{\psi_\lambda \otimes L_\nu\}_{(\lambda,\nu)\in \sidx \times \pidx}$ is a Riesz basis of $L_2(\pdom,V)$,
which we now use to discretize \eref{paramgeneral1}. For this purpose, we define the matrices
\be
\bA_j := \bigl( \langle A_j \psi_{\lambda'}, \psi_{\lambda}\rangle \bigr)_{\lambda,\lambda' \in \sidx}
\quad{\rm and} \quad  \bM_j = \left( \int_{\pdom} y_j  L_\nu(y) L_{\nu'}(y) \,d\mu(y)  \right)_{\nu,\nu'\in\pidx},
\ee
where $\bM_0$ is set to be the identity on $\ell^2(\cF)$, and the right hand side column vector
\be
\bbf := \big(\langle f, \psi_\lambda\otimes L_\nu\rangle\big)_{(\lambda,\nu)\in \sidx\times\pidx}.
\ee
We thus obtain an equivalent problem
\beqn
\label{equiv}
\bA \bu = \bbf
\eeqn
on $\spl{2}(\sidx \times \pidx)$, where
\beqn
\label{Arep2}
 \bA := \sum_{j\geq 0} \bA_j \otimes \bM_j  
 \eeqn
and $\bu=\big(u_{\lambda,\nu}\big)_{(\mu,\nu)\in \sidx\times\pidx}$ is the coordinate vector 
of $u$ in the basis  $\{\psi_\mu \otimes L_\nu\}_{(\mu,\nu)\in \sidx\times\pidx}$. 

Regarding $\nu\in\pidx$ as the column index of the infinite matrix $\bu=(\bu_{\mu,\nu})_{\mu \in \sidx,\nu\in\pidx}$, we denote by $\bu_\nu$ the columns of $\bu$, which are precisely the basis representations of the Legendre coefficients $u_\nu\in V$.

 In what follows we always denote by $\norm{\cdot}$ the $\spl{2}$-norm on the respective index set which could be $\sidx$, $\pidx$ or $\sidx\times\pidx$, or the corresponding operator norm when this is clear from the context.
Since $\{\psi_\mu\}_{\mu\in \sidx}$ is a Riesz basis for $V$ we have $\|u_\nu\|_{V}\sim \|\bu_\nu\|$ uniformly in $\nu\in\pidx$,
which together with boundedness and ellipticity of $A$ implies that $\bA$ is bounded and elliptic on $\spl{2}(\sidx\times\pidx)$ and that we have
\beqn
\label{normequiv2}
\|\bu\|\sim \|\bA\bu\|\sim \|Au\|_{L^2(\pdom,V')}\sim \|u\|_{L^2(\pdom,V)}
\eeqn
with uniform constants. 
On account of \eqref{normequiv2}, solving \eqref{equiv} approximately up to some target accuracy is equivalent to solving \eqref{equiv} in $\spl{2}$
to essentially the same accuracy. 

As a further consequence, one can find a fixed positive $\omega$ such that $\norm{\mathbf{I} - \omega \bA}\le \rho <1$, ensuring that a simple Richardson iteration converges with a fixed error reduction rate per step.
This serves as the conceptual starting point for the adaptive low-rank approximation scheme introduced in \cite{BD} as given in Algorithm \ref{alg:tensor_opeq_solve}.

\begin{algorithm}[!ht]
\caption{$\quad \mathbf{u}_\varepsilon = \solve(\mathbf{A},
\mathbf{f}; \varepsilon)$} \begin{algorithmic}[1]
\Require \begin{minipage}{12cm}$\omega >0$ and $\rho\in(0,1)$ such that
$\norm{\id - \omega\mathbf{A}} \leq \rho$, $\lambda_{\bA} \leq \norm{\bA^{-1}}^{-1}$, \\
$\kappa_1, \kappa_2, \kappa_3 \in (0,1)$ with $\kappa_1 +
\kappa_2 + \kappa_3 \leq 1$, and $\beta \geq 0$.\end{minipage}
\Ensure $\mathbf{u}_\varepsilon$ satisfying $\norm{\mathbf{u}_\varepsilon -
\mathbf{u}}\leq \varepsilon$.
\State $\mathbf{u}_0 := 0$, $\delta := \lambda_\mathbf{A}^{-1}
\norm{\mathbf{f}}$ 
\State $k:= 0$, $J := \min\{ j \colon \rho^j (1 + (\omega + \beta) j) \leq
\frac12\kappa_1\}$\label{alg:jchoice}
\While{$\frac1{2^k} \delta > \varepsilon$}
\State $\mathbf{w}_0:=\mathbf{u}_k$, $j \gets 0$
\Repeat
\State $\eta_j := \rho^{j+1} \frac1{2^k}\delta$
\State $\mathbf{r}_j := \apply( \mathbf{w}_j ; \frac{1}{2}\eta_j)
- \rhs(\frac{1}{2}\eta_j)$ 
\State $\mathbf{w}_{j+1} := \recompress(\mathbf{w}_j - \omega \mathbf{r}_j ;
\beta \eta_j)$ \label{alg:tensor_solve_innerrecomp}
\State $j\gets j+1$.
\Until{($j \geq J \quad \vee \quad \lambda_{\mathbf{A}}^{-1} \rho
\norm{\mathbf{r}_{j-1}} + (\lambda_{\mathbf{A}}^{-1} \rho  + \omega + \beta)
\eta_{j-1} \leq \frac1{2^{k+1}} \kappa_1 \delta$)} \label{alg:cddtwo_looptermination_line}
\State $\mathbf{u}_{k+1} := \coarsen\bigl(\recompress(\mathbf{w}_j;
\frac1{2^{k+1}} \kappa_2  \delta) ; \frac1{2^{k+1}}\kappa_3
\delta\bigr)$\label{alg:cddtwo_coarsen_line} 
\State $k \gets k+1$
\EndWhile
\State $\mathbf{u}_\varepsilon := \mathbf{u}_k$ 
\end{algorithmic}
\label{alg:tensor_opeq_solve}
\end{algorithm}

This basic algorithmic template can be used to produce various types of sparse and low-rank approximations, with appropriate choices of the subroutines $\apply$, $\rhs$, $\coarsen$, and $\recompress$.

The procedures $\coarsen$ and $\recompress$ are independent of the considered $\bA$ and $\bbf$, and satisfy
\begin{equation}\label{craccuracy}
  \norm{\coarsen(\bv ; \eta ) -  \bv } \leq \eta,\quad  \norm{\recompress(\bv;\eta) - \bv} \leq \eta,
\end{equation}
for any $\eta \geq 0$ and any compactly supported $\bv \in \spl{2}(\sidx\times\pidx)$. Here $\coarsen$ is intended to reduce the support of the sequence $\bv$, whereas $\recompress$ reduces the rank of $\bv$ in a low-rank tensor representation. The particular realizations of these routines depend on the dimensionality of the problem and on the type of approximation. We shall use the constructions given in \cite{BD}. In the case of the sparse approximations considered in \S\ref{sec:sparselegendre}, $\recompress$ is chosen as the identity, and Algorithm \ref{alg:tensor_opeq_solve} essentially reduces to the method analyzed in \cite{Cohen:02}.

The routines $\apply$ and $\rhs$ are assumed to satisfy, for compactly supported $\bv$ and any $\eta>0$, the requirements
\begin{equation}\label{applyaccuracy}
  \norm{\apply(\bv ; \eta ) - \bA \bv } \leq \eta,\quad  \norm{\rhs(\eta) - \bbf} \leq \eta.
\end{equation}
Their construction not only depends on the type of approximation, but also on the specific problem under consideration.
These two routines are indeed the main driver of adaptivity in Algorithm \ref{alg:tensor_opeq_solve}, and a major part of what follows concerns the construction of $\apply$ in different scenarios. 

It hinges on the compression of matrices by exploiting their near-sparsity in certain basis representations.
We use the following notion introduced in \cite{Cohen:01}: A bi-infinite matrix $\bB$ is called \emph{$s^*$-compressible} if there exist matrices $\bB_{n}$ with $\alpha_{n} 2^n$ entries per row and column and such that 
\beqn
\label{compress}
\norm{\bB - \bB_{n}} \leq \beta_{n} 2^{-s n}, \quad \mbox{ for  }\,\, 0 < s < s^*,
\eeqn 
and where the sequences $\ab = (\alpha_n)_{n\in\N_0}$ and $\bb = (\beta_n)_{n\in\N_0}$ are summable. Here we always assume $\bB_{0} = 0$.

\begin{remark}
\label{rem:termination}
As shown in \cite{BD}, regardless of the specifications of the routines  $\apply, \rhs$, $\coarsen, \recompress$,
Algorithm \ref{alg:tensor_opeq_solve}
terminates after finitely many steps and its output $\bu_\varepsilon$ satisfies $\|\bu - \bu_\varepsilon\|\le \varepsilon$.
\end{remark}

At this point, we record for later usage a particular feature of $\bA$ that arises as a consequence of our choice of tensor product orthogonal polynomials for the parameter-dependence:
The approximate application of $\bA$ is facilitated by the fact that
the matrices $\bM_j$ are {\em bidiagonal}. That is, in view of the three-term recurrence relation
\beqn
\label{threeterm}
tL_n(t)= p_{n+1} L_{n+1}(t) + p_{n} L_{n-1}(t), \quad L_{-1}\equiv 0,
\eeqn
where
\beqn
\label{recurr_coeff}
p_0 = 0, \qquad p_n= \frac{1}{\sqrt{4 - n^{-2}}} , \quad n>0,
\eeqn
one has $\int_U y_j\,L_\nu(y)L_\mu(y)\,d\mu(y) =0$ whenever $j\notin \supp\,\nu \cup \supp\,\mu$, providing
\beqn
\label{threeterm_matrix}
(\bM_j)_{\nu,\nu'}= p_{\nu_j}\delta_{\nu +e^j,\nu'}+ p_{\nu_j-1}\delta_{\nu-e^j,\nu'}
\eeqn
with the Kronecker sequence $(e^j_i)_{i\in \cI}:=(\delta_{i,j} )_{i\in\cI} \in \pidx$.

\section{Hierarchical tensor representations in the case of finitely many parameters}
\label{sec:iso}

We begin by considering the setting
\begin{equation}
\label{cId}
 \cI=\{1,\dots,d\},
\end{equation}
where $\pidx = \N_0^d$ and $\bu \in \spl{2}(\sidx \times \N_0\times\cdots\times\N_0)$. 

Here we are interested in the case that all coordinates in $\cI$ have comparable influence. As illustrated in \S\ref{sec:approximability}, a direct sparse Legendre expansion of $\bu$ over $\sidx\times\pidx$ will then in general be infeasible already for moderately large $d$. However, one may as well exploit Cartesian product structure in $\pidx$, regarding $\bu$ as a higher-order tensor, and use corresponding hierarchical low-rank representations. As we shall detail in what follows, the results of \cite{BD} can be adapted to this problem in a rather straightforward manner. 

It will be convenient to introduce a numbering of tensor modes as follows: $\xidx_\rs := \sidx$, $\xidx_1 := \N_0$, \ldots, $\xidx_d := \N_0$. We additionally introduce the notation 
\[
  \hat\cI := \{\rs\}\cup\cI .
\]
The representations of higher-order tensors which we consider are built on the Hilbert-Schmidt case via \emph{matricizations}: for each nonempty $M \subset \hat\cI$, any $\bv \in \spl{2}(\sidx\times\pidx)$ induces a compact operator $T_\bv^{(M)} \colon \spl{2}(\bigtimes_{i\in \hat\cI\setminus M} \xidx_i) \to \spl{2}(\bigtimes_{i\in M} \xidx_i)$.

In terms of the left singular vectors $\{ \bU^{(i)}_k \}_{k \in \N}$ of $T_\bv^{(\{i\})}$, $i\in \hat\cI$, we obtain the \emph{HOSVD representation} \cite{Lathauwer:00} in the \emph{Tucker format} \cite{Tucker:64,Tucker:66},
\begin{equation}\label{tuckerformat}
  \bv = \sum_{k_\rs=1}^{r_\rs} \sum_{k_1 = 1}^{r_1} \cdots \sum_{k_d=1}^{r_d} \ba_{k_\rs,k_1,\ldots,k_d} \bigotimes_{i\in\hat\cI} \bU^{(i)}_{k_i}.
\end{equation}
Here the tensor $\ba = (\ba_{\kk{k}})_{\kk{k}\in\N^{d+1}}$ of order $d+1$ is referred to as \emph{core tensor}, and $(r_\rs,r_1,\ldots,r_d)$ as the \emph{multilinear ranks} of $\bv$.

The \emph{hierarchical tensor format} \cite{Hackbusch:09-1}, on which the variant of our scheme described in this section is based, can be interpreted as a further decomposition of $\ba$ into tensors of order at most three.
This decomposition is obtained using further matricizations of the tensor according to a recursive decomposition of the set of modes $\hat\cI$ into a binary tree, which we denote by $\mathcal{D}$. {For simplicity, we focus in our exposition on linear trees corresponding to factorizations \eqref{htcore}, where 
\begin{equation}\label{lindimtree}
  \mathcal{D} = \bigl\{ \hat\cI, \{\rs\}, \{1,\ldots,d\}, \{1\}, \{2,\ldots, d\}, \ldots, \{d-1\}, \{d\} \bigr\}.
\end{equation}}
For each $\alpha \in \mathcal{D}$, the rank of the corresponding matricization $T^{(\alpha)}_\bv$ is denoted by $\rank_\alpha(\bv)$, its singular values by $(\sigma^{(\alpha)}_k)_{k\in\N}$. Here $\rank_{\hat\cI}(\bv) = 1$ for all $\bv \neq 0$, and we set
\begin{equation}
	\rank(\bv) := \bigl(\rank_\alpha(\bv)\bigr)_{\alpha\in\mathcal{D}\setminus\hat\cI}.
	\label{multirank}
\end{equation}

{In this section we specialize the generic template Algorithm \ref{alg:tensor_opeq_solve} to produce approximate solutions to \eqref{paramgeneral1}
of the form \eqref{general} with core tensor in hierarchical form as in \eqref{htcore}. More precisely, the output is given in the form of a tensor $\bu_\varepsilon$ of order $d+1$, supported on $\Lambda^\varepsilon_\rs\times \Lambda^\varepsilon_1 \times\cdots\times\Lambda^\varepsilon_d$ with finite $\Lambda^\varepsilon_i \subset \xidx_i$. This tensor is given (assuming $\mathcal{D}$ as in \eqref{lindimtree}) in the following form: let $r^\varepsilon_i := \rank_{\{i\}}(\bu_\varepsilon)$ for $i \in \hat \cI$ and $\tilde r^\varepsilon_i := \rank_{\{i+1,\ldots, d\}}(\bu_\varepsilon)$ for $i = 1,\ldots,d-1$, then
\begin{multline*}
\bu_{\varepsilon, \lambda,\nu_1,\ldots,\nu_d} = \sum_{k_\rs=1}^{r^\varepsilon_\rs} \cdots \sum_{k_d=1}^{r^\varepsilon_d} \sum_{\ell_1=1}^{\tilde r^\varepsilon_1}\cdots\sum_{\ell_{d-1}=1}^{\tilde r^\varepsilon_{d-1}} \bM^{(1),\varepsilon}_{k_\rs,\ell_1} \,\bM^{(2),\varepsilon}_{\ell_1,k_1,\ell_2}  \cdots   \bM^{(d-1),\varepsilon}_{\ell_{d-2},k_{d-2},\ell_{d-1}} \\ \times \bM^{(d),\varepsilon}_{\ell_{d-1},k_{d-1},k_d} \, \bU^{(\rs),\varepsilon}_{k_\rs, \lambda} \, \bU^{(1),\varepsilon}_{k_1, \nu_1}\cdots \bU^{(d),\varepsilon}_{k_d, \nu_d}, \quad \lambda\in \Lambda^\varepsilon_\rs,\; \nu_i \in \Lambda^\varepsilon_i.
\end{multline*}
The adaptive scheme identifies, in an intertwined fashion, the ranks $r^\varepsilon_i$, $\tilde r^\varepsilon_i$, the sets $\Lambda^\varepsilon_i$, as well as the coefficient tensors $\bM^{(i),\varepsilon}$ and $\bU^{(i),\varepsilon}$ of $\bu_\varepsilon$. The function represented by $\bu_\varepsilon$ has precisely the form \eqref{general}, where 
\[
  u^{\rs}_{k_\rs} = \sum_{\lambda \in \sidx} \bU^{(\rs),\varepsilon}_{k_\rs, \lambda} \psi_\lambda, 
   \quad u^{\rp,i}_{k_i} = \sum_{n \in \xidx_i} \bU^{(i),\varepsilon}_{k_i, n} L_n .
\]
}

The hierarchical format can offer substantially more favorable complexity characteristics for large $d$ than \eqref{tuckerformat}. 
The left singular vectors of the involved matricizations yield a \emph{hierarchical singular value decomposition} \cite{Grasedyck:10}.
We refer also to \cite{Falco:10,Kolda:09,Hackbusch:09-1,Grasedyck:13,Hackbusch:12} for detailed expositions 
regarding the finitely supported case (see also \cite{Oseledets:09,Oseledets:11} for the related \emph{tensor train} representation), and to \cite{BD} for  analogous results for tensors in sequence spaces, with notation analogous to the present paper. 

{For quantifying the approximability of tensors on $\bigtimes_{i \in \hat\cI}\xidx_i$ in terms of the best selection of finite $\Lambda^\varepsilon_i \subset \xidx_i$ as above,} a pivotal role is played by the quantitites
\beqn
\label{contr}
\pi^{(i)}(\bv) \coloneqq \bigl(\pi^{(i)}_{\nu_i}(\bv)\bigr)_{\nu_i\in \xidx_i},\quad \pi^{(i)}_{\mu}(\bv) \coloneqq \Bigl(\sum_{\substack{\nu\in\cF\\ \nu_i=\mu}}
|\bv_\nu|^2\Bigr)^{1/2} , \quad i \in \hat\cI = \{\rs, 1, \ldots, d\}.
\eeqn
They are introduced in \cite{BD} and called \emph{contractions} in analogy to the terminology in tensor analysis.
An efficient evaluation (without any $d$-dimensional summations) is possible due to the relation
\beqn
\label{sumN}
\pi^{(i)}_{\mu}(\bv)= \Bigl(\sum_{k}|\bU^{(i)}_{k,\mu}|^2|\sigma^{(i)}_k|^2\Bigr)^{1/2},
\eeqn
where $\sigma^{(i)}_k$ are the mode-$i$ singular values of $\bv$.
As in our previous notation, we abbreviate $\supp_i \bv:= \supp\bigl(\pi^{(i)}(\bv)\bigr)$, $i \in \hat\cI$.

\subsection{Adaptive scheme}

In the present case, we consider Algorithm \ref{alg:tensor_opeq_solve} with the routines $\recompress$ and $\coarsen$ for the hierarchical format as given in \cite[Rem.\ 15]{BD}. 

$\recompress$ is based on a truncation of a hierarchical singular value decomposition up to a prescribed accuracy $\eta>0$, which can be ensured based on the $\ell^2$-norm of omitted singular values of matricizations. We denote this operation by $\hatPsvd{\eta}$. As shown in \cite{Grasedyck:10}, it satisfies the quasi-optimality property 
\begin{equation}\label{recompress quasiopt}
	 \norm{ \bv - \hatPsvd{\eta}(\bv)} \leq \sqrt{2d-3} \inf \bigl\{ \norm{\bv - \bw} \colon {\rank(\bw)\leq \rank(\hatPsvd{\eta}(\bv))} \bigr\},
\end{equation}
with the inequality between ranks as defined in \eref{multirank} to be understood componentwise.

$\coarsen$ retains the degrees of freedom for each mode that correspond to the largest contractions \eqref{contr}.
Let $(\mu^*_{i,k})_{k \in \N}$ be such that $(\pi^{(i)}_{\mu^*_{i,k}}(\bv))_{k\in\N}$ is nonincreasing.
Denote for $\Lambda \subset \sidx\times\pidx$ by $\Restr{\Lambda}\bv$ the array obtained
by retaining all entries of $\bv$ corresponding to indices in $\Lambda$, while replacing all others by zero. Given $\eta >0$, we define the product set 
\[
\Lambda(\eta)= \bigtimes_{i \in \hat\cI}\{ \mu^*_{i,k} \colon k \leq N_i\},
\]
where $N_i$, $i\in \hat\cI$, are chosen to such that $\sum_{i \in \hat\cI} N_i$ is minimal subject to the condition
\beqn
\label{hatC}
\Big( \sum_{i \in \hat\cI} \sum_{k > N_i\vphantom{\hat\cI}}|\pi^{(i)}_{\mu^*_{i,k}}(\bv)|^2 \Big)^{1/2}
 \leq \eta.
\eeqn
Noting that the left side in \eqref{hatC} is an upper bound for $\norm{\bv - \Restr{\Lambda(\eta)}\bv}$, we define $\coarsen$ as a numerical realization of $ \hatCctr{\eta}\bv := \Restr{\Lambda(\eta)}\bv$, for which one has an analogous quasi-optimality property as in \eqref{recompress quasiopt} with constant $\sqrt{d}$.

Furthermore, $\bA$ as defined in \eqref{Arep2} is in the present case of finitely many parameters a \emph{finite} sum of Kronecker product operators, 
\[
  \bA = \sum_{j=0}^d \bA_j \otimes \bM_j,
\]
which considerably simplifies the construction of the corresponding routine $\apply$. The action of $\bA$ can thus increase each hierarchical rank of its argument at most by a factor of $d+1$.

\begin{remark}
In contrast to the case considered in \cite{BD2}, here the Hilbert space $\cH =V \otimes \spL{2}(\pdom)$ on which the problem is posed is endowed with a cross norm. As a consequence, the isomorphism that takes $v\in \cH$ to its coefficients $\bv\in \spl{2}(\sidx\times\pidx)$ with respect to the tensor product basis is of Kronecker rank one. The original low-rank structure \eqref{Arep} of $A(y)$ 
is therefore preserved in the $\spl{2}$-representation \eqref{Arep2} of the problem.
\end{remark}

Consequently, the routine $\apply$ that adaptively approximates the action of $\bA$ can be obtained following the generic construction given in \cite{BD}, provided that the operators $\bA_j$ and $\bM_j$ acting on each mode have the required compressibility properties.
Recall that by \eqref{threeterm_matrix}, the infinite matrices $\bM_j$ are bidiagonal, and hence do not require any further approximation. 
To use the construction of \cite{BD}, we thus only need that the operators $\bA_0, \ldots, \bA_d$ acting on the spatial variables are $s^*$-compressible.

\subsection{Convergence analysis}

Our complexity results aim at the following type of statements: given a certain approximability of the solution, the algorithm recovers the corresponding convergence rates without their explicit knowledge.

To describe these approximability properties, we now recall the definition of approximation classes to quantify the convergence of hierarchical low-rank approximations from \cite{BD}, in terms of the hierarchical rank defined by \eref{multirank}.
Let $\gamma = \bigl(\gamma(n)\bigr)_{n\in \N_0}$ be positive and strictly
increasing with $\gamma(0)=1$
and $\gamma(n)\to\infty$ as $n\to\infty$, for $\bv \in \spl{2}(\sidx\times\pidx)$ let
\[
  \abs{\bv}_{\AH{\gamma}} \coloneqq \sup_{r\in\N_0} \gamma({r})\,\inf_{\abs{\rank(\bw)}_\infty\leq r} \norm{\bv - \bw} ,
\]
where $\inf \{ \norm{\bv - \bw}\colon \abs{\rank(\bw)}_\infty\leq r\}$ -- that is, the errors of approximation by hierarchical ranks at most $r$ -- can be bounded in terms of the hierarchical singular values $(\sigma^{(\alpha)}_k)_{k\in\N}$ for $\alpha\in\cD$.
We introduce the approximation classes
\[
\AH{\gamma} \coloneqq  \bigl\{\bv\in \spl{2}(\sidx\times\pidx) : 
 \abs{\bv}_{\AH{\gamma}} <\infty \bigr\} , \quad 
 \norm{\bv}_{\AH{\gamma}} \coloneqq \norm{\bv} + \abs{\bv}_{\AH{\gamma}}. 
\]
We restrict our considerations to $\gamma$ that satisfy
\begin{equation*}
\rho_\gamma \coloneqq \sup_{n\in\N} \ga(n)/\ga(n-1)<\infty \,,
\end{equation*}
which corresponds to a restriction to at most exponential growth; our considerations would still apply for faster growth, but lead to less sharp results.

For an approximation $\bv$ of bounded support to $\bu$, the number of nonzero coefficients $\#\supp_i \bv$ required in each tensor mode to achieve a certain accuracy depends on the best $n$-term approximability of the sequences $\pi^{(i)}(\bu)$.

 This approximability by sparse sequences is quantified by the classical {\em approximation classes} $\As = \As(\mathcal{J})$, where $s>0$ and $\mathcal{J}$ is a countable index set, comprised of all $\bw\in \ell_2(\mathcal{J})$
for which the quasi-norm
\begin{equation}
\label{Asdef}
  \norm{\mathbf{\bw}}_{\As(\mathcal{J})} := \sup_{N\in\N_0} (N+1)^s
  \inf_{\substack{\Lambda\subset \mathcal{J}\\
  \#\Lambda\leq N}} \norm{\bw - \Restr{\Lambda} \bw}
\end{equation}
is finite. 

 In particular, if $\pi^{(i)}(\bu) \in \As(\mathcal{G}_i)$,
then these sequences can be approximated within accuracy $\eta$ by finitely supported sequences with ${\cal O}(\eta^{-1/s})$ nonzero entries. In what follows, we do not explicitly specify the index set in the spaces $\As$ when this is clear from the context.

We analyze the complexity of the algorithm under the following benchmark assumptions, see the discussion in \S\ref{sec:approx iso}.

\begin{assumptions}
\label{ass:fullbenchmark}
For the hierarchical tensor approximation in the case \eqref{cId} of $d$ parametric variables, we assume the following:
\begin{enumerate}[{\rm(i)}]
\item $\pi^{(i)}(\bu), \pi^{(i)}(\bbf) \in \Acal^{s}(\Gcal_i)$, $i\in\hat\cI$, for an $s>0$.
\item $\bu, \bbf \in \Acal_\Hcal(\gamma)$, where $\gamma(n) \coloneqq e^{\bar c n^{1/\bar b}}$ with $\bar b\geq 1$, $\bar c>0$. 
\item The $\bA_j$, $j \in \hat\cI$, are $s^*$-compressible for an $s^*>s$, and hence there exist matrices $\bA_{j,n}$ with $\alpha_{j,n} 2^n$ entries per row and column and such that 
$\norm{\bA_j - \bA_{j,n}} \leq \beta_{j,n} 2^{-s n}$,
and where the sequences $\ab_j = (\alpha_{j,n})_{n\in\N_0}$ and $\bb_j = (\beta_{j,n})_{n\in\N_0}$ are summable.
\end{enumerate}	
\end{assumptions}

We will use the above assumptions as a reference point for the scaling with respect to $\varepsilon$ of the computational complexity.
Note that Assumptions \ref{ass:fullbenchmark}(i),(ii) mean in particular that best $N$-term approximations of $\pi^{(i)}(\bu)$ converge at least as $\Ocal (N^{-s})$ for $i\in\hat\cI$, and low-rank approximations $\bu_n$ of $\bu$ with $\abs{\rank(\bu_n)}_\infty \leq n$ converge as $\Ocal(e^{-\bar c n^{1/\bar b}})$. Assumption \ref{ass:fullbenchmark}(iii) needs to be established for each given problem.

Under these conditions, it is possible to realize a routine $\rhs$ that satisfies the following conditions: for sufficiently small $\eta>0$ and $\bbf_\eta:=\rhs(\eta)$,
\begin{equation}\label{rhsass1}
\begin{gathered}
	\sum_{i\in \hat\cI} \#\supp_i (\bbf_\eta) \lesssim d \eta^{-\frac1s} \Bigl( \sum_{i\in \hat\cI} \norm{\pi^{(i)}(\bbf)}_{\cA^s} \Bigr)^{\frac1s},
	 \quad \sum_{i\in \hat\cI} \norm{\pi^{(i)}(\bbf_\eta)}_{\cA^s} \lesssim d^{1+\max\{1,s\}} \sum_{i\in \hat\cI} \norm{\pi^{(i)}(\bbf)}_{\cA^s},
	 \\ \abs{\rank(\bbf_\eta)}_\infty \lesssim  \bigl(d^{-1} \ln (\norm{\bbf}_{\Acal_\Hcal(\gamma)}/\eta)  \bigr)^b,
\end{gathered}
\end{equation}
with hidden constants that do not depend on $d$. Such approximations can be obtained, for instance, by combining $\coarsen$ and $\recompress$ with parameters adjusted as in \cite[Thm.\ 7]{BD}. 
 Assuming full knowledge of $\bbf$, $\rhs$ can be realized such that the required number of operations is bounded,  with $C>0$ independent of $d$, by
\begin{equation}\label{rhsass2}
  	C\Bigl( d \abs{\rank(\bbf_\eta)}_\infty^3 + \abs{\rank(\bbf_\eta)}_\infty \sum_{i \in \hat\cI} \#\supp_i(\bbf_\eta)  \Bigr),
\end{equation}
and we shall make this idealized assumption in the subsequent analysis. Note that these requirements greatly simplify when the corresponding right hand side $f$ is independent of the parametric variable.

In order to also compare different parametric dimensionalities $d$ in the complexity bounds, we additionally need a specific reference family of $d$-dependent problems. We introduce the following model assumptions, which we shall also consider in more detail for a concrete class of problems in \S\ref{sec:approximability}.

\begin{assumptions}
\label{ass:fullbenchmark dim}
For the quantities in Assumptions \ref{ass:fullbenchmark}, in addition let the following hold:
\begin{enumerate}[{\rm(i)}]
	\item There exist $C_1, c_1>0$ independent of $d$ such that
	\begin{multline*}
	  \max\bigl\{\norm{\pi^{(\rs)}(\bu)}_{\Acal^s}, \norm{\pi^{(1)}(\bu)}_{\Acal^s}, \ldots,\norm{\pi^{(x)}(\bbf)}_{\Acal^s}, \norm{\pi^{(1)}(\bbf)}_{\Acal^s},\ldots,\\ \bar c^{-1} ,\, \norm{\bu}_{\AH{\gamma}}, \norm{\bbf}_{\AH{\gamma}}\bigr\}\leq C_1 d^{c_1} .
	  \end{multline*}
	\item There exists $C_2>0$ independent of $d$ such that
    \[ \max\bigl\{ \bar b,\, \norm{\bA_\rs}, \norm{\bA_1}, \ldots,\norm{\boldsymbol{\alpha}_\rs}_{\ell^1},  \norm{\boldsymbol{\alpha}_1}_{\ell^1}, \ldots, \norm{\boldsymbol{\beta}_\rs}_{\ell^1}, \norm{\boldsymbol{\beta}_1}_{\ell^1}, \ldots\} \leq C_2.
    \]
    \end{enumerate}
\end{assumptions}

It needs to be emphasized that Algorithm \ref{alg:tensor_opeq_solve} does not require any knowledge on the approximability of $\bu$ stated in Assumptions \ref{ass:fullbenchmark} and \ref{ass:fullbenchmark dim}; these merely describe a model case for complexity bounds and their dependence on $d$.
Recall from Remark \ref{rem:termination} that Algorithm \ref{alg:tensor_opeq_solve} always produces $\bu_\varepsilon$ satisfying $\norm{\bu-\bu_\varepsilon}\leq \varepsilon$ in finitely many steps.

\begin{theorem}\label{fullsepresult}
Let Assumptions \ref{ass:fullbenchmark} hold, let $\alpha>0$ and let $\kappa_1,\kappa_2,\kappa_3$ in Algorithm \ref{alg:tensor_opeq_solve} be chosen as
\begin{gather*}
	\kappa_1 = \bigl( 1 + (1 +\alpha)(\sqrt{d} + \sqrt{2d-3} + \sqrt{d(2d-3)})  \bigr)^{-1}, \\
	 \kappa_2 = \sqrt{2d-3}(1+\alpha) \kappa_1,\quad \kappa_3 = \sqrt{d}(1+\sqrt{2d-3})(1+\alpha) \kappa_1.
\end{gather*}	
Then for each $\varepsilon>0$ with $\varepsilon < \varepsilon_0$, the approximation $\bu_\varepsilon$ produced by Algorithm \ref{alg:tensor_opeq_solve} satisfies
\begin{equation}\label{htrankbound}
	 \abs{\rank(\bu_\varepsilon)}_\infty 
   \leq \, \bigl( \bar c^{-1}
   \ln\bigl[ 2(\alpha \kappa_1)^{-1} \rho_{\gamma}
   \,\norm{\bu}_{\AH{\gamma}}\,\varepsilon^{-1}\bigr] \bigr)^{ \bar b}
    \lesssim  \biggl(\frac{\abs{\ln \varepsilon} + \ln d}{\bar c}\biggr)^{\bar b},
\end{equation}
as well as
\begin{equation}\label{htsuppbound}
	\quad \sum_{i\in\hat\cI} \#\supp_i(\bu_\varepsilon) \lesssim d^{1+\frac1s}\Bigl( \sum_{i\in\hat\cI} \norm{\pi^{(i)}(\bu)}_{\cA^s} \Bigr)^{\frac1s} \varepsilon^{-\frac1s}.
\end{equation}
Let in addition Assumptions \ref{ass:fullbenchmark dim} hold and let $\rhs$ satisfy \eqref{rhsass1}, \eqref{rhsass2}, then there exist $c,C>0$ such that the number of required operations is bounded by
\begin{equation}\label{htcomplexity}
	C d^{c\ln d} \abs{\ln \varepsilon}^{2\bar b} \varepsilon^{-\frac1s},
\end{equation}
where $c$ and $C$ depend on $\alpha, \rho,\omega, s$, and on the constants $C_1,c_1,C_2$ in Assumptions \ref{ass:fullbenchmark dim}.
\end{theorem}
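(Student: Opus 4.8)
The plan is to follow the convergence analysis of the generic scheme in \cite{BD}, while tracking throughout the explicit dependence on $d$ that enters through the quasi-optimality constants $\sqrt{2d-3}$ and $\sqrt{d}$ of $\recompress=\hatPsvd{\cdot}$ and $\coarsen=\hatCctr{\cdot}$, through the Kronecker rank $d+1$ of $\bA$, and through the constants of Assumptions \ref{ass:fullbenchmark}, \ref{ass:fullbenchmark dim}. Convergence, $\norm{\bu-\bu_\varepsilon}\le\varepsilon$ in finitely many steps, is already supplied by Remark \ref{rem:termination}; the substance is the quantitative control of ranks, supports and operation count. The structural facts I would exploit are: the output is produced at the terminating outer index $K$ by $\coarsen(\recompress(\bw_j;\frac{1}{2^{K}}\kappa_2\delta);\frac{1}{2^{K}}\kappa_3\delta)$ (line \ref{alg:cddtwo_coarsen_line}); the final $\coarsen=\Restr{\Lambda(\eta)}$ acts as a Kronecker product of mode-wise coordinate projections and therefore cannot increase any matricization rank, so $\abs{\rank(\bu_\varepsilon)}_\infty\le\abs{\rank(\recompress(\bw_j;\frac1{2^K}\kappa_2\delta))}_\infty$; the inner-loop termination criterion on line \ref{alg:cddtwo_looptermination_line} guarantees $\norm{\bw_j-\bu}\lesssim\kappa_1\frac1{2^{k+1}}\delta$ via $\lambda_{\bA}$; and at termination $\frac1{2^{K}}\delta\ge\frac12\varepsilon$.

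For the rank bound \eqref{htrankbound} I would combine a perturbation estimate with \eqref{recompress quasiopt}. Writing $\epsilon_r(\bv):=\inf_{\abs{\rank(\bw)}_\infty\le r}\norm{\bv-\bw}$, the triangle inequality gives $\epsilon_r(\bw_j)\le\epsilon_r(\bu)+\norm{\bw_j-\bu}\le|\bu|_{\AH{\gamma}}/\gamma(r)+\norm{\bw_j-\bu}$, where the first term uses $\bu\in\AH{\gamma}$. The HSVD truncation to tolerance $\eta=\frac1{2^{K}}\kappa_2\delta$ uses ranks no larger than the smallest $r$ with $\sqrt{2d-3}\,\epsilon_r(\bw_j)\le\eta$. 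Here the choice $\kappa_2=\sqrt{2d-3}(1+\alpha)\kappa_1$ is exactly calibrated: the perturbation contributes $\sqrt{2d-3}\,\norm{\bw_j-\bu}\lesssim\sqrt{2d-3}\,\kappa_1\frac1{2^{K}}\delta=\eta/(1+\alpha)$, leaving room $\frac{\alpha}{1+\alpha}\eta$ for the regularity term, so that the factor $\sqrt{2d-3}$ cancels and $\gamma(r)\gtrsim(\alpha\kappa_1\frac1{2^{K}}\delta)^{-1}|\bu|_{\AH{\gamma}}$. Inverting $\gamma(n)=e^{\bar c n^{1/\bar b}}$, using $\frac1{2^K}\delta\ge\frac12\varepsilon$ and absorbing the grid effect into $\rho_\gamma$ yields the stated logarithm with argument $2(\alpha\kappa_1)^{-1}\rho_\gamma\norm{\bu}_{\AH{\gamma}}\varepsilon^{-1}$; the simplification $\lesssim\bigl(\frac{\abs{\ln\varepsilon}+\ln d}{\bar c}\bigr)^{\bar b}$ then follows from $\kappa_1^{-1}\lesssim d$, boundedness of $\rho_\gamma$, and $\norm{\bu}_{\AH{\gamma}}\lesssim d^{c_1}$ in Assumptions \ref{ass:fullbenchmark dim}.

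The support bound \eqref{htsuppbound} is the analogue for $\coarsen$. The contractions are $1$-Lipschitz, $\norm{\pi^{(i)}(\bw_j)-\pi^{(i)}(\bu)}\le\norm{\bw_j-\bu}$, so $\pi^{(i)}(\bw_j)$ inherits the $\Acal^s$-approximability of $\pi^{(i)}(\bu)$; retaining the largest contractions and distributing the tolerance $\frac1{2^{K}}\kappa_3\delta$ over the $d+1$ modes as in the definition of $\Lambda(\eta)$ gives $\#\supp_i(\bu_\varepsilon)\lesssim(\norm{\pi^{(i)}(\bu)}_{\Acal^s}/\eta_i)^{1/s}$, where the $\sqrt{d}$ in $\kappa_3=\sqrt{d}(1+\sqrt{2d-3})(1+\alpha)\kappa_1$ again cancels the quasi-optimality factor. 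Minimizing $\sum_i N_i$ subject to $\sum_i\eta_i^2\le\eta^2$ produces the aggregate factor $d^{1+1/s}$ together with $\bigl(\sum_i\norm{\pi^{(i)}(\bu)}_{\Acal^s}\bigr)^{1/s}$, and since $\kappa_3\sim1$ the tolerance is $\gtrsim\varepsilon$, giving the $\varepsilon^{-1/s}$ scaling.

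Finally, the complexity \eqref{htcomplexity} is assembled by summing per-step costs. There are $K\lesssim\abs{\ln\varepsilon}+\ln d$ outer steps (geometric decay of $2^{-k}\delta$ with $\delta\lesssim d^{c_1}$) and, since $\kappa_1\sim d^{-1}$ in line \ref{alg:jchoice}, $J\lesssim\ln d$ inner steps. Each $\apply$ raises ranks by the factor $d+1$ and is followed by an $\hatPsvd{}$-recompression; the dominant $\varepsilon$-carrying work comes from the singular value decompositions of the mode frames of size $\#\supp_i\times\abs{\rank}_\infty$, costing $\Ocal(\#\supp_i\,\abs{\rank}_\infty^2)$ each, together with the $\Ocal(d\,\abs{\rank}_\infty^3)$ core operations as in \eqref{rhsass2}. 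Summing the supports geometrically over $k$ collapses to $\varepsilon^{-1/s}$, the squared rank contributes $\abs{\rank(\bu_\varepsilon)}_\infty^2\lesssim\abs{\ln\varepsilon}^{2\bar b}$ by \eqref{htrankbound}, and all remaining factors — the polynomial powers of $d$ from $\bar c^{-1}\lesssim d^{c_1}$, the Kronecker rank, the constants of Assumptions \ref{ass:fullbenchmark dim}, and the $\ln d$-type contributions from $J$ and from the pure-$d$ part of $(\abs{\ln\varepsilon}+\ln d)^{2\bar b}$ — are absorbed into the single closed-form superpolynomial factor $d^{c\ln d}$. I expect the main obstacle to be precisely this coordination of the three tolerances against the two quasi-optimality constants: one must verify that the inner loop drives $\norm{\bw_j-\bu}$ below $\sim\kappa_1\varepsilon\sim d^{-1}\varepsilon$, which is exactly the accuracy that lets $\recompress$ at tolerance $\kappa_2\varepsilon\sim\sqrt{2d-3}\,\kappa_1\varepsilon$ and $\coarsen$ at tolerance $\kappa_3\varepsilon$ act near-optimally on $\bw_j$ as if on $\bu$, while keeping the per-step error reduction — and hence $K$ — independent of $d$. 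Showing that the interleaved rank-reducing and support-reducing steps, which act on different matricizations, do not degrade each other's near-optimality is the delicate quantitative point.
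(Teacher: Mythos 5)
Your proposal is correct and follows essentially the same route as the paper: the paper's own proof simply invokes \cite[Thms.\ 7--9]{BD} (applied to the output of line \ref{alg:cddtwo_coarsen_line}, with $R_i = d$ and uniform constants) and your argument is a faithful unfolding of exactly that machinery, including the key quantitative points -- the calibration of $\kappa_2,\kappa_3$ against the quasi-optimality constants $\sqrt{2d-3}$ and $\sqrt{d}$, the inner-loop accuracy $\norm{\bw_j-\bu}\lesssim\kappa_1 2^{-(k+1)}\delta$, $J\lesssim\ln d$ from $\kappa_1\sim d^{-1}$, the rank-growth factor $d+1$ per $\apply$, and the compounding of these over the inner loop into the superpolynomial factor $d^{c\ln d}$. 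No gaps.
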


\begin{proof}

The validity of \eqref{htrankbound} and \eqref{htsuppbound} follows by \cite[Thm.\ 7]{BD}, which can be immediately applied to the result of line \ref{alg:cddtwo_coarsen_line} in Algorithm \ref{alg:tensor_opeq_solve}. Concerning \eqref{htcomplexity}, we can apply \cite[Thm.\ 8]{BD} (with $R_i = d$ and uniform constants $\hat C^{(i)}_{\hat \alpha}$, $\hat C^{(i)}_{\hat \beta}$ and $\hat C^{(i)}_{\tilde\bA}$ in the notation used there) to obtain, for $\bw_\eta\coloneqq \apply(\bv;\eta)$,
\[
   \#\supp_i (\bw_\eta) \lesssim d^{1+s^{-1}} \eta^{-\frac1s} \Bigl(\sum_{j \in \Ical_\rs} \norm{\pi^{(i)}(\bv)}_{\Acal^s} \Bigr)^{\frac1s} , \quad
    \norm{\pi^{(i)}(\bw_\eta)}_{\Acal^s} \lesssim d^{1+s} \norm{\pi^{(i)}(\bv)}_{\Acal^s},
\]
as well as $\rank(\bw_\eta)\leq (d+1) \rank(\bv)$.
With these estimates, \eqref{htcomplexity} follows exactly as in \cite[Thm.\ 9]{BD}.
\end{proof}

For each fixed $d$, the produced solution $\bu_\varepsilon$ thus requires a number of parameters that is proportional to the optimal one for this type of approximation. Taking into account that in the operation count, the best approximation ranks of order $\Ocal(\abs{\ln \varepsilon}^{\bar b})$ enter at least quadratically due to orthogonalizations required in the algorithm, the number of operations in \eqref{htcomplexity} also scales optimally with respect to $\varepsilon$. The dependence of the constant on the parametric dimension is subexponential, since $d^{c\ln d} = e^{c (\ln d)^2}$.

\section{Spatial-parametric sparse approximation}\label{sec:sparselegendre}

We now turn to the case $\cI=\N$, that is, problems involving countably many parameters $(y_j)_{j\geq 1}$ that have decreasing influence as $j$ increases.
Here we consider problems of the type \eqref{paramdiffusion-0}, 
\begin{equation}
\label{thetaj}
	 a(y) = \bar a + \sum_{j = 1}^\infty y_j \pf_j ,
\end{equation}
under the uniform ellipticity assumption \eqref{uea} on $a$.
This variant of Algorithm \ref{alg:tensor_opeq_solve} is  similar to the scheme proposed in  \cite{Gittelson:14}, following the approach of \cite{Cohen:01,Cohen:02}.

In this section we consider a version of Algorithm \ref{alg:tensor_opeq_solve} that produces $n$-term approximations to $u\in L^2(\pdom,V)$ in terms of the wavelet-Legendre tensor product basis $\{\psi_\lambda\otimes L_\nu\}_{\lambda\in\sidx,\nu\in\pidx}$. That is, the approximation that we seek in this case is of the form \eqref{fullnterm}, that is,
\begin{equation}\label{fullsparse}
	 u \approx u_n = \sum_{(\lambda,\nu)\in \Lambda_n} \bu_{\lambda \nu} \psi_\lambda\otimes L_\nu,
\end{equation}
where we aim to identify $\Lambda_n$ which yields an error close to that of the best $n$-term approximation in this basis.

Here, $\coarsen$ performs a standard coarsening operation on a sequence, and we set $\recompress(\bv;\eta) \coloneqq \bv$ for any $\eta$.
The scheme thus reduces to the adaptive method of \cite{Cohen:02}, which has been considered for this particular type of approximation of parametric PDEs also in \cite{Gittelson:14}.
The key ingredient that remains to be described is the adaptive application of $\bA$ to representations of the form \eqref{fullsparse} based on its $s^*$-compressibility.

Let $\bu \in \Acal^s(\sidx\times\pidx)$ and let $\bA$ be $s^*$-compressible with $s^*>s$ according to \eqref{compress}. Then it follows by \cite[Prop.\ 3.8]{Cohen:01} that $\bbf \in \Acal^s$, hence we can construct $\rhs$ satisfying
\[
  \#\supp (\rhs(\eta)) \lesssim \eta^{-\frac1s} \norm{\bbf}_{\Acal^s}^{\frac1s},\quad \norm{\rhs(\eta)}_{\Acal^s} \lesssim \norm{\bbf}_{\Acal^s}.
\]
Moreover, by the standard construction of $\apply$ in \cite[Cor.\ 3.10]{Cohen:01} based on the $s^*$-compressibility of $\bA$, the results in \cite{Cohen:02} yield the following complexity bound for the present realization of Algorithm \ref{alg:tensor_opeq_solve}.

\begin{theorem}\label{cddresult}
Let $\bu\in\Acal^s$ and let $\bA$ be $s^*$-compressible with $0<s<s^*$.
	Then for any given $\varepsilon>0$, the approximation $\bu_\varepsilon$ produced by the above variant of Algorithm \ref{alg:tensor_opeq_solve} operating on approximations of the form \eqref{fullsparse} satisfies 
		\[
	   \# \supp (\bu_\varepsilon ) \lesssim \varepsilon^{-\frac1s} \norm{\bu}_{\Acal^s}^{\frac 1 s}, \quad 
	      \norm{\bu_\varepsilon}_{\Acal^s} \lesssim \norm{\bu}_{\Acal^s},
	\] 
	and the number of operations is bounded up to a multiplicative constant by 
	$	   1 + \varepsilon^{-\frac1s} \norm{\bu}_{\Acal^s}^{\frac 1 s}	   $.
\end{theorem}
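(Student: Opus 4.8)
The plan is to verify that, in the present specialization, Algorithm~\ref{alg:tensor_opeq_solve} coincides with the adaptive wavelet solver of \cite{Cohen:02}, and then to check that the structural hypotheses of its optimality analysis are met here, so that the complexity bounds of \cite{Cohen:02} apply. Since $\recompress$ is the identity and $\coarsen$ is the standard sequence coarsening, the only problem-specific ingredients are $\apply$ and $\rhs$; the whole argument reduces to establishing their $\Acal^s$-stability together with the geometric error reduction of the inexact Richardson iteration driving the outer loop.

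First I would assemble the building blocks. By \eqref{normequiv2} the matrix $\bA$ is boundedly invertible on $\spl{2}(\sidx\times\pidx)$, so $\bbf=\bA\bu$, and, because $\bu\in\Acal^s$ with $\bA$ being $s^*$-compressible for some $s^*>s$, \cite[Prop.~3.8]{Cohen:01} gives $\bbf\in\Acal^s$ with $\norm{\bbf}_{\Acal^s}\lesssim\norm{\bu}_{\Acal^s}$. This justifies the stated $\rhs$ satisfying \eqref{applyaccuracy} together with $\#\supp(\rhs(\eta))\lesssim\eta^{-1/s}\norm{\bbf}_{\Acal^s}^{1/s}$ and $\norm{\rhs(\eta)}_{\Acal^s}\lesssim\norm{\bbf}_{\Acal^s}$. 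Next I would record the two properties of the standard $\apply$ from \cite[Cor.~3.10]{Cohen:01}: beyond the accuracy requirement \eqref{applyaccuracy}, for finitely supported $\bv$ it is $\Acal^s$-stable, with $\#\supp(\apply(\bv;\eta))\lesssim\eta^{-1/s}\norm{\bv}_{\Acal^s}^{1/s}$ and $\norm{\apply(\bv;\eta)}_{\Acal^s}\lesssim\norm{\bv}_{\Acal^s}$, and its cost controlled by $\eta^{-1/s}\norm{\bv}_{\Acal^s}^{1/s}+\#\supp\bv+1$.

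The main argument then combines these with the quasi-optimality of $\coarsen$: using the definition \eqref{Asdef}, if an iterate $\bw$ satisfies $\norm{\bw-\bu}\le\eta$, then coarsening $\bw$ with tolerance proportional to $\eta$ produces an approximation of support $\lesssim\eta^{-1/s}\norm{\bu}_{\Acal^s}^{1/s}$ and $\Acal^s$-norm $\lesssim\norm{\bu}_{\Acal^s}$, while keeping the error $\lesssim\eta$. The outer loop halves the error bound $2^{-k}\delta$ at each pass, with $\delta$ bounding the initial error $\norm{\bu}$ via \eqref{normequiv2}, while the inner loop terminates after at most $J$ steps (line~\ref{alg:jchoice}) with a residual small enough to guarantee this halving, by the contraction $\norm{\id-\omega\bA}\le\rho<1$. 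By Remark~\ref{rem:termination} the scheme stops after $\lesssim|\log_2(\delta/\varepsilon)|$ outer steps with $\norm{\bu-\bu_\varepsilon}\le\varepsilon$. The support and $\Acal^s$-norm bounds for $\bu_\varepsilon$ then follow from the final coarsening step, and the total operation count is obtained by summing the per-step costs, which are dominated by $\apply$ acting on vectors whose supports are controlled by the preceding coarsening; since the tolerances decay geometrically like $2^{-k}\delta$, the resulting sum of the $(2^{-k}\delta)^{-1/s}$ is dominated by its last term, giving $1+\varepsilon^{-1/s}\norm{\bu}_{\Acal^s}^{1/s}$.

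The crux, and the part I expect to require the most care, is maintaining $\Acal^s$-stability uniformly across iterations: if the $\Acal^s$-norms of the iterates were allowed to grow from pass to pass, the support bounds would degrade and the geometric summation would fail. Controlling this is precisely the role of pairing the $\Acal^s$-stable $\apply$ and $\rhs$ with the quasi-optimal $\coarsen$, and it is the heart of the analysis in \cite{Cohen:02}; here it transfers directly once the $s^*$-compressibility of $\bA$ and the membership $\bbf\in\Acal^s$ have been secured as above.
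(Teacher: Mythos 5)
Your proposal is correct and follows essentially the same route as the paper's own proof: establishing $\bbf\in\Acal^s$ via \cite[Prop.~3.8]{Cohen:01}, invoking the $\Acal^s$-stable $\apply$ of \cite[Cor.~3.10]{Cohen:01} based on the $s^*$-compressibility of $\bA$, and then transferring the optimality analysis of \cite{Cohen:02}. The only difference is that you spell out the mechanism of that analysis (coarsening-based $\Acal^s$-stability across outer iterations and the geometric summation of per-step costs), which the paper simply cites.
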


We next consider the compressibility of $\bA$, which determines the range of $s$ for which Theorem \ref{cddresult} yields optimality, and a corresponding procedure $\apply$. In \S\ref{sec:approx aniso}, we consider in further detail for which values of $s$ one can indeed expect $\bu \in \Acal^s$.

\subsection{Adaptive operator application}\label{ssec:adop}

Any numerical scheme $\apply$ necessarily involves a truncation of the series \eqref{thetaj}. 
Defining for each nonnegative integer $M$  the corresponding truncation error
\beqn
\label{eM}
    e_{M} := \Bignorm{ \sum_{j > M } \bA_j \otimes \bM_j }
\eeqn 
of replacing $\bA$ by $\sum_{j=1}^M \bA_j \otimes \bM_j$, where $e_0 = \norm{\bA}$, the decay of $e_M$ describes the approximability
of $\bA$. We will be concerned with algebraic rates
\be
\label{S}
e_M\le C M^{-S}, \quad M\in \N,
\ee
where $C, S>0$ are fixed constants. Note that in particular, our further developments do not require summability of $(\norm{\pf_j}_{L^\infty})_{j\geq 1}$ as assumed, e.g., in \cite{EGSZ:14,EPS:15}.

{A first limitation to the $s^*$-compressibility of $\bA$ lies in the decay of the truncation errors \eqref{S}, which arise from replacing $\bA$ by a finite sum.
This amounts to approximating all but finitely many $\bA_j$ by zero.
A second limitation is the compressibility of
the remaining $\bA_j$, which depends on the particular expansion system $(\pf_j)_{j\in\N}$. 
As we show next, a favorable $s^*$-compressibility result for $\bA$ (almost matching the truncation error decay \eqref{S}) can be obtained when $(\pf_j)_{j\in\N}$ have multiscale structure, as summarized in the following set of assumptions.
In \S\ref{sec:aniso}, our findings are compared to previous results that hold under more generic assumptions.
}

\begin{assumptions}\label{ass:multiscale}
Let $\{\xi_\mu\}_{\mu \in \Lambda}$ be a system of compactly supported multilevel basis functions with $\operatorname{diam}({\rm supp}(\xi_\mu))\sim 2^{-|\mu|}$ and $\norm{\xi_\mu}_{L^\infty(D)} = 1$. With $(\mu_j)_{j\geq 1}$ an enumeration of $\Lambda$ by increasing level and some fixed $\alpha>0$, let
\beqn
\label{multiscaleexpansion}
\pf_j = c_{\mu_j} \xi_{\mu_j}, \quad \text{ where $c_{\mu_j} = 2^{-\alpha \abs{\mu_j}}$.}
\eeqn
To simplify notation, let $c_{\mu_0} \coloneqq 1$, $\xi_{\mu_0} \coloneqq \bar a$, and $\abs{\mu_0} \coloneqq  0$.
\end{assumptions}

Note that for what follows, it would in fact suffice to assume $c_{\mu} \sim 2^{-\alpha \abs{\mu}}$, with a constant that is uniform over $\Lambda$, but we assume equality to simplify the exposition.
Under Assumptions \ref{ass:multiscale}, 
\begin{equation}
\label{multiscale eMbound}
  e_M \leq \sup_{y\in Y} \Bignorm{\sum_{j>M} y_j \theta_j }_{L^\infty(D)} 
   \leq \sum_{\ell \geq \max\{ \abs{\mu_j} \colon j \leq M\}} 2^{-\alpha\ell} 
    \lesssim 2^{-\alpha \max\{ \abs{\mu_j} \colon j \leq M\}} \lesssim M^{-\frac{\alpha}{m}},
\end{equation}
and we thus obtain \eqref{S} with $S = \alpha/m$.

We now give a new result for the compressibility of $\bA$ arising from a wavelet-type parametrization as in \eref{multiscaleexpansion}. 
As we shall see, making use of a multilevel structure in the parametrization, one can obtain substantially better compressibility of $\bA$ than under the more generic assumptions used in \cite{Gittelson:14}.

The result is based on compressibility properties of the corresponding matrices $\bA_j$. These are analyzed in Appendix \ref{sec:opapprox}, where the assumptions of the following Proposition are established under conditions on the regularity of $\xi_\mu$ and of the spatial wavelets.

\begin{proposition}\label{prop:sparsecompr}
Let $\{ \pf_j \}_{j\geq 1}$ and $c_{\mu_j}$ be as in Assumptions \ref{ass:multiscale}, and assume that 
there exist a $\tau > \frac{\alpha}m$ and matrices  
$\bA_{j,n}$, $n\in \N_0$, {where $\bA_{j,0} = 0$,} with the following properties: 
\begin{enumerate}[{\rm(i)}]
\item
One has
\beqn
\label{compress2}
\|\bA_j - \bA_{j,n}\|\lesssim c_{\mu_j}  2^{-\tau n},\quad n\in\N_0,
\eeqn
where the hidden constant is independent of $j,n$.
\item  The number of nonvanishing entries in each column of $\bA_{n,j}$ does not exceed
a uniform constant multiple of $\bigl(1+|\mu_j|^q\bigr)2^n$, for some $q\geq 1$.
\end{enumerate}
Then $\bA$ is $s^*$-compressible with
\begin{equation}\label{sparsecomprlimit}
s^* = \frac{\alpha}{m} \frac{2\tau}{1+2\tau}.
\end{equation}
\end{proposition}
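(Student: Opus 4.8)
The plan is to construct the compressed approximants $\bA_n$ of $\bA = \sum_{j\ge 0}\bA_j\otimes\bM_j$ by combining a truncation of the expansion with a \emph{level-dependent} compression of the retained factors. I would group the indices $j$ by the level $\ell = \abs{\mu_j}$ of the associated coefficient $\xi_{\mu_j}$: the multiscale structure (diameter $\sim 2^{-\ell}$ in $\R^m$) gives $\sim 2^{m\ell}$ indices at level $\ell$, each carrying $c_{\mu_j} = 2^{-\alpha\ell}$. Fixing a truncation level $L = L(n)$ and depths $p_\ell = p_\ell(n)$ depending only on the level, I would set $\bA_n := \sum_{\abs{\mu_j}\le L} \bA_{j,p_{\abs{\mu_j}}}\otimes\bM_j$, discarding all terms with $\abs{\mu_j}>L$ as well as those with $p_\ell = 0$ (since $\bA_{j,0}=0$). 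Here I use $\bM_0 = \id$ and the bidiagonality from \eqref{threeterm_matrix}, so that $\norm{\bM_j}\le 1$ and each $\bM_j$ has at most two nonzeros per row and column.

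For the error I would split $\bA - \bA_n$ into the discarded tail $\sum_{\abs{\mu_j}>L}\bA_j\otimes\bM_j$ and the compression defect $\sum_{\abs{\mu_j}\le L}(\bA_j - \bA_{j,p_{\abs{\mu_j}}})\otimes\bM_j$. The tail is bounded directly by $e_M$ with $M\sim 2^{mL}$, which by \eqref{multiscale eMbound} is $\lesssim 2^{-\alpha L}$. The compression defect is the crux. A termwise triangle bound $\sum_{\abs{\mu_j}\le L}\norm{\bA_j - \bA_{j,p}}$ costs a full factor $2^{m\ell}$ per level and yields only the weaker exponent $\frac{\alpha}{m}\cdot\frac{\tau}{1+\tau}$. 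To reach \eqref{sparsecomprlimit} I would instead exploit quasi-orthogonality: the coefficients $\pf_j$ of a fixed level $\ell$ have uniformly finitely overlapping supports, and the $\bM_j$ couple only the sparse parametric neighbours $\nu\pm e^j$. This should yield a per-level estimate in which the same-level defects combine in $\ell^2$ rather than $\ell^1$, i.e.\ $\norm{\sum_{\abs{\mu_j}=\ell}(\bA_j-\bA_{j,p_\ell})\otimes\bM_j}\lesssim (\sum_{\abs{\mu_j}=\ell}\norm{\bA_j-\bA_{j,p_\ell}}^2)^{1/2} \lesssim 2^{m\ell/2}\,2^{-\alpha\ell}2^{-\tau p_\ell}$ using (i), after which the levels are summed by the triangle inequality.

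For the sparsity count I would use property (ii) together with the two nonzeros per column of each $\bM_j$, giving at most $\lesssim \sum_{\ell\le L} 2^{m\ell}(1+\ell^q)2^{p_\ell}$ nonzeros per column of $\bA_n$. It then remains to balance this against the budget $2^n$: I would take the $p_\ell$ to decay linearly in $\ell$, compressing the coarse, dominant levels most heavily so that the compression error is concentrated near the top level $\ell = L$, and $L\sim n$ up to constants. Because $\tau > \frac{\alpha}{m}$, all the resulting geometric sums are dominated by the top level, and carrying them through gives count $\lesssim \alpha_n 2^n$ and error $\lesssim \beta_n 2^{-sn}$ for every $s<s^*$ with $s^* = \frac{\alpha}{m}\cdot\frac{2\tau}{1+2\tau}$; the extra $2^{m\ell/2}$ in place of $2^{m\ell}$ is precisely what upgrades $\frac{\tau}{1+\tau}$ to $\frac{2\tau}{1+2\tau}$. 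The polynomial-in-$n$ factors from the weights $(1+\ell^q)$ and from the number $\sim L$ of levels are harmless, as they are absorbed by taking $s$ strictly below $s^*$, leaving $\alpha_n,\beta_n$ summable.

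The main obstacle is the per-level $\ell^2$ (rather than $\ell^1$) combination of same-level defects; the rest is bookkeeping of geometric series. Establishing it requires controlling the interaction of the finitely overlapping spatial supports with the sparse parametric coupling of the $\bM_j$, and in particular the coarse-scale rows and columns, where a single spatial degree of freedom can meet many same-level coefficients $\pf_j$. This is presumably where the polynomial weight $(1+\abs{\mu_j}^q)$ and the regularity hypotheses on $\xi_\mu$ and the spatial wavelets, deferred to Appendix \ref{sec:opapprox}, enter.
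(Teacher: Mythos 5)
Your proposal follows essentially the same route as the paper's proof: truncation by level combined with level-dependent compression depths $n_\ell$ affine in $\ell$, the per-level $\ell^2$-combination of same-level defects giving the factor $2^{m\ell/2}$ (which the paper justifies by the orthogonality of the images of those $\bA_j$ whose $\xi_{\mu_j}$ have disjoint supports, with a constant depending on the finite overlap within each level), and geometric sums dominated by the top level, yielding $s^*=\frac{\alpha}{m}\frac{2\tau}{1+2\tau}$ with the polynomial factors absorbed by taking $s<s^*$. One small clarification: the regularity hypotheses deferred to Appendix \ref{sec:opapprox} serve only to verify assumptions (i) and (ii) of the proposition, not the $\ell^2$ step itself, which needs just the finite overlap of same-level supports and $\norm{\bM_j}\leq 1$.
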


Specifically, it is shown in Appendix \ref{sec:opapprox} that the above assumptions can be realized for arbitrarily large $\tau$
by choosing the functions $\xi_\mu$ and the spatial wavelets sufficiently smooth, the latter having sufficiently many vanishing moments.
By the above result, we thus obtain that $\bA$ is then $s^*$-compressible where $s^*<\alpha/m$ comes as close to $\alpha/m$ as one wishes
when $\tau$ is suitably large. As discussed in further detail in \S\ref{sec:approximability}, this means that the $n$-term approximability of $\bu$ can be essentially fully exploited by the adaptive scheme.

\begin{proof}
We construct approximations $\bA_{\mathbf{n}}$ of $\bA$ by choosing sequences $\mathbf{n} = (n_{j})_{j\geq 0}$ of bounded support and defining $\bA_{\mathbf{n}}\colon \spl{2}(\sidx\times\pidx)\to \spl{2}(\sidx\times\pidx)$ by
\begin{equation}\label{defAn}
	\bA_\mathbf{n} \coloneqq \sum_{j\geq 0} \bA_{j, n_{j}} \otimes \bM_j.
\end{equation}
Our aim is to find such $\mathbf{n}^J$ such that the corresponding $\bA^J := \bA_{\mathbf{n}^J}$ satisfy
\begin{equation}
	\norm{\bA - \bA^J} \lesssim J^{-2}2^{-s J},\quad J\in\N,
\end{equation}
with $s<s^*$ and $s^*$ as in the assertion, and such that the number of nonzero entries in the each row and column of $\bA^{J}$
is bounded by a fixed constant multiple of $J^{-2}2^J$.

We take $L\in\N$ arbitrary but fixed. Recall that we assume $\mu_j$ to be ordered by increasing level, that is, $\abs{\mu_{j+1}} \geq \abs{\mu_j}$. We now consider $(n_j)_{j\geq 0}$ satisfying $n_j = 0$ for $j >  M_L \coloneqq \ceil{L^{2m/\alpha}2^{mL}}$, {so that $\bA_{j,n_j} = 0$ for $j > M_L$.} Since  $e_{M_L} \lesssim L^{-2} 2^{-\alpha L}$ by \eqref{S} and \eref{multiscale eMbound}, we obtain
\begin{equation}\label{operr_triineq}
	\norm{\bA - \bA_\mathbf{n}} \lesssim  \biggnorm{ \sum_{j=0}^{M_L} {\bigl( \bA_j - \bA_{j,n_j} \bigr)} \otimes \bM_j }  + L^{-2}  2^{-\alpha L}.
\end{equation}
Within each level $\ell \geq 0$, that is, for each $\mu$ with $\abs{\mu}=\ell$, there are only finitely many $\mu'$ with $\abs{\mu'}=\ell$ such that $\supp \xi_\mu \cap \supp\xi_{\mu'} \neq \emptyset$. Since the images of $\bA_j$ corresponding to $\xi_{\mu_j}$ with disjoint support are orthogonal, we obtain
\begin{equation}
	\biggnorm{ \sum_{j=0}^{M_L} {\bigl( \bA_j - \bA_{j,n_j} \bigr) } \otimes \bM_j  }   	
	\lesssim  \sum_{\ell=0}^{\ceil{L+ \frac{2}{\alpha}\log_2 L}} \Bigl( \sum_{j\colon\abs{\mu_j} = \ell}  \norm{\bA_j - \bA_{j,n_j}}^2  \Bigr)^{\frac12} ,
\end{equation}
where the constant depends on the maximum number of $\xi_\mu$ of overlapping support on each level.
Taking 
\[
n_j = n_\ell = \Bigl\lceil{ {\frac{m \ell}{2\tau} + \frac{\alpha}\tau \Bigl(L+ \frac{2}{\alpha}\log_2 L - \ell\Bigr)} + \frac1\tau \log_2(1+\ell)^2 } \Bigr\rceil
\]
for $\mu_j$ of level $\ell$ and recalling that for such $j$ we have $|c_{\mu_j}|=2^{-\alpha\ell}$  gives
\begin{equation}
\label{errL}
	 \sum_{\ell=0}^{\ceil{L+ \frac{2}{\alpha}\log_2 L}} \Bigl( \sum_{j\colon\abs{\mu_j} = \ell}  \norm{\bA_j - \bA_{j,n_j}}^2  \Bigr)^{\frac12}
	  \lesssim
	  \sum_{\ell = 0}^{\ceil{L+ \frac{2}{\alpha}\log_2 L}} 2^{\frac{m}2 \ell} 2^{-\alpha \ell} 2^{-\tau n_\ell} \lesssim L^{-2} 2^{-\alpha L}.
\end{equation} 
Let $N_L$ be the resulting maximum number of entries per row and column in $\bA_{\mathbf{n}}$, then 
\begin{align}
	N_L &\lesssim \sum_{\ell = 0}^{\ceil{L+ \frac{2}{\alpha}\log_2 L}} (1+\ell^q)2^{m\ell} 2^{n_\ell} 
	\lesssim
	 	L^{\frac{2}{\alpha}}2^{\frac\alpha{\tau} L }  \sum_{\ell = 0}^{\ceil{L+ \frac{2}{\alpha}\log_2 L}} (1+\ell)^{q+\frac2\tau} 2^{(1 + \frac1{2\tau}) m \ell - \frac\alpha{\tau} \ell} \nonumber\\
	& \lesssim L^{q + \frac{2(1+m)}\alpha} 2^{\frac{1+2\tau}{2\tau} m L},
\end{align}
where we have used $\tau > \alpha/m$.

We now fix $s>0$ with $s < t \coloneqq \frac{\alpha}{m} \frac{2\tau}{1+2\tau}$ and
  take $J \coloneqq \ceil{\frac{t}{s} \frac{1+2\tau}{2\tau} m L} = \ceil{\frac\alpha{s} L}$ and $\mathbf{n}^J \coloneqq \mathbf{n}$.
Since then $N_L \lesssim J^{q+ \frac{2(1+m)}\alpha}2^{\frac{s}{t} J}$ we  see that  $N_L\lesssim J^{-2}2^J$
with a constant that depends on $\alpha, m$ and increases when $s$ approaches $t$.
It immediately follows from \eref{errL} that 
\begin{equation}
	 \norm{\bA - \bA^J} \lesssim J^{-2}2^{- s J} 
\end{equation}
with a constant depending on $ m$.
Thus $\bA$ is $s^*$-compressible with $s^* = t$.
\end{proof}

\subsection{Coefficient expansions}\label{sec:aniso}

In our compressibility result Proposition \ref{prop:sparsecompr} for $\bA$, we have made use of the multiscale structure of the expansion functions $\theta_j$. Let us now briefly compare this to previous results for globally supported $\theta_j$ as they arise in Karhunen-Lo\`eve expansions.
In fact, for certain problems one has equivalent expansions in either globally supported or wavelet-type $\theta_j$. This is demonstrated, for instance, in \cite{BCM:2016} for lognormal diffusion coefficients with Gaussian random fields of Mat\'ern covariance.

In order to illustrate the basic issues in approximation $\bA$ in the case of typical globally supported $\theta_j$, we consider the following spatially one-dimensional setting with $D=]0,1[$ as in \cite{Gittelson:14}: for a monotonically decreasing positive sequence $(c_j)_{j\in\N}$ with $\sum_{j\geq 1} c_j \leq \frac12$, take $\pf_j = c_j \sin(j\pi \cdot)$, so that
\begin{equation}
\label{cosexp}
   a(y) = 1 + \sum_{ j \geq 1 } y_j c_j \sin(j\pi \cdot).
\end{equation}
This model is representative in that such increasingly oscillatory $\pf_j$ as $j\to\infty$ also arise in more general Karhunen-Lo\`eve expansions.

As a concrete example, with $\beta> 1$, let $c_j := \delta j^{-\beta}$ with $\delta>0$ sufficiently small. Then $\norm{\bA_j} \sim c_j$, from which we only obtain
\[
  e_M \lesssim M^{-\beta + 1},
\]
and therefore $S = \beta - 1$. 
As shown in \cite{Gittelson:14}, taking the compression of the individual $\bA_j$ into account one obtains $s^*$-compressibility of $\bA$ with $s^* = \frac{1}{2}(\beta - 1) = \frac12 S$.
In this case, instead of conditions (i) and (ii) in Proposition \ref{prop:sparsecompr} one has $ \norm{\bA_j -\bA_{j,n}} \lesssim j^{-(\alpha+\frac12)} 2^{-\gamma n}$ with $\Ocal( j ( 1 + \log_2 j) 2^n)$ entries per row and column. We comment further in Remark \ref{rem:badcompr} on how this leads to the limitation to $s^* = \frac{1}{2}(\beta - 1)$.

\section{Low-rank approximation}\label{sec:lowrank}

We now turn to an adaptive method for finding low-rank approximations of the form \eqref{fulllr}, based on the Hilbert-Schmidt decomposition of $\bu$.
These approximations are of the form
\be\label{fulllrcoeff}
 \bu \approx \sum_{k=1}^n \bu^\rs_{k,\lambda} \otimes  \bu^\rp_{k,\nu}.
\ee
with finitely supported vectors $\bu^\rs_{k}$, $\bu^\rp_{k}$, $k=1,\ldots,n$. 
 As in the scheme considered in \S\ref{sec:iso}, adaptivity in rank and in the basis expansions is intertwined by iteratively improving low-rank expansions of varying ranks, while at the same time identifying finitely supported approximations in $\spl{2}(\sidx)$ and $\spl{2}(\pidx)$, both based on approximate residual evaluations.

In principle, the results of \S \ref{sec:iso} concerning a full separation of variables based on hierarchical tensor formats could be applied with any finite truncation dimension $d$. However, assuming \eqref{S}, a total error of order $\varepsilon$ requires $d(\varepsilon) \sim \varepsilon^{-1/S}$. As a consequence, due to the $d$-dependent quasi-optimality \eqref{recompress quasiopt} of the hierarchical SVD truncation, we can only obtain a highly suboptimal complexity bound in \eqref{htcomplexity} for the hierarchical format.

Concerning low-rank decompositions, we therefore concentrate here on a more basic case, namely a separation of spatial and parametric variables 
as in \eqref{fulllrcoeff}. Since this separation also occurs in any hierarchical representation, the resulting Hilbert-Schmidt rank provides a lower bound for the  hierarchical ranks that are required in a hierarchical format involving further matricizations.

The efficiency of the obtained low-rank approximations is measured against the singular value decomposition of the Hilbert-Schmidt operator $\spl{2}(\pidx)\to \spl{2}(\sidx)$ induced by $\bu$,
\begin{equation}\label{hsapprox}
  \bu = \sum_{k=1}^\infty \sigma_k \bU^{(\rs)}_k \otimes \bU^{(\rp)}_k \,,
\end{equation}
where $\sigma_k \geq 0$, $\{ \bU^{(\rs)}_k \}$, $\{ \bU^{(\rp)}_k \}$ are orthonormal in $\spl{2}(\sidx)$ and $\spl{2}(\pidx)$, respectively, and
\begin{equation}\label{hsoptcoeff}
  \Bignorm{ \bu - \sum_{k=1}^r \sigma_k \bU^{(\rs)}_k \otimes \bU^{(\rp)}_k}^2 =  \sum_{k>r} \sigma_k^2 = \min_{\rank(\bw)\leq r} \norm{\bu-\bw}^2.
\end{equation}
Ideally, the ranks of computed approximations should be comparable to the minimum $r$ for achieving the same error in \eqref{hsoptcoeff}.

Moreover, we quantify in terms of $\#\left( \bigcup_{k=1}^n \supp \bu^i_{k} \right)$, $i=\rs,\rp$ the number of nonzero coefficients in \eqref{fulllrcoeff}. The reasons for not considering each individual $\# \supp  \bu^i_{k}$ separately are mainly algorithmic: since the numerical methods require orthogonalizations of the sets $( \bu^i_{k} )_{k=1,\ldots,n}$, their complexity is determined by the unions of the respective supports.
To understand the joint approximability of the infinite vectors $\bU^{(i)}_k$, $i=\rs,\rp$, in \eqref{hsapprox} serving as our reference point, we consider the particular {contractions} defined, for   $\bv \in \spl{2}(\sidx \times \pidx)$, by
$\pi^{(\rs)}(\bv) = (\pi^{(\rs)}_\lambda(\bv))_{\lambda\in\sidx}$, $\pi^{(\rp)}(\bv) = (\pi^{(\rp)}_\nu(\bv))_{\nu\in\pidx}$
with
\begin{equation}
	\label{contrxy}
\pi^{(\rs)}_\lambda(\bv) := \Big(\sum_{\nu\in \pidx}|\bv_{\lambda,\nu}|^2\Big)^{1/2},\quad \pi^{(\rp)}_\nu(\bv) := \Big(\sum_{\lambda\in \sidx}|\bv_{\lambda,\nu}|^2\Big)^{1/2}.
\end{equation} 
Note that $\pi^{(\rp)}_\nu(\bu)$ is uniformly proportional to the norm of the corresponding Legendre coefficient of $u$, that is, $\pi^{(\rp)}_\nu(\bu) \sim \norm{u_\nu}_V$.

Let $(\lambda^*_{k})_{k \in \N}$ and $(\nu^*_{k})_{k \in \N}$ be such that $(\pi^{(\rs)}_{\lambda^*_{k}}(\bv))_{k\in\N}$ and $(\pi^{(\rp)}_{\nu^*_{k}}(\bv))_{k\in\N}$ are nonincreasing, respectively.
Then the singular values $\sigma_k(\bv)$ of $\bv$ satisfy
\beqn
\label{svpibound}
    \sigma_k(\bv) \, \leq \, \pi^{(\rs)}_{\lambda^*_k}(\bv),\,\pi^{(\rp)}_{\nu^*_k}(\bv), \quad k \in \N.
\eeqn
 
In view of our results for Example \ref{ex2} (and the further numerical experiments of Example \ref{ex:cfdecay}), we cannot generally expect faster than algebraic decay of singular values, which we quantify in terms of classes $\AH{\gamma}$ specialized to tensors of order two and to the specific sequence $\gamma(k):= (1+k)^{\bar s}$. This yields the approximation classes
\[
 \Sb :=\Bigl\{\bv\in \spl{2}(\sidx\times\pidx): \sup_{k\in\N}(1+k)^{\bar s}\Big(\sum_{j>k}\sigma_k(\bv)^2\Big)^{1/2} =: \|\bv\|_{\Sb}<\infty \Bigr\}.
\]
The approximate sparsity of the sequences $\pi^{(\rs)}(\bv)$, $\pi^{(\rp)}(\bv)$ is measured in terms of the largest $s_\rs, s_\rp>0$ such that $\pi^{(\rs)}(\bv) \in \Acal^{s_\rs}(\sidx)$, $\pi^{(\rp)}(\bv) \in \Acal^{s_\rp}(\pidx)$ according to \eqref{Asdef}.

For the low-rank approximation, the routines $\recompress$ and $\coarsen$ used in Algorithm \ref{alg:tensor_opeq_solve}
are based on the specialization to tensors of order two of the routines described in the previous section.
$\recompress(\bv;\eta)$ is a numerical realization of $\hatPsvd{\eta}(\bv)$, which we define as the operator producing the best low-rank approximation of $\bv$ with error at most $\eta$ with respect to $\norm{\cdot}$, obtained by truncating the singular value decomposition of its argument. 
 
The routine $\coarsen(\bv;\eta)$ is constructed as in \S\ref{sec:iso} based on the contractions $\pi^{(\rs)}(\bv)$, $\pi^{(\rp)}(\bv)$ defined as in \eqref{contrxy}.
The following result differs from \cite[Theorem 7]{BD}, which is formulated for general hierarchical tensors, in that we now consider differing sparsity classes for the contractions
$\pi^{(i)}$, $i=\rs,\rp$. In view of the preceding discussion, it is reasonable to assume possibly different but algebraic decay for both contractions.

\begin{theorem}
\label{lmm:combined_coarsening}
Let $\mathbf{u}, \mathbf{v} \in \spl{2}(\sidx\times\pidx)$ with
$\norm{\mathbf{u}-\mathbf{v}} \leq \eta$. 
Then for
\begin{equation}
\label{weta}
\mathbf{w}_{\eta} := \hatCctr{2^{3/2}(1+\alpha)\eta} \bigl(\hatPsvd{
(1+\alpha)\eta} (\mathbf{v}) \bigr) \,,
\end{equation}
we have
\begin{equation}
\label{eq:combinedcoarsen_errest}
  \norm{\bu - \bw_\eta} \leq \bigl( 2+\alpha +  2^{3/2} (1+\alpha)\bigr) \eta .
\end{equation}
Moreover, when $\mathbf{u}\in\Sb$, $\pi^{(i)}(\mathbf{u})  \in \cA^{s_i}$,  $i=\rs,\rp$, we have
\begin{equation}\label{eq:combinedcoarsen_rankest}
 \abs{\rank(\bw_\eta)}_\infty \leq 2
 \bigl(\alpha^{-1} \norm{\bu}_{\Sb} \bigr)^{\frac1{\bar s}} \eta^{-\frac1{\bar s}}\,,\qquad \norm{\bw_\eta}_{\Sb}
 \leq 2 (1 + \alpha^{-1}) \norm{\bu}_{\Sb} ,
\end{equation}
and
\begin{equation}\label{eq:combinedcoarsen_suppest}
\begin{aligned}
 \#\supp_\rs (\mathbf{w}_\eta)  +  \#\supp_\rp (\mathbf{w}_\eta)  &\leq 
 2+  \biggl( \frac{2\|\pi^{(\rs)}(\bu)\|_{\Acal^{s_\rs}}}{\alpha\eta}\biggr)^{\frac1{s_\rs}} +
 \biggl( \frac{2\|\pi^{(\rp)}(\bu)\|_{\Acal^{s_\rp}}}{\alpha\eta}\biggr)^{\frac1{s_\rp}}
   \\
  \norm{\pi^{(i)}(\bw_\eta)}_{\Acal^{s_i}} 
  &\leq C 
     \norm{ \pi^{(i)}(\bu)}_{\Acal^{s_i}} , \quad i=\rs,\rp, 
 \end{aligned}
\end{equation}
where $C$ depends on $\alpha$ and $s_i$, $i=\rs,\rp$.
\end{theorem}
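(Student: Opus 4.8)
Here is how I would attack the statement.

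\medskip

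The plan is to follow the structure of \cite[Thm.\ 7]{BD}, whose proof treats recompression and coarsening of hierarchical tensors, and to adapt it to the Hilbert--Schmidt (order-two) situation, the essential new feature being that the two contractions $\pi^{(\rs)}$ and $\pi^{(\rp)}$ are allowed to lie in different sparsity classes $\Acal^{s_\rs}$, $\Acal^{s_\rp}$. The error bound \eqref{eq:combinedcoarsen_errest} is the easy part: writing $\mathbf{b} := \hatPsvd{(1+\alpha)\eta}(\bv)$, I would estimate $\norm{\bu - \bw_\eta}$ by the triangle inequality as $\norm{\bu-\bv} + \norm{\bv - \mathbf{b}} + \norm{\mathbf{b} - \bw_\eta}$ and insert the three defining accuracies $\norm{\bu-\bv}\le\eta$, the error $\norm{\bv-\mathbf{b}}\le(1+\alpha)\eta$ of the truncation $\hatPsvd{(1+\alpha)\eta}$, and the error $\norm{\mathbf{b}-\bw_\eta}\le 2^{3/2}(1+\alpha)\eta$ of the coarsening $\hatCctr{2^{3/2}(1+\alpha)\eta}$, which add up to exactly the stated constant.

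\medskip

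For the rank and $\Sb$-norm bounds \eqref{eq:combinedcoarsen_rankest} I would first record two monotonicity facts: truncating the SVD retains only the leading singular values, and the coarsening restricts $\mathbf{b}$ to a product index set $\Lambda_\rs\times\Lambda_\rp$, i.e.\ acts as $P_\rs(\cdot)P_\rp$ with coordinate projections $P_\rs,P_\rp$; since both are contractions, $\sigma_j(\bw_\eta)\le\sigma_j(\mathbf{b})\le\sigma_j(\bv)$ for all $j$, and in particular $\rank(\bw_\eta)\le r^\ast:=\rank(\mathbf{b})$. To bound $r^\ast$, I would use that $\hatPsvd{(1+\alpha)\eta}$ selects the \emph{minimal} rank achieving error $(1+\alpha)\eta$, and exhibit a competitor: the best rank-$r$ approximation $\bu_r$ of $\bu$ satisfies $\norm{\bv-\bu_r}\le\eta + (\sum_{j>r}\sigma_j(\bu)^2)^{1/2}\le\eta+(1+r)^{-\bar s}\norm{\bu}_{\Sb}$ by Eckart--Young and $\bu\in\Sb$, so that $r\sim(\norm{\bu}_{\Sb}/(\alpha\eta))^{1/\bar s}$ already makes this $\le(1+\alpha)\eta$. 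For the $\Sb$-norm, I would bound $(1+k)^{\bar s}(\sum_{j>k}\sigma_j(\bw_\eta)^2)^{1/2}$ for $k<r^\ast$ (the tail vanishes for $k\ge r^\ast$) by $(1+k)^{\bar s}\eta+\norm{\bu}_{\Sb}$ using the same comparison, and then absorb the first summand via $(1+k)^{\bar s}\le (r^\ast)^{\bar s}\le\alpha^{-1}\norm{\bu}_{\Sb}\eta^{-1}$.

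\medskip

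The support and $\Acal^{s_i}$-norm bounds \eqref{eq:combinedcoarsen_suppest} are where the differing sparsity exponents enter. The starting point is the stability of contractions under $\ell^2$-perturbations, $\norm{\pi^{(i)}(\mathbf{b})-\pi^{(i)}(\bu)}\le\norm{\mathbf{b}-\bu}\le(2+\alpha)\eta$, together with $\pi^{(i)}(\bu)\in\Acal^{s_i}$. Recalling that the coarsening chooses $N_\rs,N_\rp$ of minimal total size subject to the joint tail bound $(\sum_{i}\sum_{k>N_i}|\pi^{(i)}_{\mu^*_{i,k}}(\mathbf{b})|^2)^{1/2}\le 2^{3/2}(1+\alpha)\eta$, I would upper-bound $N_\rs+N_\rp$ by producing a feasible pair: splitting the budget between the two modes and using that the best $N$-term error of $\pi^{(i)}(\mathbf{b})$ is at most $(N+1)^{-s_i}\norm{\pi^{(i)}(\bu)}_{\Acal^{s_i}}+(2+\alpha)\eta$, one gets $N_i\lesssim(\norm{\pi^{(i)}(\bu)}_{\Acal^{s_i}}/(\alpha\eta))^{1/s_i}$, which is the claimed bound with its mode-dependent exponents. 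Finally, since $\pi^{(i)}(\bw_\eta)$ is supported on the retained indices and entrywise dominated by $\pi^{(i)}(\mathbf{b})$, I would obtain $\norm{\pi^{(i)}(\bw_\eta)}_{\Acal^{s_i}}\lesssim\norm{\pi^{(i)}(\bu)}_{\Acal^{s_i}}$ from the standard coarsening lemma of \cite{Cohen:01} (a thresholding close to an $\Acal^{s_i}$-sequence stays in $\Acal^{s_i}$ with comparable norm).

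\medskip

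I expect the main obstacle to be the bookkeeping in the support estimate: because the coarsening optimizes $N_\rs+N_\rp$ jointly while the two modes carry different rates $s_\rs\neq s_\rp$, one must split the single tolerance $2^{3/2}(1+\alpha)\eta$ across modes and track how the truncation perturbation $(2+\alpha)\eta$ interacts with the coarsening budget so that enough slack remains to invoke the $n$-term approximability of each $\pi^{(i)}(\bu)$; keeping the resulting constants in the clean stated form, and verifying the $\Acal^{s_i}$-norm transfer from $\mathbf{b}$ back to $\bu$, is the delicate step, whereas the error and rank estimates follow essentially immediately from monotonicity of the singular values.
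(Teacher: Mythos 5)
Your proposal is correct and takes essentially the same route as the paper's own proof: the error bound by the triangle inequality over the three tolerances, the rank and $\Sb$ estimates by comparing with the best rank-$r$ truncation of $\bu$ (which the paper simply delegates to \cite[Thm.\ 7]{BD}), and the support and $\Acal^{s_i}$ bounds by exhibiting an equilibrated feasible pair $(N_\rs,N_\rp)$ with mode-dependent exponents and then transferring $\Acal^{s_i}$-membership through the perturbation estimate $\inf_{\#\supp \hat\bw\leq n}\norm{\hat\bw - \pi^{(i)}(\bw_\eta)} \leq \norm{\bw_\eta-\bu} + n^{-s_i}\norm{\pi^{(i)}(\bu)}_{\Acal^{s_i}}$, which is exactly the computation in Appendix~B. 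The only cosmetic difference is that you reprove inline (via monotonicity of singular values and Eckart--Young) the quasi-optimality facts that the paper imports from \cite{BD}, including the delicate mode-wise bookkeeping in the $\Acal^{s_i}$-stability step, which you correctly identify as the crux and which the paper handles via its bound $N\leq B_i n_i$.
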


The estimates \eqref{weta}, \eqref{eq:combinedcoarsen_errest} have been already shown in \cite{BD}.
The only deviation concerns the stability estimate \eqref{eq:combinedcoarsen_suppest}, which we prove in Appendix \ref{app:thm3}.

To apply Algorithm \ref{alg:tensor_opeq_solve} it remains to specify the approximate application of $\bA$ by the procedure $\apply$ to representations of the form \eqref{hsapprox}. As part of this procedure, we shall also use a modified routine $\coarsen_\rp$ which operates only on the second tensor mode and leaves $\supp_\rs$ unchanged. For this routine, we shall only use the simpler statement that for any $\bv\in\spl{2}(\sidx\times\pidx)$ with $\pi^{(\rp)}(\bv) \in \Acal^{s_\rp}(\pidx)$, $\bv_\rp \coloneqq \coarsen_\rp(\bv;\eta)$ satisfies
\[  \#\supp_\rp (\bv_\rp) \lesssim \eta^{-\frac1{s_\rp}} \norm{\pi^{(\rp)}(\bv)}_{\Acal^{s_\rp}}^{\frac1{s_\rp}}, \quad  \norm{\pi^{(\rp)}(\bv_\rp)}_{\Acal^{s_\rp}} \lesssim \norm{\pi^{(\rp)}(\bv)}_{\Acal^{s_\rp}}. \]

\subsection{Adaptive operator application}\label{ssec:adaptop}

We now describe a specification of the more generic routine $\apply$ used in \cite{BD} that is tailored  to exploit anisotropy in the parametrizations of parametric operators.
For  any given $\eta>0$ and finitely supported $\bv$ we aim to construct $\bw_\eta$ such that $\norm{\bA \bv - \bw_\eta} \leq \eta$.
We follow here the general strategy of combining a priori knowledge on $\bA$ with a posteriori information on $\bv$, which is given in terms of a
suitable decomposition of $\bv$. The routine $\apply$ is structured as follows:\medskip

\noindent
(S1)  {\em Preprocessing and decomposing the input:}
We first apply a preprocessing step to the finitely supported  input $\bv$ 
that consists of applications of $\recompress$ and $\coarsen_\rp$.
We choose for a given $\eta >0$ the tolerances 
of order $\eta$ in such a way that the resulting $\bv_\eta$ satisfies  
\be
\label{etahalf}
\|\bv - \bv_\eta\|\le \frac\eta{2\|\bA\|}.
\ee
As a consequence, for any positive $s_\rp$, $\bar s$ we have
\be
\label{vcoarsened}
\rank(\bv_\eta)\lesssim \eta^{-\frac{1}{\bar s}}\|\bv\|_{\Sb}^{\frac{1}{\bar s}},\quad  \|\bv_\eta\|_{\Sb}\lesssim \|\bv\|_{\Sb},
\ee
and
\be
\label{supps}
   \#\supp_\rp (\mathbf{v}_\eta )  \lesssim 
  {\eta}^{-\frac1{s_\rp}} \norm{\pi^{(\rp)}(\bv)}^{\frac1{s_\rp}}_{\Acal^{s_\rp}}, \quad
  \norm{\pi^{(\rp)}(\bv_\eta)}_{\Acal^{s_\rp}} 
  \lesssim 
     \norm{ \pi^{(\rp)}( \bv)}_{\Acal^{s_\rp}} .
\end{equation}
We then have the SVD of $\bv_\eta$ at hand,
\beqn
\label{SVD}
 \bv_\eta  = \sum_{k =  1}^K \sigma_k \bU^{(\rs)}_k \otimes \bU^{(\rp)}_k\,,
\eeqn
and set $K_p = \{ 2^p ,\ldots, \min\{K,2^{p+1} - 1\}\}$, for $p = 0,1,\ldots$, $p\le \log_2 K$.
Furthermore,  for $q=0,1,\ldots$, let $\hat\Lambda^{(\rp)}_{q}$ be the support of the best $2^q$-term approximation of $\pi^{(\rp)}(\bv_\eta)$. We set $\Lambda^{(\rp)}_{0} := \hat\Lambda^{(\rp)}_{0}$ and $\Lambda^{(\rp)}_{q} :=\hat \Lambda^{(\rp)}_{q} \setminus \hat\Lambda^{(\rp)}_{q-1}$ for $q>0$.
With this subdivision of  $\supp_\rp(\bv_\eta)$, we now define
\beqn
\label{vpq}
   \bv_{[p,q]} :=  \Restr{ \sidx \times \Lambda^{(\rp)}_{q}} \sum_{k\in K_p}  \sigma_k \bU^{(\rs)}_k \otimes \bU^{(\rp)}_k  = \sum_{k\in K_p} \sigma_k \bU^{(\rs)}_k\otimes \Restr{\Lambda^{(\rp)}_{q}}
   \bU^{(\rp)}_k  \,,
\eeqn
and obtain
\begin{equation}
\label{decoAv}
    \bA \bv_\eta  = \sum_{p, q \geq 0}   \sum_{j=0}^\infty (\bA_j \otimes \bM_j) \bv_{[p,q]} 
      =  \sum_{p, q \geq 0} \sum_{j=0}^\infty  \sum_{k\in K_p} \sigma_k \,\bigl( \bA_j\bU^{(\rs)}_k\bigr) \otimes  \bigl(\bM_j \Restr{\Lambda^{(\rp)}_{q}}  \bU^{(\rp)}_k \bigr) \,.
\end{equation}

\noindent
(S2) {\em Adaptive operator truncation:}
To construct an approximation $\bw_\eta$ of $\bA \bv_\eta$ based on this decomposition, 
we truncate the summations over $j$ for each $p,q$ at some index $M_{p,q}\in \N$,   to be determined later,
and then replace the remaining terms $\bA_j$ by compressed versions, again depending on the respective $p,q$. 
With $e_M$ defined for nonnegative integer $M$ as in \eqref{eM}, for any given choice of $M_{p,q}$ we have 
\beqn
\label{est1}
   \Bignorm{\bA \bv   - \sum_{p, q \geq 0}  \sum_{j=0}^{M_{p,q}} (\bA_{j} \otimes \bM_j) \bv_{[p,q]} } \leq 
      \sum_{p, q \geq 0 }   e_{M_{p,q}} \norm{\bv_{[p,q]}}. 
\eeqn
We now choose the $M_{p,q}= M_{p,q}(\eta)$ such that
\beqn
\label{est2}
     \sum_{p, q \geq 0 }   e_{M_{p,q}} \norm{\bv_{[p,q]}} \leq \frac{\eta}{4}.
\eeqn
This can be done by choosing positive weights $\alpha_{p,q}$ such that $\sum_{p,q}\alpha_{p,q}=1$,
computing $\norm{\bv_{[p,q]}}$, 
 and adjusting the $M_{p,q}$ so as to guarantee that 
\beqn
\label{etapq0}
e_{M_{p,q}}\|\bv_{[p,q]}\|\le \eta_{p,q} := \alpha_{p,q}\eta/4.
\eeqn
We will give an a priori choice for $M_{p,q}$ in \eqref{Mpqest} below, but one may as well use, e.g., the Greedy scheme proposed in \cite{Gittelson:13} for selecting these values.\medskip

\noindent
(S3) {\em Adaptive application of the spatial components $\bA_j$:}
Next, in order to realize an approximate application of the (generally) infinite matrices $\bA_j$ to
$\bU^{(\rs)}_k$ in \eqref{decoAv}  we replace $\bA_j \bv_{[p,q]}$ by  by an approximation $\tilde\bA_{j,p,q} \bv_{[p,q]} $ using   \eqref{compress}  so as to satisfy
\be
\label{etapq}
   \biggnorm{ \sum_{j=0}^{M_{p,q}} (\bA_j -  \tilde\bA_{j,p,q})\otimes \bM_j \, \bv_{[p,q]} } \leq \eta_{p,q}.
\ee
The approximate operators $\tilde\bA_{j,p,q}$ will be specified later.
The sought approximation of $\bA\bv$ can now be obtained as
\beqn
\label{weta assembly} 
    \bw_\eta  := \sum_{p,q \geq 0}   \sum_{j=0}^{M_{p,q}} (\tilde\bA_{j,p,q} \otimes \bM_j) \bv_{[p,q]} ,
\eeqn
which by the above construction satisfies the computable error bound
\begin{equation}
\label{errorbound}
  \norm{\bA \bv_\eta - \bw_\eta} \leq \sum_{p,q \geq 0 } \bigl( e_{M_{p,q}} \norm{\bv_{[p,q]}} +  \eta_{p,q}  \bigr) \leq \eta/2  \,,
\end{equation}
so that in summary $\|\bA \bv - \bw_\eta\|\le \eta$.

In summary, the above adaptive approximation of $\bA$ to a given finitely supported $\bv$ involves the following steps:\\

\begin{samepage}
\noindent
\hrule\vspace{4pt}
$\apply: \bv \to \bw_\eta$, with $\bv$ given by its SVD\vspace{2pt}
\hrule\vspace{-6pt}
\begin{itemize}
\item[(S1):] compute $\bv_\eta \coloneqq \coarsen_\rp(\recompress(\bv; \eta/4\norm{\bA}); \eta/4\norm{\bA})$ and (quasi-)sort\footnote{As usual, to warrant a linear scaling, instead of exact ordering it suffices to perform approximate sorting into buckets according to some fixed exponential decay.} the entries of $\pi^{(\rp)}(\bv_\eta)$ to obtain  
  the sets $\Lambda^{(\rp)}_{q}$;

\item[(S2):] compute the quantities $\norm{\bv_{p,q}}$ and determine the truncation values $M_{p,q}= M_{p,q}(\eta)$;

\item[(S3):] compute the quantities $\bigl(\pi^{(\rs)}_\nu(\bv_{[p,q]})\bigr)_{\nu\in\sidx}$ and use these to obtain the compressed matrices $\tilde\bA_{j,p,q}$, using \eqref{vpq} in the assembly step \eqref{weta assembly}. 
\end{itemize}
\hrule
\end{samepage}

\subsection{Complexity analysis}

To quantify the complexity of computing $\bw_\eta$ in \eqref{weta assembly} we need to specify the properties of the operator $A(y)$ as well as the
sparsity properties of the input.
In view of  our preceding discussion, in the scenario of primary interest, the singular values of the solution $\bu$ as well as
the best $n$-term approximations of the contractions $\pi^{(i)}(\bu)$, $i\in \{\rs, \rp\}$, exhibit  algebraic decay rates.
As before, these rates are denoted by $\bar s$ and $s_\rs$, $s_\rp$, respectively. 

As indicated earlier, the complexity 
  of the above scheme depends, in particular, on the operator approximability by truncation. We
  adhere to the natural assumption  that $e_M \le CM^{-S}$ for some positive $S$, see \eref{S}.
In the subsequent discussion, we assume $S> s_i$, $i\in \{\rs,\rp\}$.
As discussed in detail in \S\ref{sec:approx aniso}, this holds true for the expansion model of Assumptions \ref{ass:multiscale} with $S = \frac{\alpha}{m}$.
 We  gather next the properties upon which the complexity analysis 
will be based.

\begin{assumptions}
\label{ass:benchmark}
The solution $\bu$   to \eqref{equiv} and the matrix $\bA$ have the following properties:
\begin{enumerate}[{\rm(i)}]
\item
\label{piu}
One has $\pi^{(i)}(\bu), \pi^{(i)}(\bbf) \in \Acal^{s_i}$, $i=\rs,\rp$, with $s_\rs, s_\rp >0$.
\item
\label{sk}
$\bu, \bbf \in \Sb$ for some $\bar s  \ge s_\rs, s_\rp$.
\item
\label{MS}
There exists a constant $C$ such that
$e_M \le CM^{-S}$, $M\in \N$, where $e_M$ is defined by \eqref{eM} and
\be
\label{Ssi}
S\ge \bar s , s_\rp.
\ee
\item
\label{Am}
The representations $\bA_j$, $j\in \N$, satisfy the assumptions of Proposition \ref{prop:sparsecompr}, where $\tau$ satisfies
\be
\label{tau}
\frac{2\tau}{1+2\tau} \frac\alpha{m} = 
\frac{2\tau}{1+2\tau} S > s_\rs.
\ee
\end{enumerate}
\end{assumptions}

Under these condition, one can construct a routine $\rhs$ that satisfies, for sufficiently small $\eta>0$ and $\bbf_\eta \coloneqq \rhs(\eta)$,
\begin{gather*}	
\#\supp_i (\bbf_\eta) \lesssim \eta^{-\frac1{s_i}} \norm{\pi^{(i)}(\bbf)}_{\cA^{s_i}},
	 \quad \norm{\pi^{(i)}(\bbf_\eta)}_{\cA^{s_i}} \lesssim \norm{\pi^{(i)}(\bbf)}_{\cA^{s_i}}, \quad i \in \{\rs,\rp \},
	 \\ \rank(\bbf_\eta) \lesssim  \eta^{-\frac1{\bar s}} \norm{\bbf}_{\Sb}^{\frac1{\bar s}} , \quad \norm{\bbf_\eta}_{\Sb} \lesssim \norm{\bbf}_{\Sb}.
\end{gather*}
Assuming full knowledge of $\bbf$, one can also realize $\rhs$ using $\Ocal\bigl(\eta^{-\frac1{\bar s} - \frac{1}{\min\{ s_\rs, s_\rp \}}}\bigr)$ operations, and we shall make this idealized assumption in what follows. As in \S\ref{sec:iso}, the requirements on $\rhs$ simplify substantially when the corresponding right hand side $f$ is independent of the parametric variable.

The main result of this section states that up to a logarithmic factor the sparsity properties of the input are
preserved by the output of $\apply$.

\begin{theorem}
\label{thmaincompl}
Suppose that the properties listed under Assumptions \ref{ass:benchmark} hold. Then, given any finitely supported
input $\bv\in \spl{2}({\cal S}\times {\cal F})$, the output $\bw_\eta$ produced
by the procedure $\apply$, based on the steps {\rm (S1)--(S3)}, satisfies
\be
\label{accuracy}
\|\bA \bv - \bw_\eta\|\le \eta.
\ee
Moreover, with some $b\le 2+ \frac{4}{s_\rs}$ one has
\be
\label{rankbars}
\rank(\bw_\eta)\lesssim \eta^{-\frac{1}{\bar s}}\| \bv\|_{\Sb}^{\frac{1}{\bar s}} (1+\abs{\log\eta})^b 
, \quad 
\|\bw_\eta\|_{\Sb}\lesssim  \|\bv\|_{\Sb} (1+\abs{\log\eta})^{\bar s b },
\ee
and
\begin{equation}
\label{rp}
\begin{gathered}
\#\supp_\rp(\bw_\eta)\lesssim \eta^{-\frac{1}{s_\rp}} \|\pi^{(\rp)}(\bv)\|_{\Acal^{s_\rp}}^{\frac{1}{s_\rp}}(1+\abs{\log \eta})^{b}, \\
\|\pi^{(\rp)}(\bw_\eta)\|_{\Acal^{s_\rp}} \lesssim \|\bv\|_{\Acal^{s_\rp}} (1+\abs{\log \eta})^{s_\rp b},
\end{gathered}
\end{equation}
as well as  
\be
\label{sstab}
\begin{gathered}
\#\supp_\rs(\bw_\eta)\lesssim \eta^{-\frac 1{s_\rs}}\|\pi^{(\rs)}(\bv)\|_{\Acal^{s_\rs}}^{\frac 1{s_\rs}} (1+\abs{\log \eta})^{b}, \\
 \|\pi^{(\rs)}(\bw_\eta)\|_{\Acal^{s_\rs}} \lesssim \|\pi^{(\rs)}(\bv)\|_{\Acal^{s_\rs}} (1+\abs{\log \eta})^{s_\rs b},
\end{gathered}
\ee
where the constants depend also on $s_i$, on $\abs{\log \|\pi^{(i)}(\bv)\|_{\Acal^{s_i}}}$, $i\in \{\rs, \rp\}$, and on $\tau$ in Assumption \ref{ass:benchmark}.
\end{theorem}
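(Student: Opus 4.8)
The accuracy bound \eqref{accuracy} is already guaranteed by the construction of $\apply$: the preprocessing \eqref{etahalf} contributes $\norm{\bA(\bv-\bv_\eta)}\le\eta/2$, while the adaptive operator truncation (S2) and the spatial compression (S3) together contribute at most $\eta/2$ through the computable bound \eqref{errorbound}. The whole proof therefore reduces to the complexity estimates \eqref{rankbars}--\eqref{sstab}, and the plan is to control, block by block in the decomposition \eqref{weta assembly}, how much each step inflates the rank and the mode-wise supports relative to those of $\bv_\eta$, which by \eqref{vcoarsened}--\eqref{supps} already carry the target $\eta$-powers.

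The first step is to record two-sided decay estimates for the block norms $\norm{\bv_{[p,q]}}$. Using orthonormality of the singular vectors $\bU^{(\rs)}_k$ and of the parametric restrictions to the disjoint sets $\Lambda^{(\rp)}_q$, one has $\sum_p\norm{\bv_{[p,q]}}^2=\norm{\Restr{\sidx\times\Lambda^{(\rp)}_q}\bv_\eta}^2$ and $\sum_q\norm{\bv_{[p,q]}}^2=\sum_{k\in K_p}\sigma_k^2$. Combining these identities with $\bv_\eta\in\Sb$ (so that $\sum_{k\in K_p}\sigma_k^2\lesssim 2^{-2p\bar s}\norm{\bv}_{\Sb}^2$, since $K_p\subset\{k\ge 2^p\}$) and with $\pi^{(\rp)}(\bv_\eta)\in\Acal^{s_\rp}$ (so that $\norm{\Restr{\Lambda^{(\rp)}_q}\pi^{(\rp)}(\bv_\eta)}\lesssim 2^{-qs_\rp}\norm{\pi^{(\rp)}(\bv)}_{\Acal^{s_\rp}}$ by definition of the dyadic blocks and the stability in \eqref{supps}) yields the key estimate $\norm{\bv_{[p,q]}}\lesssim\min\{2^{-p\bar s}\norm{\bv}_{\Sb},\,2^{-qs_\rp}\norm{\pi^{(\rp)}(\bv)}_{\Acal^{s_\rp}}\}$. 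Moreover only $p\le P:=\log_2\rank(\bv_\eta)$ and $q\le Q:=\log_2\#\supp_\rp(\bv_\eta)$ occur, so by \eqref{vcoarsened}--\eqref{supps} the number of active blocks is $O((1+\abs{\log\eta})^2)$.

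I then fix the error weights $\alpha_{p,q}$ \emph{uniformly} over the finitely many active blocks (equivalently, as a mild polynomial in $p+q$), so that $\sum_{p,q}\alpha_{p,q}=1$ while $\alpha_{p,q}^{-1}\lesssim(1+\abs{\log\eta})^2$. From \eqref{etapq0} and $e_M\le CM^{-S}$ this gives $M_{p,q}\lesssim 1+\alpha_{p,q}^{-1/S}(\norm{\bv_{[p,q]}}/\eta)^{1/S}$, so the weights only cost a polylogarithmic factor rather than an extra $\eta$-power. Since the $(M_{p,q}+1)$-term truncation of the inner $j$-sum applied to the rank-$\abs{K_p}$ tensor $\bv_{[p,q]}$ produces a contribution of rank at most $\abs{K_p}(M_{p,q}+1)\le 2^p(M_{p,q}+1)$, the total rank of \eqref{weta assembly} is bounded by $\sum_{p,q}2^p(M_{p,q}+1)$. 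Inserting the $p$-decay $\norm{\bv_{[p,q]}}\lesssim 2^{-p\bar s}\norm{\bv}_{\Sb}$, the factor $2^p\norm{\bv_{[p,q]}}^{1/S}$ is governed by $2^{p(1-\bar s/S)}\norm{\bv}_{\Sb}^{1/S}$; because $S\ge\bar s$ by \eqref{Ssi} this is nondecreasing in $p$, so the geometric sum over $p\le P$ is controlled by its top term $2^{P(1-\bar s/S)}\lesssim\rank(\bv_\eta)^{1-\bar s/S}$, and the $q$-sum merely adds a further logarithmic factor. Collecting powers via $\rank(\bv_\eta)\sim\eta^{-1/\bar s}\norm{\bv}_{\Sb}^{1/\bar s}$ gives $\rank(\bv_\eta)^{1-\bar s/S}\norm{\bv}_{\Sb}^{1/S}\eta^{-1/S}\sim\eta^{-1/\bar s}\norm{\bv}_{\Sb}^{1/\bar s}$, so the main factor is reproduced \emph{exactly} and all overheads collapse into $(1+\abs{\log\eta})^b$; this is the first bound in \eqref{rankbars}, and the $\Sb$-stability estimate follows from it as in Theorem \ref{lmm:combined_coarsening}.

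The support estimates \eqref{rp} and \eqref{sstab} are obtained by the same weighted summation, now tracking supports rather than ranks. For the parametric support I use that each $\bM_j$ is bidiagonal by \eqref{threeterm_matrix}, so applying $\sum_{j\le M_{p,q}}\bA_j\otimes\bM_j$ enlarges $\Lambda^{(\rp)}_q$ only by the shifts $\{0,\pm e^j\}_{j\le M_{p,q}}$; hence $\#\supp_\rp(\bw_\eta)\lesssim\sum_q 2^q\max_p M_{p,q}$, and now the $q$-decay $\norm{\bv_{[p,q]}}\lesssim 2^{-qs_\rp}\norm{\pi^{(\rp)}(\bv)}_{\Acal^{s_\rp}}$ together with $S\ge s_\rp$ reproduces the sharp power $\eta^{-1/s_\rp}$ exactly as above. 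For the spatial support I invoke Assumption \ref{ass:benchmark}\eqref{Am}/Proposition \ref{prop:sparsecompr}: the compressed operators $\tilde\bA_{j,p,q}$ realizing \eqref{etapq} have at most $O((1+\abs{\mu_j}^q)2^{n_{j,p,q}})$ entries per column, and the compression levels $n_{j,p,q}$ are dictated by the per-block budget $\eta_{p,q}$; condition \eqref{tau} is precisely what ensures the spatial support grows at the rate $\eta^{-1/s_\rs}$ while the level-dependent overheads from $(1+\abs{\mu_j}^q)$ collect into powers of $(1+\abs{\log\eta})$. The main obstacle is exactly this bookkeeping of the double summation over $(p,q)$ — and, for the spatial bound, additionally over the parameter levels $\abs{\mu_j}$: one must choose the weights $\alpha_{p,q}$ and the truncation and compression levels so that the competing decays in $p$ (rate $\bar s$), in $q$ (rate $s_\rp$), in the spatial compression (rate governed by $\tau$), and in the operator truncation (rate $S$) conspire, via the hypotheses $S\ge\bar s,s_\rp$ and \eqref{tau}, to leave the sharp powers $\eta^{-1/\bar s}$, $\eta^{-1/s_\rp}$, $\eta^{-1/s_\rs}$ untouched, relegating every remaining factor to the polylogarithmic term with exponent $b\le 2+\tfrac4{s_\rs}$.
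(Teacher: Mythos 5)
Your overall architecture matches the paper's proof: you derive the same two-sided block bounds $\norm{\bv_{[p,q]}}\lesssim\min\{2^{-\bar s p}\norm{\bv}_{\Sb},\,2^{-s_\rp q}\norm{\pi^{(\rp)}(\bv)}_{\Acal^{s_\rp}}\}$ as in \eqref{vpqest}, bound $\rank(\bw_\eta)$ and $\#\supp_\rp(\bw_\eta)$ by weighted sums $\sum_{p,q}2^pM_{p,q}$ and $\sum_q 2^q\max_pM_{p,q}$, and collapse $\eta^{-1/S}$ times the top term of the geometric sum into the sharp powers $\eta^{-1/\bar s}$, $\eta^{-1/s_\rp}$ using $S\geq\bar s,s_\rp$ — exactly the mechanism of Lemma \ref{lemMpq} and \eqref{rankfirst}, \eqref{suppp2}. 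Your choice of weights $\alpha_{p,q}$ (uniform over the $\Ocal((1+\abs{\log\eta})^2)$ active blocks rather than the paper's summable weights $c((1+p)(1+q))^{-a}$) is a legitimate variant that costs only polylogarithmic factors, and your $\max_p$ observation for the parametric support is even slightly sharper than the paper's $\sum_{p,q}3M_{p,q}2^q$. The stability halves of \eqref{rankbars}--\eqref{sstab} are asserted rather than proved ("as in Theorem \ref{lmm:combined_coarsening}"); the paper in fact uses a rescaled-tolerance argument, comparing $\bw_\eta$ with the outputs $\bw_{B\eta}$ for $B\in[1,\eta^{-1}]$ and taking $\sup_B 2B\eta N_{B\eta}^{\bar s}$ — related in spirit but a distinct device, and this omission would need filling in.

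The genuine gap is in the spatial-support bound \eqref{sstab}, which you correctly identify as "the main obstacle" but then do not actually overcome. You describe compressing each $\bA_j$, $j\leq M_{p,q}$, at a single level $n_{j,p,q}$ "dictated by the per-block budget $\eta_{p,q}$" and applying it to all of $\bv_{[p,q]}$. This fails to give the sharp rate: with a single compression level per $j$, the error relative to the block only decays like $2^{-\tau n}\norm{\bv_{[p,q]}}$, so meeting the budget forces $2^{n}\sim(\norm{\bv_{[p,q]}}/\eta_{p,q})^{1/\tau}$, and the resulting support is $\#\supp_\rs(\bv_\eta)$ \emph{times} this factor — a product of two $\eta$-powers rather than $\eta^{-1/s_\rs}$ alone; moreover a per-$j$ column count $\Ocal((1+\abs{\mu_j}^{q})2^{n})$ summed over $j\leq M_{p,q}$ picks up an extra factor of $M_{p,q}$. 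What the paper actually does, and what is missing from your sketch, is the input-adaptive splitting in the spatial mode: one orders $\pi^{(\rs)}(\bv_{[p,q]})$, forms the dyadic blocks $\Lambda_{p,q,n}$ of its best $2^n$-term approximations, and sets $\tilde\bA_{j,p,q}=\sum_{n=0}^{N}\bA_j^{N-n}\Restr{\Lambda_{p,q,n}}$, so that the denser compressions act only on the few spatial indices carrying the most mass. The error then telescopes as $\sum_n\beta_{N-n}2^{-s_\rs(N-n)}\cdot 2^{-s_\rs n}\norm{\pi^{(\rs)}(\bv_{[p,q]})}_{\Acal^{s_\rs}}\lesssim 2^{-s_\rs N}\norm{\pi^{(\rs)}(\bv_{[p,q]})}_{\Acal^{s_\rs}}$, while the support is $\sum_n 2^n\alpha_{N-n}2^{N-n}\lesssim 2^N$ — and, crucially, the column-support bound \eqref{sparsecompr_sum} of Remark \ref{rem:sparsecompr} is \emph{summed over} $j=0,\ldots,M$, which is where condition \eqref{tau} (i.e., $s_\rs<\frac{2\tau}{1+2\tau}S$) enters and where the potential $M_{p,q}$-factor is absorbed. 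Choosing the smallest $N$ with $2^{-s_\rs N}\norm{\pi^{(\rs)}(\bv_{[p,q]})}_{\Acal^{s_\rs}}\lesssim\eta_{p,q}$ then yields $\#\supp_\rs(\bw_{p,q})\lesssim\eta_{p,q}^{-1/s_\rs}\norm{\pi^{(\rs)}(\bv_\eta)}_{\Acal^{s_\rs}}^{1/s_\rs}$, which sums to \eqref{sstab}. Without this two-scale construction — adaptive in the input contractions \emph{and} staggered in the compression level — the claimed bound $\eta^{-1/s_\rs}$ up to polylogarithmic factors does not follow from the ingredients you cite.
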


\begin{proof}
The error bound \eref{accuracy} is implied by the construction. 
As for the remaining claims,
to assess the complexity of computing $\bw_\eta$, given by \eqref{weta assembly}, we estimate first $M_{p,q}=M_{p,q}(\eta)$ in terms of $\eta$.
To obtain a priori bounds for the $M_{p,q}$, we use Assumptions \ref{ass:benchmark}\eqref{piu} and \eqref{sk}
to conclude that
\beqn
\label{vpqest}
     \norm{\bv_{[p,q]}} \leq 2^{-s_\rp q}\norm{\pi^{(\rp)}(\bv)} _{\mathcal{A}^{s_\rp}} \,, \qquad   \norm{\bv_{[p,q]}} \leq 2^{-\bar s  p}   \norm{ \bv }_{\Sb}  \,.
\eeqn
Then Assumption \ref{ass:benchmark}\eqref{MS} and \eqref{vpqest} yield the sufficient conditions
\beqn
\label{Mpqest}
M_{p,q}= M_{p,q}(\eta) \ge  \Biggl( \frac{ 4 C \min \{ 2^{-\bar s p}  \norm{ \bv  }_{\Sb}  , 2^{-s_\rp q}  \norm{ \pi^{(\rp)}(\bv) }_{\mathcal{A}^{s_\rp}} \}}{ \alpha_{p,q} \, \eta } \Biggr)^{\frac{1}{S}} .
\eeqn
From \eqref{weta assembly} and the decomposition \eqref{decoAv}
we  see that
\beqn
\label{rankw}
  \rank(\bw_\eta) \leq    \sum_{p,q\geq 0}   M_{p,q} 2^p  \,,
  \quad
\#  \supp_\rp(\bw_\eta)  \leq  \sum_{p,q\geq 0 } 3 M_{p,q} 2^q\,.
\eeqn
Note that the factor of $3$ in the bound for $\#  \supp_\rp(\bw_\eta)$ results from the bidiagonal form of the matrices $\bM_j$; that is, the action of each of these matrices can add at most twice the number of nonzero entries in the preimage sequence, in addition to the existing ones. 

The following lemma provides bounds for the right hand sides in \eref{rankw}. 

\begin{lemma}
\label{lemMpq}
For any fixed constant $a>1$ choose
\be
\label{alphapq}
\alpha_{p,q}=c\,\big((1+ p)(1+q)\big)^{-a}, \quad c: = \Big(\sum_{p,q\geq 0} \big((1+ p)(1+q)\big)^{-a}\Big)^{-1},
\ee
as weights in \eref{Mpqest}.
Then for $S\geq  \bar s$ one has
\begin{multline}
	\label{simplep}
\sum_{p,q}2^p M_{p,q}\lesssim \eta^{-\frac 1S}\|\bv\|_{\Sb}^{\frac 1S}\bigl(1+\log_2\#\supp_\rp(\bv)\bigr)^{1+\frac aS}  \\
\times \bigl(1+\log_2\rank(\bv_\eta)\bigr)^{1+\frac aS}\bigl(\rank(\bv_\eta)\bigr)^{1- \frac{\bar s}S},
\end{multline}
where the constant depends on $a,S, \bar s$, on $c$ in \eref{alphapq}, and on $C$ in Assumptions \ref{ass:benchmark}\eref{MS}.

Similarly, for $S \geq s_\rp$ one has 
 \begin{multline} 	
\label{simpleq}
\sum_{p,q}2^q M_{p,q}
\lesssim \eta^{-\frac 1S} \|\pi^{(\rp)}(\bv)\|_{\Acal^{s_\rp}}^{\frac 1S}\bigl(1+ \log_2 \rank(\bv_\eta)\bigr)^{1+\frac aS}  \\
\times \bigl(1+ \log_2\#\supp_\rp(\bv_\eta)\bigr)^{1+\frac aS} \bigl(\#\supp_\rp(\bv_\eta)\bigr)^{1- \frac{s_\rp}S}\end{multline}
with similar dependencies of the constants as before, but with $\bar s$ replaced by $s_\rp$.
\end{lemma}

\begin{proof}
Bounding $M_{p,q}\lesssim \eta^{-\frac 1S} \|\bv\|_{\Sb}^{\frac 1S}(1+q)^{\frac aS}(1+p)^{\frac aS} 2^{-\frac{\bar s p}{S}}$,
we derive
\begin{equation}
\label{pestimate}
\sum_{p,q}2^p M_{p,q}\lesssim \eta^{-\frac 1S}\|\bv\|_{\Sb}^{\frac 1S}\bigl(1+\log_2 \#\supp_\rp(\bv_\eta)\bigr)^{1+\frac aS}\sum_p (1+p)^{\frac aS}
2^{p \left(1-\frac{\bar s}S \right)},
\end{equation}
which gives \eref{simplep},
where the constant depends on $a,S, \bar s$ and $c,C$ from \eref{Mpqest}.

To bound $\sum_{p,q}2^qM_{p,q}$ we use   $M_{p,q}\lesssim \eta^{-\frac 1S}\|\pi^{(\rp)}(\bv)\|_{\Acal^{s_\rp}}^{\frac 1S}
(1+p)^{\frac aS} (1+q)^{\frac aS}2^{-\frac{s_\rp q}S}$ and obtain
 \begin{equation*}
\sum_{p,q}2^q M_{p,q} \lesssim \eta^{-\frac 1S}\|\pi^{(\rp)}(\bv)\|_{\Acal^{s_\rp}}^{\frac 1S}\bigl(1+ \log_2\rank(\bv_\eta)\bigr)^{1+\frac aS}\sum_q (1+q)^{\frac aS}2^{q \left(1- \frac{s_\rp}S \right)}
\end{equation*}
which yields \eref{simpleq}.
 \end{proof}

\newcommand{\rates}{{s_\rs}}

We proceed estimating the various sparsity norms of $\bw_\eta$. We first address rank growth and parametric sparsity, which are independent
of the specific choice of $\tilde\bA_{j,p,q}$. 
Using \eref{rankw} and \eref{simplep} in Lemma \ref{lemMpq} together with \eref{vcoarsened} and \eref{supps}, for $S \geq \bar s$ we obtain 
\begin{align}
\label{rankfirst}
\rank(\bw_\eta) &\lesssim \eta^{-\frac 1S}(1+\abs{\log\eta})^{2\left(1+\frac aS\right)}\|\bv\|^{\frac 1S}_{\Sb}
 \,\eta^{-\frac{1}{\bar s}\left(1-\frac{\bar s}S\right)}\| \bv\|_{\Sb}^{\frac{1}{\bar s}\left(1-\frac{\bar s}S\right)}, \nonumber\\
& = \eta^{-\frac{1}{\bar s}}\| \bv\|_{\Sb}^{\frac{1}{\bar s}} (1+\abs{\log\eta})^{2\left(1+\frac aS\right)}, 
\end{align}
where the constant depends also on $\abs{\log \|\pi^{(i)}(\bv)\|_{\Acal^{s_i}}}$, $i\in \{\rs, \rp\}$.
Now suppose that $N_\eta$ is an upper bound for  $\rank(\bw_\eta)$. 
To simplify the exposition, let us assume without loss of generality that $\eta \in(0,1)$.
Then, by definition,
one has 
\begin{align*}
\|\bw_\eta\|_{\Sb}& = \sup_{N\le N_\eta}N^{\bar s}\inf_{\rank(\bw)\le N}\|\bw_\eta - \bw\|
\le \sup_{B\in [1,\eta^{-1}]}N_{B\eta}^{\bar s}\|\bw_\eta-\bw_{B\eta}\|\\
&\le \sup_{B\in [1,\eta^{-1}]} N_{B\eta}^{\bar s}\big(\|\bw_\eta -\bA \bv_\eta\|+ \|\bA \bv_\eta-\bw_{B\eta}\|\big)\le 
\sup_{B\in [1,\eta^{-1}]} 2B\eta N_{B\eta}^{\bar s}.
\end{align*}
Now we can invoke for each $B\in [1,\eta^{-1}]$ the upper bound for $\rank(\bv_\eta)$ given by \eref{rankfirst},
and observe that the resulting bound is maximized for $B=\eta^{-1}$ when $S\ge \bar s$. This gives
\begin{equation}
\label{stabbars}
\|\bw_\eta\|_{\Sb}\lesssim  \|\bv\|_{\Sb} (1+\abs{\log\eta})^{2\bar s\left(1+\frac aS\right)},
\ee
which  confirms \eref{rankbars}.

 Similarly, using the second estimate in \eref{rankw} and \eref{simpleq} in Lemma \ref{lemMpq} and invoking \eref{supps}
yields, for $S \geq s_\rp$,
\begin{equation}
\label{suppp}
\#\supp_\rp(\bw_\eta)
 \lesssim \eta^{-\frac 1S} \|\pi^{(\rp)}(\bv)\|_{\Acal^{s_\rp}}^{\frac 1S} (1+\abs{\log_2 \eta})^{2+\frac{2 a}S} 
\biggl(\frac{\|\pi^{(\rp)}(\bv)\|_{\Acal^{s_\rp}}}{\eta} \biggr)^{\frac{1}{s_\rp}\left(1- \frac{s_\rp}S\right)} .
\end{equation}
By the same argument as before   one obtains
\be
\label{suppp2}
\#\supp_\rp(\bw_\eta)\lesssim \eta^{-\frac{1}{s_\rp}} \|\pi^{(\rp)}(\bv)\|_{\Acal^{s_\rp}}^{\frac{1}{s_\rp}} (1+\abs{\log_2 \eta})^{2+\frac{2 a}S}.
\ee
We can then continue as above, denoting by $M_\eta$ an upper bound for  $\#\supp_\rp(\bw_\eta)$, to argue  
\begin{align*}
\|\pi^{(\rp)}(\bw_\eta)\|_{\Acal^{s_\rp}} &\le  \sup_{B\in [1,\eta^{-1}]} M_{B\eta}^{s_\rp}\big(\|\pi^{(\rp)}(\bw_\eta)-\pi^{(\rp)}(\bA\bv_\eta)\|
+ \|\pi^{(\rp)}(\bw_{B\eta})-\pi^{(\rp)}(\bA\bv_\eta)\|\big)\\
&\leq
 \sup_{B\in [1,\eta^{-1}]} M_{B\eta}^{s_\rp}\big(\| \bw_\eta - \bA\bv\|
+ \| \bw_{B\eta}- \bA\bv\|\big)\\
&\le   \sup_{B\in [1,\eta^{-1}]} 2B\eta M_{B\eta}^{s_\rp}.
\end{align*}
Thus we obtain 
\be
\label{stabsp}
\|\pi^{(\rp)}(\bw_\eta)\|_{\Acal^{s_\rp}} \lesssim \|\pi^{(\rp)}(\bv)\|_{\Acal^{s_\rp}} (1+\abs{\log_2 \eta})^{2s_\rp\left(1+\frac{a}S\right)},
\ee
which together with \eref{suppp2} shows \eref{rp}.

We now turn to estimating $\#\supp_\rs(\bw_\eta)$ and $\|\pi^{(\rs)}(\bw_\eta)\|_{\Acal^{s_\rs}}$.  
To this end, we specify suitable compressed matrices  $\tilde\bA_{j,p,q}$ in \eqref{etapq}. 
Denoting by $\pi^{(\rs)}(\bv_{[p,q]})_{\ell}$ the best $\ell$-term approximation of $\pi^{(\rs)}(\bv_{[p,q]})$, we set $\Lambda_{p,q,0} := \supp(\pi^{(\rs)}(\bv_{[p,q]})_{1})$ and
\[
\Lambda_{p,q,n}:= \supp\big(\pi^{(\rs)}(\bv_{[p,q]})_{2^n}\big) \setminus \supp\big(\pi^{(\rs)}(\bv_{[p,q]})_{2^{n-1}}\big), \quad n\in\N.
\]
Note that
\[
  \norm{ \Restr{\Lambda_{p,q,n}\times \pidx} \bv_{[p,q]} }
    \leq \norm{  \Restr{\Lambda_{p,q,n}} \pi^{(\rs)}(\bv_{[p,q]}) } \leq 2^{-s_\rs n} \norm{ \pi^{(\rs)}(\bv_{[p,q]})}_{\Acal^{s_\rs}} .
\]
To proceed we employ the following convenient reformulation of Proposition \ref{prop:sparsecompr}.

\begin{remark}\label{rem:sparsecompr}
Let $M \in \N$ and $s < \frac{2\tau}{1+2\tau} S$. Then for any $J\in \N$ we can find $\bA_j^J$, $j\geq 0$, such that 
\begin{equation*}
	\Bignorm{\sum_{j=0}^M  \bigl( \bA_j  -  \bA_j^J \bigr) \otimes \bM_j} \leq \beta_J 2^{-s J},
\end{equation*}
and the following holds: for each $\lambda\in\sidx$, for the sum of the number of corresponding nonzero column entries of the $\bA_j^J$ we have the bound
\begin{equation}\label{sparsecompr_sum}
	\sum_{j = 0}^M \#\supp \,\bigl( \bA_{j,\lambda'\lambda}^J \bigr)_{\lambda'\in\sidx} 
	  \leq \alpha_J 2^J.
\end{equation}
Here $\boldsymbol{\alpha},\boldsymbol{\beta}$ are positive summable sequences.
\end{remark}

For a suitable nonnegative integer $N=N_{j,p,q,\eta}$, let $\tilde\bA_{j,p,q} \coloneqq \sum_{n = 0}^N \bA_j^{N - n} \Restr{\Lambda_{p,q,n}}$ and
\begin{equation}\label{wpq}
 \bw_{p,q} := \sum_{j=0}^{M_{p,q}}(\tilde\bA_{j,p,q} \otimes \bM_j)\bv_{[p,q]}.
\end{equation}
Then
\begin{equation*}
  \Bignorm{\bw_{p,q} - \sum_{j=0}^{M_{p,q}}\big( \bA_j \otimes\bM_j \big)\bv_{[p,q]} } = \Bignorm{\sum_{j=0}^{M_{p,q}} \sum_{n =  0}^N
  \big( (\bA_j^{N-n} - \bA_{j})\Restr{\Lambda_{p,q,n}} \otimes\bM_j\big) \bv_{[p,q]} } .
 \end{equation*}
Using Remark \ref{rem:sparsecompr} with $s=s_\rs$, the right side can be estimated by
 \begin{align*}
    \sum_{n = 0}^N \beta_{N-n} 2^{-s_\rs(N-n)} 2^{-s_\rs n} \norm{ \pi^{(\rs)}(\bv_{[p,q]})}_{\Acal^{s_\rs}} + &2 \norm{\bA} \sum_{n>N}   2^{-s_\rs n} \norm{ \pi^{(\rs)}(\bv_{[p,q]})}_{\Acal^{s_\rs}}   \\
    &\lesssim  2^{-s_\rs N}  \norm{ \pi^{(\rs)}(\bv_{[p,q]})}_{\Acal^{s_\rs}} ,
  \end{align*}
where the constant depends on $s_\rs$, $\norm{\bA}$, and $\norm{ \boldsymbol{\beta} }_{\ell^1}$.
By \eqref{sparsecompr_sum}, we obtain
\begin{equation}\label{supp pq}
 \#\supp_\rs (\bw_{p,q}) \lesssim \sum_{n = 0}^N 2^n \alpha_{N-n} 2^{N-n}  \lesssim 2^N.
\end{equation}
If we now choose the smallest $N$ such that \eqref{etapq} holds,  i.e.,  $2^{-s_\rs N}  \norm{ \pi^{(\rs)}(\bv_{[p,q]})}_{\Acal^{s_\rs}} \lesssim \eta_{p,q}$, we obtain
\begin{equation*}
	 \#\supp_\rs (\bw_{p,q}) \lesssim \eta_{p,q}^{-\frac1{s_\rs}} \norm{ \pi^{(\rs)}(\bv_{[p,q]})}_{\Acal^{s_\rs}}^{\frac1{s_\rs}}
	 \lesssim  \eta_{p,q}^{-\frac1{s_\rs}}\norm{ \pi^{(\rs)}(\bv_{\eta})}_{\Acal^{s_\rs}}^{\frac1{s_\rs}}.
\end{equation*}
Keeping the definition of $\eta_{p,q}=\alpha_{p,q}\eta$ and \eref{vcoarsened}, \eref{supps} in mind, summing over $p,q$
gives \eref{sstab} with $b= 2\big(1+\frac{a}{s_\rs}\big) > 2\big(1+\frac{a}{S}\big)$, where the bound on $\norm{\pi^{(\rp)}(\bw_\eta)}_{\Acal^{s_\rp}}$ follows as in \eqref{stabbars} and \eqref{stabsp}. 
\end{proof}

\begin{remark}
Note that in Assumptions \ref{ass:benchmark}, we state that $S\geq \bar s, s_\rp$ and $S > s_\rs$. While other cases can in principle be considered in the same manner, the convergence rate $S$ of the operator truncation then limits the achievable efficiency: if $S< \bar s$, for instance, it is easy to see that in general one can only obtain $\rank(\bw_\eta) \sim \Ocal(\eta^{-1/S})$.
\end{remark}

\begin{proposition}\label{hsworkest}
	Under the assumptions of Theorem \ref{thmaincompl},
	let $\bv$ be given by its SVD with $r \coloneqq \rank(\bv)$ and $n_i \coloneqq \#\supp_i(\bv)$ for $i\in \{\rs,\rp\}$.
	Then for the number of operations $\ops(\bw_\eta)$ required to obtain $\bw_\eta$, one has
	\begin{multline}
		 \ops(\bw_\eta) \lesssim  (n_\rs +  n_\rp) r^2  
			+ \Bigl( ( 1 + \abs{\log \eta})^{\frac{2a}{s_\rs}} \eta^{-\frac1{s_\rs}}\norm{ \pi^{(\rs)}(\bv)}_{\Acal^{s_\rs}}^{\frac1{s_\rs}}  \\
			    + (1+\abs{\log \eta} )^{\frac {2a}S} \eta^{-\frac1{s_\rp}} \norm{\pi^{(\rp)}(\bv)}^{\frac1{s_\rp}}_{\Acal^{s_\rp}} \Bigr) \eta^{-\frac 1{\bar s}}\|\bv\|_{\Sb}^{\frac 1{\bar s}}.
	\end{multline}
\end{proposition}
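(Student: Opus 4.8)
The plan is to track the operation count through the three stages (S1)--(S3) of $\apply$ separately, showing that (S1) produces the quadratic preprocessing term $(n_\rs+n_\rp)r^2$, the assembly (S3) produces the main term, and (S2) contributes only lower-order quantities that are absorbed. First I would account for (S1): since $\bv$ is supplied by its SVD with $\rank(\bv)=r$ and $\#\supp_i(\bv)=n_i$, the action of $\recompress$ is a mere truncation, but the subsequent $\coarsen_\rp$ destroys orthogonality of the parametric factors, so re-establishing the SVD \eqref{SVD} of $\bv_\eta$ requires orthogonalizing the $n_\rs\times r$ and $n_\rp\times r$ factor matrices and diagonalizing the resulting $r\times r$ core, at cost $\Ocal\bigl((n_\rs+n_\rp)r^2 + r^3\bigr)$; since $r\le\min\{n_\rs,n_\rp\}$ the cubic term is absorbed, giving the first summand. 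The quasi-sorting of $\pi^{(\rp)}(\bv_\eta)$ needed for the sets $\Lambda^{(\rp)}_q$, the evaluation in (S2) of $\norm{\bv_{[p,q]}}$ (which, by orthonormality of the $\bU^{(\rs)}_k$, reduces to the partial sums $\sum_{k\in K_p}\sigma_k^2\norm{\Restr{\Lambda^{(\rp)}_q}\bU^{(\rp)}_k}^2$), and the closed-form choice \eqref{Mpqest} of the $M_{p,q}$ all cost at most $\Ocal\bigl((n_\rs+n_\rp)r\bigr)$ and are dominated by the terms already present.

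Next I would estimate the cost of assembling $\bw_\eta$ from \eqref{weta assembly}, organized blockwise over $(p,q)$. For the spatial applications, $\bv_{[p,q]}$ has rank $\#K_p\le 2^p$, and for each of these spatial vectors the combined application of $\sum_{j=0}^{M_{p,q}}\tilde\bA_{j,p,q}$ costs, by the column-sparsity bound \eqref{sparsecompr_sum} of Remark \ref{rem:sparsecompr} and the telescoping already carried out in \eqref{supp pq}, at most a fixed multiple of $\#\supp_\rs(\bw_{p,q})\lesssim \eta_{p,q}^{-1/s_\rs}\norm{\pi^{(\rs)}(\bv_\eta)}_{\Acal^{s_\rs}}^{1/s_\rs}$; hence the spatial block cost is $\lesssim 2^p\,\#\supp_\rs(\bw_{p,q})$. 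For the parametric applications, each bidiagonal $\bM_j$ applied to a factor supported in $\Lambda^{(\rp)}_q$ (of size $\lesssim 2^q$) costs $\Ocal(2^q)$, carried out for $j\le M_{p,q}$ and the $2^p$ indices $k\in K_p$, so the parametric block cost is $\lesssim 2^p M_{p,q}2^q$.

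It then remains to sum these block contributions, which I would do by reusing the bookkeeping of Lemma \ref{lemMpq}. For the spatial part, inserting $\eta_{p,q}=\alpha_{p,q}\eta/4$ with the weights \eqref{alphapq} and summing $2^p$ over $p\le\log_2\rank(\bv_\eta)$ produces the factor $\rank(\bv_\eta)\lesssim \eta^{-1/\bar s}\norm{\bv}_{\Sb}^{1/\bar s}$ (the only genuinely geometric sum), while the $q$-summation and the weight powers contribute only logarithmic factors; combined with $\eta^{-1/s_\rs}\norm{\pi^{(\rs)}(\bv)}_{\Acal^{s_\rs}}^{1/s_\rs}$ this yields the first summand of the main term. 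For the parametric part I would instead use the a priori bound $M_{p,q}\lesssim \eta^{-1/S}\alpha_{p,q}^{-1/S}\,2^{-s_\rp q/S}\norm{\pi^{(\rp)}(\bv)}^{1/S}$ from \eqref{Mpqest}; since $S\ge s_\rp$, the geometric factor $2^{q(1-s_\rp/S)}$ resolves the $q$-sum into a power of $\#\supp_\rp(\bv_\eta)\lesssim\eta^{-1/s_\rp}\norm{\pi^{(\rp)}(\bv)}^{1/s_\rp}$, the $p$-sum again delivers the $\rank(\bv_\eta)$ factor, and the $\eta$- and norm-powers recombine to give exactly the second summand. Because $\eta$ enters $M_{p,q}$ at rate $S$ rather than $s_\rs$, the two summands carry the two distinct logarithmic orders $(1+\abs{\log\eta})^{2a/s_\rs}$ and $(1+\abs{\log\eta})^{2a/S}$. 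One checks in passing that $\sum_{p,q}2^{p+q}M_{p,q}$ is bounded by $\rank(\bw_\eta)\cdot\#\supp_\rp(\bw_\eta)$ through comparison with the diagonal terms of the product of the two bounds in \eqref{rankw}, so no unexpected quadratic growth appears.

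The main obstacle is the logarithmic bookkeeping: in each double sum over $(p,q)$ one must isolate precisely which summation carries true geometric decay — and therefore collapses to a $\rank$ or a $\#\supp$ factor matching \eqref{rankbars}, \eqref{rp}, \eqref{sstab} — from those that are merely capped at $\log_2\rank(\bv_\eta)$ or $\log_2\#\supp_\rp(\bv_\eta)$ and hence contribute only powers of $\abs{\log\eta}$. Pinning down the exponents $2a/s_\rs$ and $2a/S$ requires keeping the rate $S$ governing the truncation lengths $M_{p,q}$ strictly separate from the rate $s_\rs$ governing the spatial compression, and applying the same ``maximize at $B=\eta^{-1}$'' device used for \eqref{stabbars} and \eqref{stabsp}; all remaining steps are the routine geometric and arithmetic summations already performed in the proof of Theorem \ref{thmaincompl}.
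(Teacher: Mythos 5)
Your overall architecture coincides with the paper's proof, and your summation bookkeeping is sound: the paper likewise charges $\Ocal(r)$ for $\recompress$ and $\Ocal(r(n_\rs+n_\rp))$ for $\coarsen_\rp$, bounds the parametric assembly cost by $\sum_{p,q}2^{p+q}M_{p,q}$ and the spatial assembly cost by $\sum_{p,q}2^p\,\#\supp_\rs(\bw_{p,q})$, and evaluates these with the weights \eqref{alphapq}. Your variant of using the $s_\rp$-decay of $M_{p,q}$ in $q$ while capping the $p$-sum at $\log_2\rank(\bv_\eta)$ (the paper uses the $\bar s$-decay in $p$ and caps the $q$-sum instead) is an equally valid symmetric route to the second summand, since Assumptions \ref{ass:benchmark} provide both $S\geq \bar s$ and $S\geq s_\rp$.

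There is, however, one concrete omission, and it sits exactly where the paper's proof spends most of its effort: you never account for computing the blockwise contractions $\bigl(\pi^{(\rs)}_\nu(\bv_{[p,q]})\bigr)_{\nu}$, which step (S3) requires in order to determine the sets $\Lambda_{p,q,n}$ defining the compressed matrices $\tilde\bA_{j,p,q}$. Your global re-SVD in (S1) does not dispose of this: even with exactly orthonormal $\bU^{(\rp)}_k$, the restriction in \eqref{vpq} destroys orthonormality of the block factors $\Restr{\Lambda^{(\rp)}_{q}}\bU^{(\rp)}_k$ all over again, so with $\mathbf{V}$ the matrix with columns $\sigma_k \Restr{\Lambda^{(\rp)}_{q}}\bU^{(\rp)}_k$, $k\in K_p$, and $\mathbf{\hat u}_\nu = (\bU^{(\rs)}_{k,\nu})_{k\in K_p}$, one has $\abs{\pi^{(\rs)}_\nu(\bv_{[p,q]})}^2 = \mathbf{\hat u}_\nu^T(\mathbf{V}^T\mathbf{V})\mathbf{\hat u}_\nu$ with a nontrivial Gramian $\mathbf{V}^T\mathbf{V}$. (Your cheap formula for $\norm{\bv_{[p,q]}}$ is correct, but it exploits only orthonormality of the \emph{left} factors $\bU^{(\rs)}_k$ and does not extend to these row norms.) The paper computes the Gramian when $q\geq p$ and a QR factorization $\mathbf{V}^T=\mathbf{Q}\mathbf{R}$ when $p>q$, at total cost $\Ocal\bigl((n_\rs+n_\rp)r^2\bigr)$ over all blocks; this — not a re-orthogonalization in (S1), which the paper in fact never performs, tolerating the non-orthonormal restricted factors throughout — is the true source of the first summand. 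Since this Gramian/QR cost has the same order as the re-SVD you charge in (S1), your final bound survives once the step is added, but as written the proof is incomplete at precisely this point. A smaller remark: the ``maximize at $B=\eta^{-1}$'' device you invoke belongs to the $\Acal^{s}$-stability estimates \eqref{stabbars}, \eqref{stabsp} in the proof of Theorem \ref{thmaincompl}, not to the operation count; here the exponents $2a/s_\rs$ and $2a/S$ fall out directly from the weight powers in $\eta_{p,q}^{-1/s_\rs}$ and $M_{p,q}$.
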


For the proof of this proposition, we refer to Appendix \ref{app:thm3}.
With these preparations, we obtain the following complexity estimate for Algorithm \ref{alg:tensor_opeq_solve}.

\begin{theorem}\label{lrcomplexity}
 Let Assumptions \ref{ass:benchmark} hold.
	Then for any $\varepsilon>0$, the approximation $\bu_\varepsilon$  of the form \eqref{fulllr} produced by Algorithm \ref{alg:tensor_opeq_solve} satisfies
	\begin{equation}\label{hsrankbound}
		 \rank(\bu_\varepsilon) \lesssim \varepsilon^{-\frac1{\bar s}} \norm{\bu}_{\Sb}^{\frac1{\bar s}}, \quad \norm{\bu_\varepsilon}_{\Sb} \lesssim \norm{\bu}_{\Sb}
	\end{equation}
	and
	\begin{equation}\label{hssuppbound}
		 \sum_{i\in\{\rs,\rp\}} \#\supp_i (\bu_\varepsilon) \lesssim  \sum_{i\in\{\rs,\rp\}} \varepsilon^{-\frac1{s_i}} \norm{\pi^{(i)}(\bu)}_{\Acal^{s_i}}^{-\frac{1}{s_i}}, \quad 
		 \norm{\pi^{(i)}(\bu_\varepsilon)}_{\Acal^{s_i}} \lesssim \norm{\pi^{(i)}(\bu)}_{\Acal^{s_i}}.
	\end{equation}
	The number of operations $\ops(\bu_\varepsilon)$ required to produce $\varepsilon$ then satisfies
	\begin{equation}\label{hsopbound}
\ops(\bu_\varepsilon) \lesssim  1+ ( 1 + \abs{\log \varepsilon})^\zeta \Bigl( \varepsilon^{-\frac1{\bar s}} \norm{\bu}_{\Sb}^{\frac1{\bar s}} \Bigr)^2  \sum_{i\in\{\rs,\rp\}} \varepsilon^{-\frac1{s_i}} \norm{\pi^{(i)}(\bu)}_{\Acal^{s_i}}^{-\frac{1}{s_i}} ,
	\end{equation}
	where $\zeta>0$ depends on $s_\rs$, on $\operatorname{cond}(\bA)$, and on the choice of $\kappa_1, \beta$ in Algorithm \ref{alg:tensor_opeq_solve}. The constants in \eqref{hsrankbound}, \eqref{hssuppbound}, and \eqref{hsopbound} may also depend on $S$, $\bar s$, $s_\rp$, and on the further parameters of Algorithm \ref{alg:tensor_opeq_solve}.
\end{theorem}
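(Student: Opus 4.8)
The plan is to synthesize the three preceding building blocks—the quasi-optimality of the combined coarsening step (Theorem \ref{lmm:combined_coarsening}), the rate-preserving behavior of $\apply$ (Theorem \ref{thmaincompl}), and the operation count for a single application (Proposition \ref{hsworkest})—into a bound on the full adaptive Richardson loop of Algorithm \ref{alg:tensor_opeq_solve}, following the template of the complexity proof in \cite{BD}. Convergence and termination with $\|\bu-\bu_\varepsilon\|\le\varepsilon$ are already guaranteed by Remark \ref{rem:termination}, so the work is entirely in tracking ranks, supports, and operations across the iteration.

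First I would establish the structural bounds \eqref{hsrankbound} and \eqref{hssuppbound}. At the end of each outer step $k$, the iterate $\bu_{k+1}$ is produced by a combined $\recompress$/$\coarsen$ applied to some $\bw_j$; the inner-loop termination criterion ensures $\|\bu-\bw_j\|\lesssim \delta/2^{k+1}$. Invoking Theorem \ref{lmm:combined_coarsening} with $\eta\sim\delta/2^{k+1}$ (choosing $\kappa_2,\kappa_3$ in the spirit of Theorem \ref{fullsepresult} to match the tolerances there), one obtains $\rank(\bu_{k+1})\lesssim(\delta/2^{k+1})^{-1/\bar s}\|\bu\|_{\Sb}^{1/\bar s}$ and $\#\supp_i(\bu_{k+1})\lesssim(\delta/2^{k+1})^{-1/s_i}\|\pi^{(i)}(\bu)\|_{\Acal^{s_i}}^{1/s_i}$, together with the stability bounds $\|\bu_{k+1}\|_{\Sb}\lesssim\|\bu\|_{\Sb}$ and $\|\pi^{(i)}(\bu_{k+1})\|_{\Acal^{s_i}}\lesssim\|\pi^{(i)}(\bu)\|_{\Acal^{s_i}}$. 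The decisive feature is that these bounds are expressed through the norms of the \emph{exact} solution, not of the previous iterate; evaluating at termination, where $\delta/2^k\sim\varepsilon$, yields \eqref{hsrankbound} and \eqref{hssuppbound}.

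Next I would turn to the operation count \eqref{hsopbound}. Since the error contracts by a fixed factor per outer step, the number of outer steps is $K\lesssim 1+|\log\varepsilon|$, and each runs an inner loop of at most $J$ (fixed) Richardson updates, each consisting of one $\apply$ and one $\recompress$. For the inner iterates at outer level $k$, the input to $\apply$ inherits rank $\lesssim(\delta/2^k)^{-1/\bar s}\|\bu\|_{\Sb}^{1/\bar s}$ and supports $\lesssim(\delta/2^k)^{-1/s_i}\|\pi^{(i)}(\bu)\|_{\Acal^{s_i}}^{1/s_i}$ from the preceding coarsening, up to the bounded rank/support inflation caused by the at most $J$ applications of $\apply$ since that coarsening. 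Feeding these into Proposition \ref{hsworkest} with tolerance $\eta\sim\delta/2^k$, the dominant contribution is the orthogonalization term $(n_\rs+n_\rp)r^2\sim\sum_i(\delta/2^k)^{-1/s_i}(\delta/2^k)^{-2/\bar s}$, augmented by the polylogarithmic factors carried by Theorem \ref{thmaincompl} and Proposition \ref{hsworkest}; the cost of each $\recompress$ is of the same order and hence absorbed. These per-step costs grow geometrically in $k$, so their sum over $k=0,\dots,K$ is dominated by the final term at $\delta/2^K\sim\varepsilon$, producing the $\varepsilon$-dependence and norm factors of \eqref{hsopbound}, namely $\bigl(\varepsilon^{-1/\bar s}\|\bu\|_{\Sb}^{1/\bar s}\bigr)^2\sum_i\varepsilon^{-1/s_i}\|\pi^{(i)}(\bu)\|_{\Acal^{s_i}}^{1/s_i}$; the $O(|\log\varepsilon|)$ steps and the per-application logarithms combine into the single factor $(1+|\log\varepsilon|)^\zeta$.

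The main obstacle I anticipate is ensuring that the polylogarithmic inflation of $\Sb$- and $\Acal^{s_i}$-norms incurred by each $\apply$ (the factors $(1+|\log\eta|)^{\bar s b}$ and $(1+|\log\eta|)^{s_i b}$ in Theorem \ref{thmaincompl}) does not compound multiplicatively across the $O(|\log\varepsilon|)$ outer iterations. The resolution is precisely the norm-resetting property noted above: because the combined $\recompress$/$\coarsen$ step bounds the iterate norms against those of the fixed exact solution $\bu$ rather than against the preceding iterate, the logarithmic overhead of $\apply$ is incurred afresh—but only within a single block of at most $J$ inner steps—before being discarded at the next coarsening. Making this bookkeeping precise, and verifying that $S\ge\bar s,s_\rp$ and $S>s_\rs$ from Assumptions \ref{ass:benchmark}(iii) and (iv) are exactly what is needed for $\apply$ to preserve all three rates, is the delicate part; the geometric summation and constant-chasing are then routine.
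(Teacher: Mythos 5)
Your proposal is correct and follows essentially the same route as the paper's own (very brief) proof: \eqref{hsrankbound} and \eqref{hssuppbound} via Theorem \ref{lmm:combined_coarsening} applied to the output of line \ref{alg:cddtwo_coarsen_line}, and \eqref{hsopbound} by combining Theorem \ref{thmaincompl} and Proposition \ref{hsworkest} with the fixed inner-iteration count $J$ in the standard complexity argument of \cite{BD}. In fact your write-up makes explicit the two points the paper leaves implicit — the norm-resetting against the exact solution at each coarsening, which prevents the polylogarithmic inflation from $\apply$ compounding across outer iterations, and the geometric summation over outer steps being dominated by the final tolerance $\delta/2^K \sim \varepsilon$.
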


\begin{proof}
We follow the general strategy of the proofs as in \cite{BD} and in Theorem \ref{fullsepresult}, combining the properties of the complexity reduction procedures $\coarsen$ and $\recompress$ with the specific adaptive operator application that we have constructed for the present problem.

  The bound \eqref{hsrankbound} and \eqref{hssuppbound} follow from Theorem \ref{lmm:combined_coarsening} applied to the result of line \ref{alg:cddtwo_coarsen_line} in Algorithm \ref{alg:tensor_opeq_solve}.
  Note that here, the number $J$ of inner iterations depends only on $\operatorname{cond}(\bA)$ (via $\rho,\omega$) and on the choice of $\kappa_1$ and $\beta$.
  With the complexity estimates for $\apply$ from Theorem \ref{thmaincompl} and Proposition \ref{hsworkest} at hand, we obtain \eqref{hsopbound}.
\end{proof}

\begin{remark}
{The present version of Algorithm \ref{alg:tensor_opeq_solve} necessarily performs orthogonalizations of the basis vectors in the computed low-rank expansions.
Under the ensuing requirement of common supports, Theorem \ref{lrcomplexity} shows that the output of Algorithm \ref{alg:tensor_opeq_solve} for the format \eqref{fulllr} has optimal {\em representation complexity}. Regarding the computational complexity of the algorithm,
	as can be seen from the proofs of} Theorem \ref{thmaincompl} and Proposition \ref{hsworkest}, the numerical cost for the approximate operator application is dominated by the cost of performing orthogonalizations of the input. In particular, this leads to a quadratic dependence on the approximation ranks, {and up to the logarithmic term, \eqref{hsopbound} represents the best possible bound for an algorithm performing such orthogonalizations.
	The number of subsequent operations required to construct the low-rank representation of the output, however, remains proportional to the respective number of degrees of freedom. }
\end{remark}

\section{Approximability of parametric problems}
\label{sec:approximability}

In this section, we consider representative instances of \eqref{paramdiffusion-0} 
in order to compare the respective properties that determine the efficiency of the variants of our scheme for \eqref{fullnterm}, \eqref{fulllr}, and \eqref{general}.

\subsection{Isotropic dependence on finitely many parameters}\label{sec:approx iso}

As simple yet instructive examples, we consider problems with $\bar{a}=1$ and
\beqn
\label{cm}
   \pf_j = b_j \Chi_{D_j}, 
\eeqn
 where $b_j \in ]0,1[$ are constants and the subdomains $D_j$ of the domain $D$ have disjoint closures so that 
 the diffusion coefficient is a strictly positive piecewise constant,
 \begin{equation} \label{inclusions}
    a(y) = \bar{a} + \sum_{j\geq 1} y_j b_j \Chi_{D_j}.
 \end{equation}

As a first problem of this type with $D = ]0,1[$, we consider the following.

\begin{example}\label{ex1}
 Let $d := \#\mathcal{I} < \infty$, $D_j\subset D=]0,1[$ for $j=1,\ldots,d$ with pairwise disjoint $\overline{D_j}$, and $b_j =\xi$ for some $\xi\in ]0,1[$. 
\end{example}

For low-rank approximation, we then have the following result for the rank of the Hilbert-Schmidt decomposition \eqref{hsapprox}.

\begin{proposition}
In Example \ref{ex1}, for any $f\in V'$, one has $\rank(\bu) \leq 4 d + 1$.
\end{proposition}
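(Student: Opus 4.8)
The plan is to exploit the one--dimensional structure of Example \ref{ex1} to write $u$ explicitly as a short sum of separated terms ``spatial factor times parametric factor'', and then read off the Hilbert--Schmidt rank. Since $\{\psi_\lambda\}_{\lambda\in\sidx}$ is a Riesz basis of $V$, the coefficient isomorphism $V\otimes \spL{2}(\pdom)\to \spl{2}(\sidx\times\pidx)$ is of Kronecker rank one (cf.\ the Remark following \eqref{Arep2}), so $\rank(\bu)$ equals the rank of the operator $T_u$ in \eqref{Tu}. It therefore suffices to produce a representation $u(x,y)=\sum_{k=1}^r \phi_k(x)\,\psi_k(y)$ with $r\le 4d+1$: then $T_u v = \sum_k \phi_k \int_\pdom \psi_k\, v\, d\mu \in \linspan\{\phi_1,\dots,\phi_r\}$ for every $v$, whence $\range(T_u)\subset \linspan\{\phi_1,\dots,\phi_r\}$ and $\rank(\bu)\le r$. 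Note that the $\phi_k$ need not individually lie in $V$ for this conclusion.

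First I would derive the closed form of the solution. Write $a(x,y)=1+\sum_{j=1}^d y_j\,\xi\,\Chi_{D_j}(x)$ and $g(x,y):=a(x,y)^{-1}$, and represent $f\in V'$ by some $w\in \spL{2}(D)$ via $\langle f,v\rangle=\int_0^1 w\,v'\,dx$ for all $v\in V$ (possible since $v\mapsto v'$ maps $H^1_0(0,1)$ isometrically onto the mean--zero subspace of $\spL{2}$). The weak equation $\int_0^1 a\,u'\,v'\,dx=\langle f,v\rangle$ then reads $\int_0^1(a\,u'-w)\,v'\,dx=0$ for all $v\in V$, so $a(x,y)\,u'(x,y)-w(x)$ is $x$--independent, i.e.\ $u'(x,y)=\bigl(w(x)+c(y)\bigr)\,g(x,y)$ for a single scalar $c(y)$. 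Integrating with $u(0,y)=0$ gives
\[
 u(x,y)=c(y)\int_0^x g(t,y)\,dt+\int_0^x w(t)\,g(t,y)\,dt ,
\]
where $c(y)=-\bigl(\int_0^1 wg\bigr)\big/\bigl(\int_0^1 g\bigr)$ is fixed by $u(1,y)=0$ (the denominator is positive by ellipticity).

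The rank count now follows from the separation of $g$. With $D_0:=D\setminus\bigcup_{j}\overline{D_j}$ one has the $(d+1)$--term expansion $g(x,y)=\Chi_{D_0}(x)+\sum_{j=1}^d \tfrac{1}{1+y_j\xi}\,\Chi_{D_j}(x)$, every parametric factor depending on a single coordinate $y_j$. Integrating in $x$ yields $\int_0^x g\,dt=m_0(x)+\sum_j \tfrac{1}{1+y_j\xi}\,m_j(x)$ and $\int_0^x wg\,dt=n_0(x)+\sum_j \tfrac{1}{1+y_j\xi}\,n_j(x)$, with purely spatial functions $m_j(x)=\int_0^x\Chi_{D_j}$ and $n_j(x)=\int_0^x w\,\Chi_{D_j}$, $j=0,\dots,d$. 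Multiplying the first sum by $c(y)$ introduces no new spatial factor, so $u$ lies in $\linspan\{m_0,\dots,m_d,n_0,\dots,n_d\}\otimes \spL{2}(\pdom)$, a space with at most $2d+2$ spatial generators. Hence $\rank(\bu)\le 2d+2\le 4d+1$ for $d\ge 1$, which proves the claim (in fact with room to spare).

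The only genuinely delicate points are conceptual rather than computational: handling general $f\in V'$ is done via the $w$--representation above, which avoids forming a pointwise antiderivative of $f$; and one must notice that the nonlinear factor $c(y)$ is irrelevant to the count, since it merely supplies one further parametric function multiplying spatial factors $m_j$ that are already present. I expect no serious obstacle beyond this bookkeeping; the essential mechanism is that in one spatial dimension the flux $a\,u'$ depends on $y$ only through the single integration constant $c(y)$, which confines all parametric dependence of $u$ to the $(d+1)$--term separation of $g=1/a$.
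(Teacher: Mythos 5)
Your proof is correct, and it takes a genuinely different route from the paper's. The paper (following \cite[Example 2.2]{BC:15}) argues by a local dimension count: the endpoints of the interval subdomains $D_j$ partition $]0,1[$ into at most $2d+1$ subintervals, on each of which the coefficient is constant in $x$, so that $u(y)$ restricted to each such interval $I$ lies in $\linspan\{\Chi_I,\, x\,\Chi_I,\, F\,\Chi_I\}$ with $F''=f$; this confines $u(y)$ to a $y$-independent space of dimension $6d+3$, from which the $2d+2$ $y$-independent continuity and boundary conditions are subtracted to obtain $4d+1$. You instead integrate the flux identity $a\,u'=w+c(y)$ globally and exploit the exact $(d+1)$-term separated expansion of $g=1/a$, arriving at an explicit representation of $u(\cdot,y)$ in the span of the $2d+2$ functions $m_0,\dots,m_d,n_0,\dots,n_d$. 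All the steps check out: $\langle f,v\rangle=\int_0^1 w v'\,dx$ with $w\in \spL{2}$ is the standard statement $f=-w'$ for $f\in H^{-1}$; $a u'-w$ is constant in $x$ because it is orthogonal to all mean-zero functions; $\int_0^1 g\,dt\geq (1+\xi)^{-1}>0$ makes $c(y)$ well defined; and $\rank(\bu)=\rank(T_u)$ via the Kronecker-rank-one coefficient isomorphism, with no need for the individual spanning functions to lie in $V$. Your route buys a sharper bound, $2d+2\leq 4d+1$ for $d\geq 1$ (and in fact $2d+1$, since every element of the range vanishes at $x=1$ while $m_1(1)=\abs{D_1}>0$), it does not require the $D_j$ to be intervals (disjoint measurable sets suffice), and it handles general $f\in V'$ without invoking a pointwise antiderivative. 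What the paper's count buys is robustness of method: it uses only that the solution is locally affine modulo a fixed particular solution, a structure that can persist when no global closed form is available. Both arguments ultimately rest on the one-dimensional mechanism you identify, namely that the flux $a u'$ depends on $y$ only through a single integration constant.
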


\begin{proof}
This follows by the same arguments as in \cite[Example 2.2]{BC:15}: the endpoints of the $D_j$ induce a partition of $]0,1[$ into at most $2d+1$ intervals. For each such interval $I$, for any $F$ such that $F''=f$, we have
$u(y)|_I \in \linspan\{ \Chi_{I} ,x\,\Chi_{I} , F\,\Chi_{I} \}$.
Hence $u(y)$ is contained in a $y$-independent space of dimension $6d+3$ for all $y$. In addition, there are $2d+2$ continuity conditions, independent of $y$, at the interval boundaries, which leaves at most $4d+1$ degrees of freedom.
\end{proof}

We observe on the other hand that the Legendre expansions for this problem involves 
infinitely many nonzero coefficients, that is, the solution map $y\mapsto u(y)$
is not a polynomial in $y$. This can be checked, for example, by considering the Taylor coefficients of $u$.
For any $\nu=(\nu_j)_{j\geq 1}\in \cF$, the coefficients in the Taylor expansion of $u$ are given by
\be
t_\nu(y) = \frac1{\nu!}\partial^\nu u(y), \quad \nu !:=\prod_{j \geq 1}\nu_j !,
\ee
Denoting by $e^j=(0,\dots,0,1,0,\dots)$ the $j$-th Kronecker sequence, differentiating the equation we
find that these coefficients are given by the recursion
\begin{equation}\label{eq:taylor_recursion}
t_\nu(y) := -A(y)^{-1}\sum_{j\in\supp\nu} A_j  t_{\nu - e^j}(y), \quad t_0(y) = A(y)^{-1} f=u(y).
\end{equation}

We now consider the Taylor coefficients of order $n$ in a given variable $j$ at the origin, that is,
\be
t_{n,j}:=t_{ne^j} (0)=\frac 1 {n!}  \partial^{n}_{y_j} u(0).
\ee
As a particular case of \eref{eq:taylor_recursion}, we have
\be
\int_D \bar a \nabla t_{n,j}\cdot \nabla v \,dx =-\int_D \theta_j \nabla t_{n-1,j} \cdot\nabla v \,dx .
\ee
Since $t_{0,j}=u(0)$ is not trivial, there is at least one variable $j$ such that $t_{1,j}$ does not vanish
on $D_j$. Then, taking $v=t_{n-1,j}$ in the above recursion shows by contradiction that 
$t_{n,j}$ does not vanish on $D_j$, for all values of $n\geq 0$. Thus $y\mapsto u(y)$
cannot be a polynomial.
Low-rank approximations thus give substantially faster convergence than
Legendre expansions in this case.

Similar results showing substantial advantages of best low-rank approximations have also been obtained for spatially two-dimensional examples of analogous structure in \cite{BC:15}.
The test problems considered there are of the form \eqref{inclusions} as well, with the coefficients piecewise constant on $D:=]0,1[^2$ and where $D_j$, $j=1,\ldots, d$ are a partition of $D$ into congruent square subdomains. The resulting ``checkerboard'' geometry is illustrated for $d = 16$ in Figure \ref{fig:checkerboard}.

The low-rank approximability of such problems with respect to space-parameter separation has been studied in \cite{BC:15}.
For the case $d=4$ (that is, a $2\times 2$-checkerboard), it is shown in \cite{BC:15} that for each $n\in \N$ one can find $u^\rs_k$, $u^\rp_k$ for $k=1,\ldots,n$ such that for some $c>0$,
\[
  \Bignorm{ u - \sum_{k=1}^n u^\rs_k \otimes u^\rp_k}_{L^2(\pdom,V)} \lesssim e^{- c n}.
\]
Numerical tests indicate that an analogous estimate can be achieved also for geometries of the type shown in Figure \ref{fig:checkerboard} with $d=9,16,25,\ldots$, where $c$ has a moderate dependence on $d$.
Note also that for a hierarchical tensor representation, the ranks of further matricizations enter as well.  We are not aware of any bounds for these additional ranks. The numerically observed decay of the corresponding singular values for different values of $d$ (using a linear dimension tree) are shown in Figure \ref{fig:checkerboardsvds}. {Note that the singular values of the matricization $T^{\{\rs\}}_\bu$ are precisely those in the decomposition \eqref{hsapprox} underlying \eqref{fulllr}.}

\begin{figure}
\centering
\begin{tabular}{cc}
\hspace{6pt}
\begin{tikzpicture}[scale=.9]
\draw[step=1cm,black,thin] (0,0) grid (4,4);
\node at (0.5,0.5) {$D_1$};
\node at (3.5,3.5) {$D_{16}$};
\node at (.5,1.6) {$\vdots$};
\node at (1.5,.5) {$\hdots$};
\node at (1.5,1.6) {$\iddots$};
\node at (2.5,2.6) {$\iddots$};
\node at (2.5,3.5) {$\hdots$};
\node at (3.5,2.6) {$\vdots$};
\end{tikzpicture}	
&
\hspace{9pt}\raisebox{-6pt}{\includegraphics[width=5.5cm]{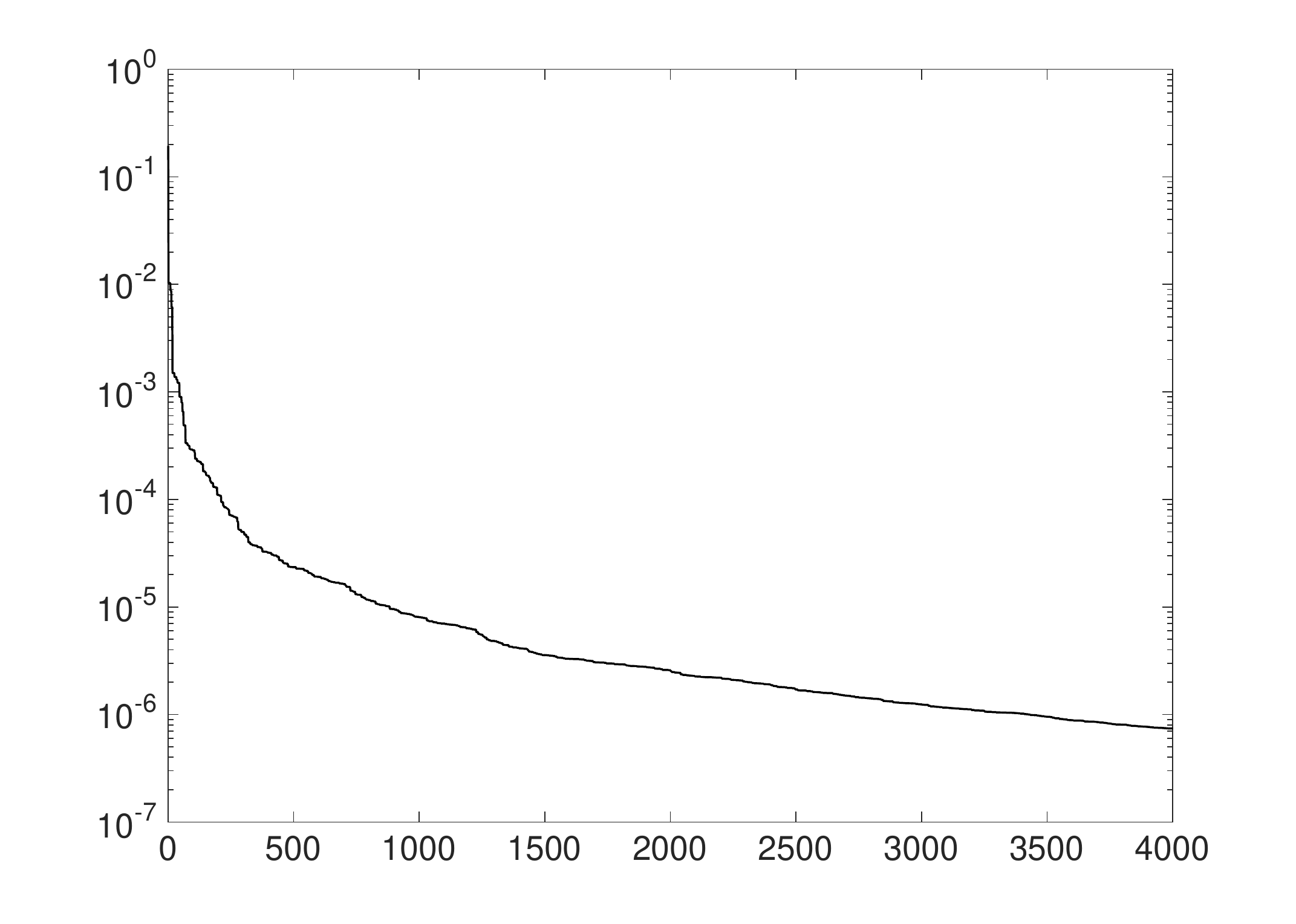}}
\end{tabular}
\caption{Example geometry of piecewise constant coefficients $a$ with $d=16$ and decay of Legendre coefficients of corresponding $u$.}\label{fig:checkerboard}
\end{figure}

\begin{figure}
\centering
\begin{tabular}{ccc}
  \hspace{-18pt}\includegraphics[width=5.3cm]{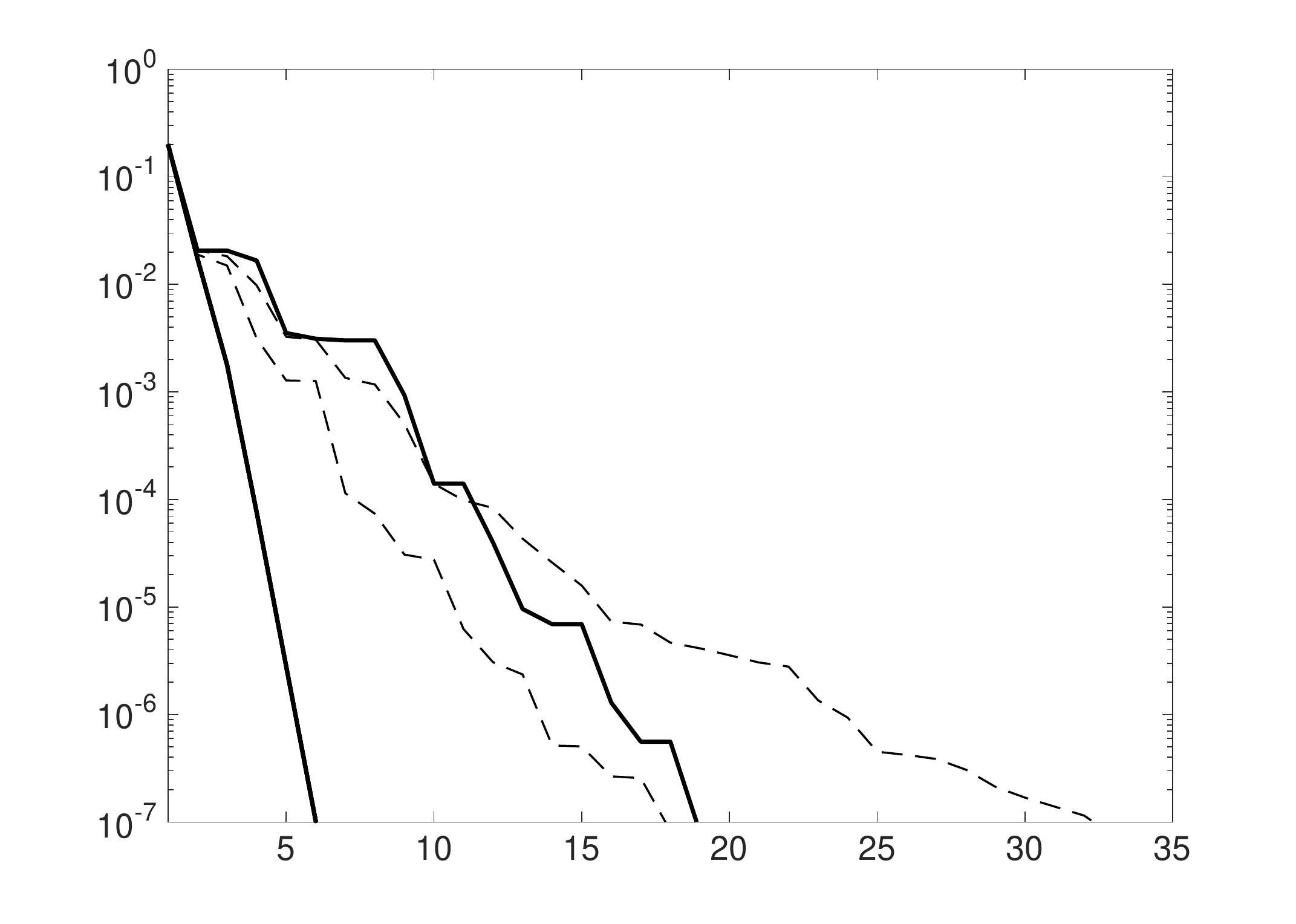} &
  \hspace{-12pt}\includegraphics[width=5.3cm]{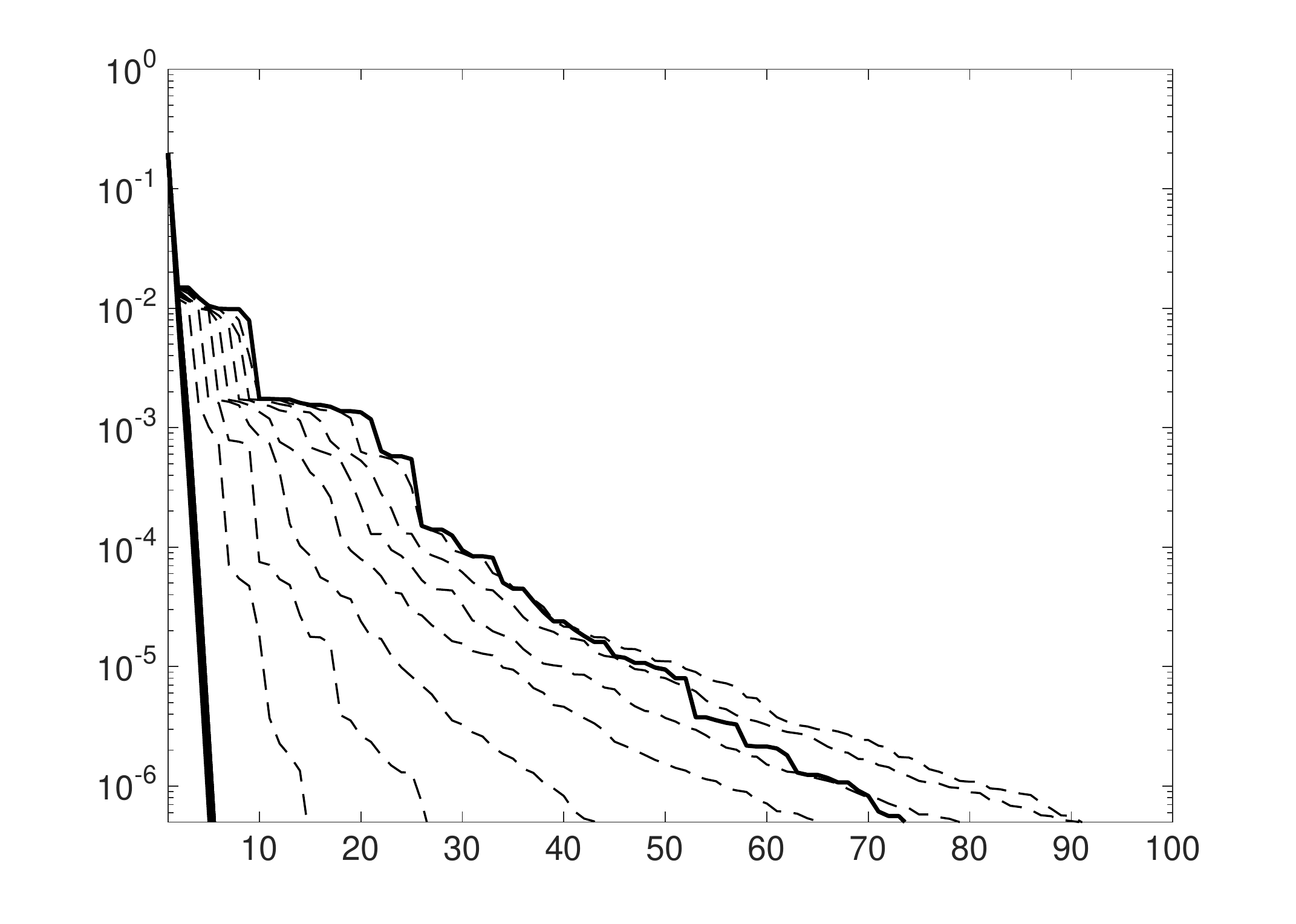} &
  \hspace{-12pt}\includegraphics[width=5.3cm]{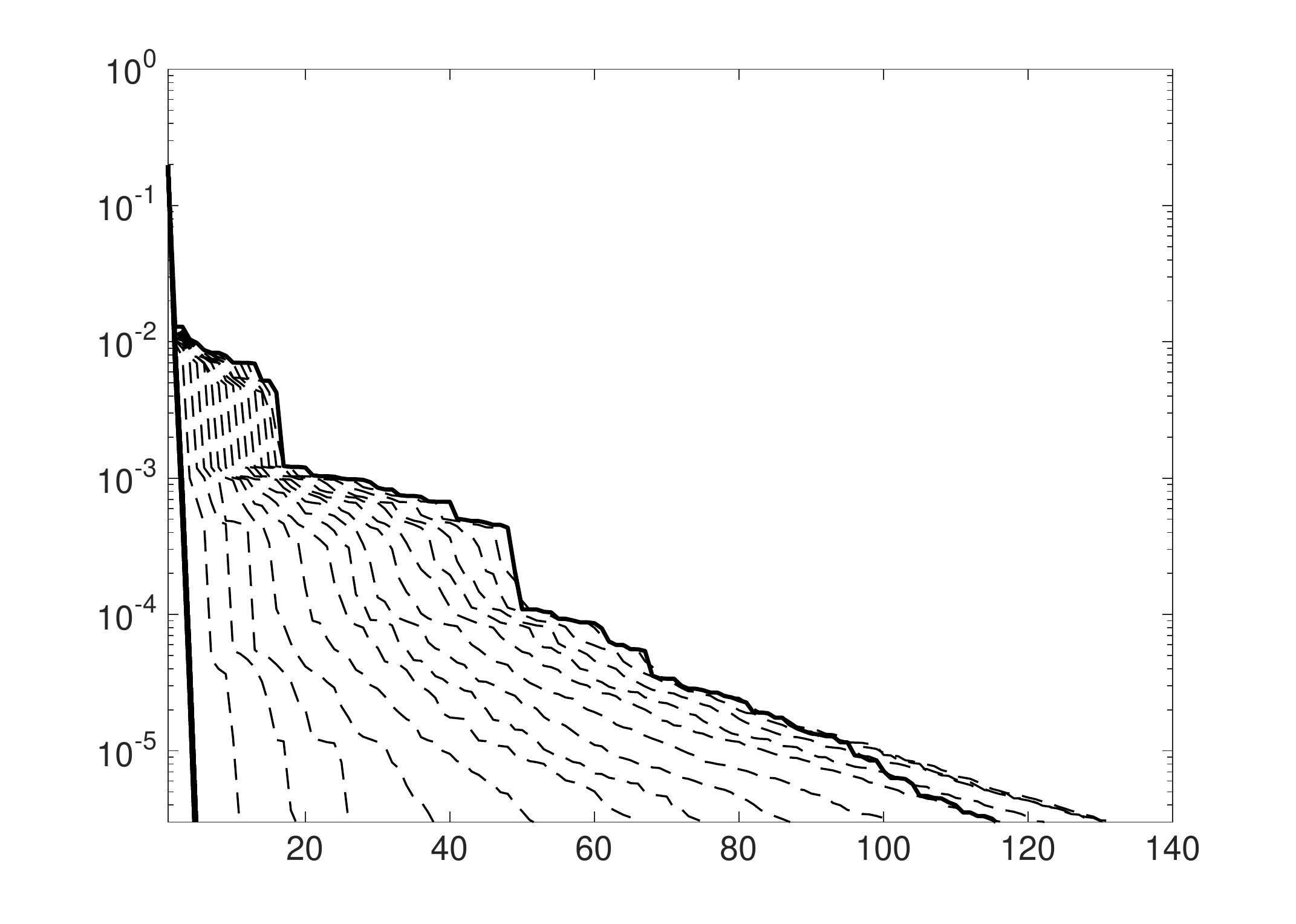}  \\[-3pt]
  	\small $d=4$ & \small $d=9$ & \small $d=16$
\end{tabular}
\caption{Hierarchical singular values of $\bu$, where $D$ has $\sqrt{d}\times\sqrt{d}$-checkerboard geometry as in Figure \ref{fig:checkerboard}. \emph{Solid lines:} singular values of matricizations $T^{(\{i\})}_\mathbf{u}$ associated to $i\in\hat \cI$ (essentially identical for $i\in\cI$, with the line of slower decay corresponding to $i=\rs$), \emph{dashed lines:} singular values of further matricizations in the hierarchical representation. The horizontal axes show the numbers of the decreasingly ordered singular values.} 
\label{fig:checkerboardsvds}	
\end{figure}

\begin{remark}
As we have noted for the spatially one-dimensional case in Example \ref{ex1} in \S\ref{sec:approximability}, for the separation between spatial and parametric variables for that case one always obtains \emph{fixed} finite ranks that grow linearly in the number of parameters $d$. Note, however, that the approximation ranks corresponding to further separations among the parametric variables may then still not be uniformly bounded; see e.g.\ \cite[Prop.\ 2.5]{Khoromskij:11-1} for an analysis of a simple example.
\end{remark}

In all examples considered above, we observe exponential-type decay of singular values. In particular, the numerical results in Figure \ref{fig:checkerboardsvds} indicate that Assumptions \ref{ass:fullbenchmark}, \ref{ass:fullbenchmark dim} are met for this family of problems. In fact, the obtained hierarchical singular values are consistent with the decay $\exp({-\bar c n^{1/\bar b}})$ with some $\bar b > 1$ independent of $d$ and $\bar c >0$ algebraic in $d$. The compressibility of any desired order of the operators $\bA_j$ is known from classical wavelet theory.

In contrast, the decay of Legendre coefficients $\norm{u_\nu} \sim \pi^{(\rp)}_\nu(\bu)$ is significantly slower, where as in Figure \ref{fig:checkerboard} one only observes a decay of the form $\exp({-c n^{1/d}})$ for the $n$-th largest  $\norm{u_\nu}$ as predicted by the available estimates (see, e.g., \cite{BC:15}). This indicates a clear advantage of hierarchical tensor approximations of the form \eqref{general} over sparse polynomial expansions \eqref{fullnterm} for such problems.

\subsection{Anisotropic dependence on infinitely many parameters}\label{sec:approx aniso}

We next consider a problem of the form \eqref{inclusions} with countably many parameters of decreasing influence, where our conclusion are quite different from those concerning Example \ref{ex1}.

\begin{example}\label{ex2}
 Let $\mathcal{I} = \N$, and let $D_j\subset ]0,1[$ be disjoint with $\abs{D_j}>0$ for all $j$. In addition, let $(b_j)_{j\geq 1} \in \spl{q}(\N)$ for some $q>0$. 
\end{example}

As an immediate consequence of the results in \cite[\S4.1]{BCM:2015}, one has the following.

\begin{proposition}
In Example \ref{ex2}, for all right hand sides $f\in V'$, one has $(\norm{u_\nu}_V)_{\nu \in \pidx} \in \spl{p}(\pidx)$ for $p = \frac{2q}{2+q}$. If $(b_j)\notin \spl{q'}(\N)$ for any $0<q'<q$, then there exists $f\in V'$ such that $(\norm{u_\nu}_V)_{\nu \in \pidx} \notin \spl{p'}(\pidx)$ for $0<p'< p$. 
\end{proposition}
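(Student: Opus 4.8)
The plan is to deduce both assertions from the summability theory for Legendre coefficients developed in \cite{BCM:2015}, after recording the two structural features of Example \ref{ex2} that drive the improved exponent: the expansion functions $\pf_j = b_j\Chi_{D_j}$ satisfy $\norm{\pf_j}_{L^\infty(D)} = b_j$, so that $(\norm{\pf_j}_{L^\infty})_{j\geq 1} = (b_j)_{j\geq 1} \in \spl{q}(\N)$, and the supports $D_j$ are pairwise disjoint. First I would verify that these are exactly the hypotheses under which the relevant result of \cite[\S4.1]{BCM:2015} applies, and invoke it to obtain $(\norm{u_\nu}_V)_{\nu\in\pidx} \in \spl{p}(\pidx)$ with $\tfrac1p = \tfrac1q + \tfrac12$, i.e.\ $p = \tfrac{2q}{2+q}$.

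For the upper bound, the mechanism I would emphasize is a combination of two summabilities via Hölder's inequality, which is what produces the gain of $\tfrac12$ over the generic exponent. Writing $g_j := \norm{\nabla u(0)}_{L^2(D_j)}$, disjointness of the $D_j$ gives $\sum_{j} g_j^2 \leq \norm{\nabla u(0)}_{L^2(D)}^2 \lesssim \norm{u(0)}_V^2$, so $(g_j)_j \in \spl{2}$; meanwhile $\langle A_j v, w\rangle = b_j\int_{D_j}\nabla v\cdot\nabla w$ yields $\norm{A_j u(0)}_{V'} \lesssim b_j g_j$, and hence, through the first step of the recursion \eqref{eq:taylor_recursion}, $\norm{u_{e^j}}_V \lesssim b_j g_j$. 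Combining $(b_j)\in\spl{q}$ with $(g_j)\in\spl{2}$ gives $(b_j g_j)\in\spl{p}$ with $\tfrac1p=\tfrac1q+\tfrac12$; the full statement over all $\nu\in\pidx$ (not only the coordinate directions) is handled by the multi-index version in \cite{BCM:2015}, where the disjoint-support structure propagates the same gain to arbitrary $\nu$.

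For the sharpness claim I would make the above Hölder bound tight by a suitable choice of data. The plan is to prescribe $u(0)$ directly and set $f := A_0 u(0) \in V'$: take $u(0) = \sum_{j\geq 1}\beta_j\phi_j$, where each $\phi_j\in \spH{1}_0(D_j)$ (extended by zero) is a fixed bump with $\norm{\phi_j}_V = 1$ and $\beta_j \sim b_j^{q/2}$, which is admissible since $(b_j^{q/2})_j\in\spl{2}$ exactly because $(b_j)\in\spl{q}$. Disjointness gives $g_j = \abs{\beta_j}\sim b_j^{q/2}$, and testing $A_j u(0)$ against $w=\phi_j$ gives the matching lower bound $\norm{A_j u(0)}_{V'}\geq b_j\beta_j \sim b_j^{1+q/2}$, whence $\norm{u_{e^j}}_V\gtrsim b_j^{1+q/2}$. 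Since $\bigl(1+\tfrac q2\bigr)p' < \bigl(1+\tfrac q2\bigr)p = q$ for every $p'<p$, the hypothesis $(b_j)\notin\spl{q'}$ for $q'<q$ forces $\sum_j b_j^{(1+q/2)p'} = \infty$, so $(\norm{u_\nu}_V)_\nu\notin\spl{p'}$.

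The hard part will be turning the heuristic $\norm{u_{e^j}}_V \sim b_j\beta_j$ into a rigorous two-sided estimate. The upper bound is immediate, but for the lower bound I must show that the leading first-order contribution to $u_{e^j} = \sqrt3\int_Y u(y)\,y_j\,d\mu(y)$, namely $\tfrac1{\sqrt3}\partial_{y_j}u(0) = -\tfrac1{\sqrt3}A_0^{-1}A_j u(0)$, is not cancelled by the higher-order terms of the $y_j$-expansion. Here I would use analyticity of $y\mapsto u(y)$ under \eqref{ellip} together with the localized estimate $\norm{A_0^{-1}A_j}_{V\to V}\lesssim b_j$ to bound the order-$\geq 3$ corrections by $O(b_j^{2})\,\norm{\partial_{y_j}u(0)}_V$, which is negligible relative to the first-order term for all sufficiently large $j$ (where $b_j$ is small), while parity in $y_j$ eliminates the even-order corrections. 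This reduces the claim to the clean lower bound $\norm{A_0^{-1}A_j u(0)}_V \geq R^{-1}\norm{A_j u(0)}_{V'} \gtrsim b_j^{1+q/2}$ established above.
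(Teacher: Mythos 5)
Your first part and your choice of data are fine, and in fact coincide with what the paper does: the paper proves the proposition by direct citation of \cite[\S4.1]{BCM:2015}, and your construction $u(0)=\sum_j \beta_j\phi_j$ with $\beta_j\sim b_j^{q/2}$, $f=A_0u(0)$, together with the exponent arithmetic $(1+\tfrac q2)p'<q$, is exactly the construction used there and mirrored in the paper's own proof of Proposition \ref{prop:decaylowerbound} (where $\phi_j$ are hat functions $c_jh_j$). The admissibility check $(b_j^{q/2})\in\spl{2}$ and the lower bound $\norm{A_ju(0)}_{V'}\geq b_j\beta_j$ via testing with $\phi_j$ are correct.

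The genuine gap is in your final paragraph, where you control the higher-order contributions to $u_{e^j}$ perturbatively. Parity in $y_j$ only removes Taylor terms $t_\nu(0)$ with $\nu_j$ even; the coefficient $u_{e^j}=\sqrt3\int_Y u(y)\,y_j\,d\mu(y)$ also receives contributions from all \emph{mixed} terms with $\nu_j$ odd and $\nu_k$ even for $k\neq j$, e.g.\ $\nu=e^j+2e^k$. These are not covered by your bound $O(b_j^2)\norm{\partial_{y_j}u(0)}_V$: the recursion \eqref{eq:taylor_recursion} with the localization $\norm{A_k v}_{V'}\leq b_k\norm{\nabla v}_{L^2(D_k)}$ only gives $\norm{t_{e^j+2e^k}(0)}_V\lesssim b_jb_k^2\beta_k$, so after summing over $k$ the generic estimate for these corrections is of order $b_j\sum_k b_k^2\beta_k$, which does not tend to zero relative to the first-order term $\sim b_j\beta_j=b_j^{1+q/2}$ (the ratio $\sum_k b_k^2\beta_k/\beta_j$ blows up, and for large $q$ the sum need not even converge). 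So the analyticity argument as stated cannot close the lower bound. What rescues your construction is a structural fact you did not invoke: since $a(y)$ is constant on each $D_l$ and the $\overline{D_l}$ are disjoint, one checks directly that $u(y)=\sum_l\beta_l(1+b_ly_l)^{-1}\phi_l$ is the \emph{exact} solution, so every mixed Taylor coefficient vanishes identically and $\partial_{y_j}u(y)=-\beta_jb_j(1+b_jy_j)^{-2}\phi_j$ has a fixed sign. Then, by Rodrigues' formula as in the paper's proof of Proposition \ref{prop:decaylowerbound}, $u_{e^j}=-M_jb_j\beta_j\phi_j$ with $M_j=\frac{\sqrt3}{2}\int_{-1}^1\frac{1-y^2}{(1+b_jy)^2}\frac{dy}{2}$ bounded below by a positive constant uniformly in $j$, giving $\norm{u_{e^j}}_V\gtrsim b_j^{1+q/2}$ with no remainder terms at all. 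Replacing your perturbative paragraph by this exact computation makes the proof complete and is precisely the paper's route.
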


If $\sigma_n$ are the singular values of $u$, then for the decreasing rearrangement $(u^*_n)_{n\geq 1}$ of $(\norm{u_\nu}_V)_{\nu\in\pidx}$ we clearly have $u^*_n \geq \sigma_n$.
As the following new result shows by similar arguments as in \cite[\S4.1]{BCM:2015},  the singular values do not necessarily have faster asymptotic decay in this situation than the ordered norms of the Legendre coefficients.

\begin{proposition}\label{prop:decaylowerbound}
In Example \ref{ex2}, if $(b_j)\notin \spl{q'}(\N)$ for any $0<q'<q$, then there exists an $f \in V'$ such that the singular values of $u$ are not in $\spl{p'}(\N)$ for $0<p' < p = \frac{2q}{2+q}$.
\end{proposition}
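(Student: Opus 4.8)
The plan is to prove the lower bound by \emph{explicitly} exhibiting a right hand side for which the solution separates across the disjoint inclusions $D_j$, so that its Hilbert--Schmidt decomposition \eqref{hsapprox} can be read off directly. Working in the one-dimensional setting of Example~\ref{ex2} with $\bar a = 1$, I would choose $u_0 \in H^1_0(]0,1[)$ whose derivative is supported on $\bigcup_j D_j$, has vanishing mean on each inclusion, $\int_{D_j} u_0' = 0$, and prescribed mass $\int_{D_j}(u_0')^2 = b_j^{\,q}$; such a $u_0$ exists because $\sum_j b_j^{\,q} = \norm{(b_j)}_{\spl{q}}^q < \infty$ by Example~\ref{ex2}, which guarantees $u_0' \in L^2$ and hence $u_0 \in H^1_0$. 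I then set $f := A_0 u_0 \in V'$, so that $u_0 = A_0^{-1} f$.

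The key structural step is to verify that, for this $f$, the solution has the exact separated form
\[
  u(y) = u_0 + \sum_{j \geq 1} \phi_j(y_j)\, w_j, \qquad \phi_j(t) := \frac{-b_j t}{1 + b_j t},
\]
where $w_j \in H^1_0$ is supported on $D_j$ with $w_j' = \Chi_{D_j} u_0'$. This follows by substitution: on each $D_j$ one has $a(y) = 1 + b_j y_j$ and $u'(y) = (1+b_jy_j)^{-1} u_0'$, so the flux $a(y)u'(y) = u_0'$ is independent of $y$ on the whole interval, whence $-(a(y)u'(y))' = -u_0'' = f$. Equivalently, in the recursion \eqref{eq:taylor_recursion} the normalization $\int_{D_j}u_0' = 0$ forces every univariate coefficient $t_{ne^j}$ to have derivative $(-b_j)^n \Chi_{D_j} u_0'$, while disjointness makes $A_{j'}$ annihilate any coefficient living on $D_j$ with $j'\neq j$; hence all \emph{mixed} coefficients vanish and only the univariate branches survive.

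With the separated form in hand I would extract the singular values. The spatial factors $w_j$ are mutually orthogonal in $V$ (disjoint supports) with $\norm{w_j}_V = b_j^{\,q/2}$, while the centred parametric factors $\tilde\phi_j := \phi_j - \int \phi_j\,d\mu$ are mutually orthogonal in $L^2(\pdom)$ and orthogonal to the constants (each depends on a single $y_j$ and has mean zero). Writing $u = \bar u\otimes 1 + \sum_j \hat\phi_j \otimes (s_j \hat w_j)$ with orthonormal systems $\{1,\hat\phi_j\}$, $\{\hat w_j\}$ and $s_j := \norm{\tilde\phi_j}_{L^2(\pdom)}\norm{w_j}_V$, the operator $T_\bu^* T_\bu$ is, in the basis $\{1,\hat\phi_j\}$, a rank-one bordering of $\operatorname{diag}(s_j^2)$; Cauchy interlacing (deleting the row and column of the constant) then gives $\sigma_k(\bu) \geq s_{(k)}$, the $k$-th largest of the $s_j$. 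Since $b_j$ is bounded away from $1$ by \eqref{uea} and $b_j\to 0$, one has $\operatorname{Var}(\phi_j)\gtrsim b_j^2$ uniformly, so $s_j \gtrsim b_j\cdot b_j^{\,q/2} = b_j^{\,1+q/2}$. As $p = \tfrac{2q}{2+q}$ satisfies $(1+\tfrac q2)p = q$, for any $p' < p$ the exponent $(1+\tfrac q2)p'$ is strictly below $q$, and the hypothesis $(b_j)\notin\spl{q'}$ for all $q'<q$ yields $\sum_j s_j^{p'}\gtrsim \sum_j b_j^{(1+q/2)p'} = \infty$; hence $(\sigma_k(\bu))_k \notin \spl{p'}$, as claimed.

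I expect the main obstacle to be the second step: justifying the exact separated solution, and in particular that the disjoint supports together with the zero-mean normalization $\int_{D_j}u_0' = 0$ make all mixed Taylor coefficients vanish, so that the $s_j$ are genuine singular-value contributions rather than merely diagonal Gram entries. A secondary point requiring care is that the $y$-independent mean term $\bar u\otimes 1$ is \emph{not} orthogonal to the $w_j$ in $V$; this is exactly what the interlacing argument absorbs, converting the explicit decomposition into a clean lower bound on each $\sigma_k(\bu)$ and thereby matching, via $\sigma_k(\bu)\le \pi^{(\rp)}_{\nu^*_k}(\bu)$ in \eqref{svpibound}, the slow decay of the ordered Legendre-coefficient norms.
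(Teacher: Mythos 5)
Your proof is correct, and while it constructs essentially the same family of right-hand sides as the paper---your $u_0$ with zero-mean flux $\int_{D_j}u_0'\,dx=0$ and $\int_{D_j}(u_0')^2\,dx = b_j^{\,q}$ reduces, for the choice $u_0=\sum_j c_j h_j$ with hat functions $h_j$ and $c_j = b_j^{q/2}\sqrt{\abs{D_j}}$, exactly to the paper's $f=-\sum_j c_j h_j''$---the extraction of the singular-value lower bound takes a genuinely different route. The paper never writes $u(y)$ in closed form: it bounds $\sigma_k(u)$ from below by the singular values of $\tilde u = (\id\otimes\tilde P)u$, where $\tilde P$ projects onto $\overline{\operatorname{span}}\{L_{e^j}\}$, and computes the first-order Legendre coefficients $u_{e^j}$ via the Taylor recursion \eqref{eq:taylor_recursion} and Rodrigues' formula \eqref{rodrigues2}; orthogonality of the $u_{e^j}$ (disjoint supports) then makes the singular values of $\tilde u$ explicit. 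You instead solve the problem exactly, $u(y) = u_0 + \sum_j \phi_j(y_j)w_j$ with $\phi_j(t)=-b_jt/(1+b_jt)$---your constant-flux verification is sound, and the boundary condition at $x=1$ is precisely where the normalization $\int_{D_j}u_0'\,dx=0$ enters---and then remove the non-orthogonal mean term by interlacing. That step is valid, and is in fact the same mechanism as the paper's projection: composing $T_u$ with the orthogonal projection onto $\overline{\operatorname{span}}\{\hat\phi_j\}$ yields an operator with singular values exactly $\{s_j\}$, dominated by those of $T_u$; you use the centered non-polynomial factors $\hat\phi_j$ where the paper uses $L_{e^j}$, which are orthogonal to constants automatically and so need no interlacing. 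What your route buys: the exact separated structure of $u$ becomes transparent, no Rodrigues/Taylor computation is needed, and the variance bound $\operatorname{Var}(\phi_j)\gtrsim b_j^2$ holds for \emph{all} $b_j\in\,]0,1[$ via $\abs{\phi_j'(t)} = b_j(1+b_jt)^{-2}\geq b_j/4$ (you do not actually need \eqref{uea} for this, nor $b_j\to 0$). What the paper's route buys: it stays within the Legendre-coefficient framework used throughout, which is also what makes the "same decay as the ordered $\norm{u_\nu}_V$" interpretation immediate. Two minor points to tighten in a final write-up: justify convergence of $\sum_j\phi_j(y_j)w_j$ in $L^2(\pdom,V)$ (orthogonality of the $w_j$ in $V$ plus $\abs{\phi_j}\lesssim b_j$ and $\sum_j b_j^{2+q}\leq\sum_j b_j^q<\infty$), and state the interlacing inequality you invoke for compact self-adjoint operators rather than matrices; the exponent bookkeeping $(1+\tfrac q2)p=q$ is exactly as in the paper.
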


\begin{proof}
We first observe that the singular values of $u = \sum_{\nu\in\pidx} u_\nu\otimes L_\nu$ are bounded from below by those of $\tilde u =\sum_{j\geq 1} u_{e^j} \otimes L_{e^j}$, with $e^j$ denoting the $j$-th Kronecker sequence. This follows from the fact that $\tilde u = (\id \otimes \tilde P) u$, where $\tilde P$ is the projector onto $\overline{\operatorname{span}}\{L_{e^j}\}_{j\geq 1}$.

For $u_{e^j}$, one has by Rodrigues' formula the explicit representation
\begin{equation}\label{rodrigues2}
   u_{e^j} =  \frac{\sqrt3}{2} \int_Y t_{e^j}(y)\, (1 - y_j^2) \,d\mu(y)
\end{equation}
in terms of the first-order derivatives $t_{e^j}(y)=\partial_{y_j} u(y)$.

Let $h_j$ be the symmetric hat functions with support $D_j$. We now choose
\[ 
   f= -\sum_{j\geq 1} c_j h_j'',
\]
where $\sum_{j\geq 1} c_j^2 / \abs{D_j} < \infty$, which yields $f\in V'$ and 
\[ 
   t_0(y) = \sum_{j\geq 1} (1 + b_j y_j)^{-1} c_j h_j.
\]
By \eqref{eq:taylor_recursion}, $  t_{e^j}(y) = -(1 + b_j y_j)^{-2} b_j c_j h_j$
and as a consequence of \eqref{rodrigues2},
\[
   u_{e^j} =  -M_j  b_j c_j h_j, \qquad M_j :=  \frac{\sqrt3}{2}  \int_{-1}^1 \frac{1-y^2}{(1+b_j y)^2}\frac{dy}{2} \geq \frac1{4\sqrt{3}(1-\max_j b_j)} =: M_0.
\]
We thus obtain $ \langle u_{e^i}, u_{e^j}\rangle_V = 0$ for $i\neq j$,
as well as
\[
  \norm{u_{e^j}}_V \geq M_0 b_j c_j \norm{h_j}_V = \frac{2 M_0  b_j c_j}{\sqrt{\abs{D_j}}}.
\]
Since $(b_j)$ is precisely in $\spl{q}(\N)$, by choosing $c_j = b_j^{q/2} \sqrt{\abs{D_j}}$, which guarantees in particular that $(c_j/\sqrt{\abs{D_j}})_{j\geq 1} \in \spl{2}(\N)$ as required, we arrive at the statement.
\end{proof}

The above result shows that from an asymptotic point of view, in Example \ref{ex2}, there is not necessarily any gain by low-rank approximation: there always exist right hand sides $f$ such that the singular values 
have \emph{precisely the same} asymptotic decay as the ordered norms of Legendre coefficients.
Numerical tests as in Example \ref{ex:cfdecay} indicate that this also holds true for problems with different types of parametrization and more general $f$.

\begin{remark}
The conclusion of Proposition \ref{prop:decaylowerbound}
reveals that, in the case of Example \ref{ex2} and if $(b_j)_{j\geq 1}\notin\ell^{q'}$ for all $0<q'<q$, 
then any separable approximation 
of the form \eqref{basiclrapprox} satisfies
\be
 \|u-u_n\|_{L^2(Y,V)}\geq c_r n^{-r}, \quad n\geq 1,
\ee
for some $c_r>0$, whenever $r>\frac 1 q$. In turn, we also have
\be
\|u-u_n\|_{L^\infty(Y,V)} \geq  c_r n^{-r}, \quad n\geq 1.
\ee
This implies that the Kolmogorov $n$-width
\be
d_n({\cal M})_V=\inf_{\dim(E)\leq n} \max_{v\in {\cal M}}\operatorname{dist}(v,E)_V,
\ee
of the solution manifold ${\cal M}:=\{u(y)\; : \; y\in Y\}$ satisfies a similar lower bound
\be
d_n({\cal M})_V\geq c_r n^{-r}, \quad n\geq 1.
\ee
While upper bounds for $d_n({\cal M})_V$ in parametric PDEs are typically proved by exhibiting
a particular separable approximation and studying its convergence in $L^\infty(Y,V)$,
see \cite{BC:15,CDacta:15}, lower bounds are generally out of reach and the ones given above
constitute a notable exception.
\end{remark}

\begin{remark}
One arrives at analogous observations in similar higher-dimensional settings. The construction of Example \ref{ex2} immediately carries over to spatial domains with $m>1$ when the definition of $f$ is based on higher-dimensional hat functions.
\end{remark}

We next summarize the available knowledge on approximability of $\bu$ by representations \eqref{fullnterm} and \eqref{fulllr}.

\begin{proposition}\label{propapproxsummary}
	Let Assumptions \ref{ass:multiscale} hold with $\alpha \notin\N$ and let $\xi_\mu \in C^{\kappa}(D)$, $\mu\in \Lambda$, for a $\kappa>\alpha$. Then the following holds:
	\begin{enumerate}[{\rm(i)}]
		\item $\pi^{(\rp)}(\bu) \in \Acal^s(\pidx)$ for any $s < s^*_\rp := \frac\alpha{m}$.
		\item For sufficiently regular $f$ and $D$, and sufficiently regular wavelets $\psi_\lambda$, 
	\[
	\pi^{(\rs)}(\bu) \in \Acal^s(\sidx)\quad\text{for any}\quad
   s < s^*_\rs \coloneqq \frac\alpha{m}.
   \]
   \item 
   $\bu \in \Sigma^{\bar s}$ for a $\bar s \geq \max\{s_\rs, s_\rp\}$.
   \item If $0<\alpha \leq 1$, then $\bu \in \Acal^s(\sidx\times\pidx)$ for any $s < \frac23\alpha$ when $m=1$ and for any $s < \frac\alpha{m}$ when $m=2,3$.
	\end{enumerate}
\end{proposition}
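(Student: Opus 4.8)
The plan is to establish the four assertions in the order (i), (ii), (iii), (iv): the first three are marginal statements that I would read off from the multilevel structure and from uniform spatial regularity, whereas (iv) is the genuinely delicate joint statement and the main obstacle. For (i) I would exploit the precise tuning $c_{\mu_j}=2^{-\alpha\abs{\mu_j}}$ in Assumptions \ref{ass:multiscale}. Counting the $\xi_\mu$ by level, there are $\sim 2^{m\ell}$ indices with $\abs{\mu}=\ell$, so $\sum_j c_{\mu_j}^q\sim\sum_\ell 2^{(m-\alpha q)\ell}$ converges exactly for $q>m/\alpha$; hence $(c_{\mu_j})_j\in\ell^q$ for every $q>m/\alpha$. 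Since $\norm{A_j}_{{\cal L}(V,V')}\lesssim\norm{\theta_j}_{L^\infty(D)}=c_{\mu_j}$, this summability feeds into the multilevel refinement of the Cohen--DeVore--Schwab analysis in \cite[\S4.1]{BCM:2015} (the mechanism behind the proposition following Example \ref{ex2}), upgrading $(c_{\mu_j})\in\ell^q$ to $(\norm{u_\nu}_V)_\nu\in\ell^p$ with $p=\tfrac{2q}{2+q}$. As $\pi^{(\rp)}_\nu(\bu)\sim\norm{u_\nu}_V$ and $\ell^p$-membership yields best $n$-term rate $1/p-1/2=1/q$, letting $q\downarrow m/\alpha$ gives $\pi^{(\rp)}(\bu)\in\Acal^s$ for every $s<\alpha/m$.

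For (ii) I would argue via uniform-in-$y$ spatial regularity. The scaling $c_{\mu_j}=2^{-\alpha\abs{\mu_j}}$ is chosen precisely so that $\norm{\theta_j}_{C^\alpha(D)}\sim c_{\mu_j}2^{\alpha\abs{\mu_j}}\sim 1$; together with the bounded overlap of the $\xi_\mu$ within each level, the scale separation across levels, and $\xi_\mu\in C^\kappa$ with $\kappa>\alpha$, $\alpha\notin\N$, this gives $a(y)\in B^{\alpha}_{\infty,\infty}(D)$ uniformly in $y\in Y$. For sufficiently regular $f$ and $D$, Schauder/Besov regularity for divergence-form elliptic equations then yields $u(y)\in B^{1+\alpha}$ uniformly, in the nonlinear-approximation Besov scale adapted to $V=H^1_0(D)$. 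Since $\pi^{(\rs)}_\lambda(\bu)^2=\int_Y\abs{w_\lambda(y)}^2\,d\mu(y)$, with $w_\lambda(y)$ the $\lambda$-th wavelet coefficient of $u(y)$, this uniform regularity transfers to the $L^2(Y)$-averaged coefficient sequence, and standard best $n$-term wavelet theory (for sufficiently regular $\psi_\lambda$) gives $\pi^{(\rs)}(\bu)\in\Acal^s$ for every $s<\alpha/m$. Assertion (iii) is then immediate from \eqref{svpibound}: since $\sigma_k(\bu)\le\min\{\pi^{(\rs)}_{\lambda^*_k}(\bu),\pi^{(\rp)}_{\nu^*_k}(\bu)\}$, the tail $(\sum_{k>N}\sigma_k(\bu)^2)^{1/2}$ is controlled by the smaller of the two rearranged contraction tails, whence $\bu\in\Sigma^{\bar s}$ for some $\bar s\ge\max\{s_\rs,s_\rp\}$.

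The crux is (iv), where the marginal information from (i)--(ii) no longer suffices, since the contractions see the two variables only separately whereas $\Acal^s(\sidx\times\pidx)$ demands joint sparsity of the full array $(\bu_{\lambda,\nu})$. I would construct an explicit competitor by first truncating the Legendre expansion to an active set $\Lambda_\rp$ and then approximating each retained $u_\nu$ by a spatially adapted wavelet set, allocating $N_\nu$ spatial degrees of freedom per $\nu$ and balancing $\sum_\nu N_\nu$ against the total $L^2(Y,V)$-error by a greedy/Lagrange argument. The decisive quantity is the growth of the \emph{spatial} best $n$-term (quasi-)norm of $u_\nu$ relative to $\norm{u_\nu}_V$: one must track, through the recursion \eqref{eq:taylor_recursion}, how products $\theta_{j_1}\cdots\theta_{j_k}$ of the localized, increasingly oscillatory factors degrade the spatial smoothness of $u_\nu$ as $\nu$ activates higher levels, including the combinatorics of which such products have overlapping supports. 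This simultaneous balancing of the spatial and parametric multilevel hierarchies is the main obstacle. Carrying it out in the nonlinear-approximation Besov scale reveals a dimension-sensitive interaction: for $m=2,3$ the spatial sparsity norms remain comparable to the energy norms and the full joint rate $\alpha/m$ survives, whereas for $m=1$ the tighter coupling between scales forces a loss, leaving only $s<\tfrac{2}{3}\alpha$.
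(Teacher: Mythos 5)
Your parts (i)--(iii) essentially track the paper, but part (iv) is a genuine gap: you have written a plan, not a proof, and it stops exactly where the work begins. The paper itself does not prove (iv) internally --- it cites \cite[\S8]{BCDS:17}, where this occupies a full section --- and your sketch supplies none of the quantitative content that section provides. You correctly identify the decisive quantity (the growth of the spatial $n$-term quasi-norms of the Legendre coefficients $u_\nu$ relative to $\norm{u_\nu}_V$ as $\nu$ activates higher parameter levels, to be balanced against the parametric decay), but you give no estimate of that growth, no counting of the interactions through the recursion \eqref{eq:taylor_recursion}, and hence no derivation of either exponent: nothing in your argument produces the factor $\tfrac23$ for $m=1$, and the assertion that ``for $m=2,3$ the spatial sparsity norms remain comparable to the energy norms'' is precisely the statement requiring proof, not a step in one. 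Note also that the paper's discussion after Example \ref{ex:cfdecay} emphasizes that the sharpness of the $m=1$ loss is surprising even against the $m=2,3$ case, so this dichotomy cannot be waved through by appeal to a ``dimension-sensitive interaction.''

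Two smaller points on (i) and (ii). For (i), your conclusion and reference match the paper's (it cites \cite[Cor.\ 4.2]{BCM:2015}), but the hypothesis you actually verify --- $(c_{\mu_j})_j\in\ell^q$ for $q>m/\alpha$ --- is not what drives that result: for $\alpha<m$ one has $q>1$, a regime in which plain $\ell^q$ summability of $\norm{\pf_j}_{L^\infty}$ yields nothing via the classical Cohen--DeVore--Schwab mechanism. The multilevel refinement runs on a \emph{weighted} uniform ellipticity condition, $\sup_{x}\sum_j \rho_j\abs{\pf_j(x)}$ bounded relative to $\bar a$ with $(\rho_j^{-1})\in\ell^q$, which is checkable here only because of the finite overlap of the $\xi_\mu$ within each level (take $\rho_j = 2^{\beta\abs{\mu_j}}$ with $\beta<\alpha$); that is the condition you should verify, not $\ell^q$ membership of the sup norms. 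For (ii), your detour through the ``nonlinear-approximation Besov scale'' is both unjustified (H\"older coefficients do not give $B^{1+\alpha}$ regularity in the nonlinear scale beyond what the linear scale already embeds) and unnecessary: the paper stays in the linear Sobolev scale, using $\norm{u(y)}_{H^{1+s}}\lesssim \norm{f}_{H^{-1+s}}$ uniformly in $y$ together with the weighted $\ell^2$ characterization of $H^{1+s}$ in the wavelet basis, so that level truncation already yields the rate; moreover, the uniformity in $y$ of the regularity constant, which you assert in passing, is the one point the paper flags as nontrivial, requiring inspection of the proofs in \cite{CST:13,TSGU:13}. Your (iii), via \eqref{svpibound} and the tail bounds encoded in $\Acal^{s_i}$, coincides with the paper's argument.
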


\begin{proof}
Statement (i) follows directly from 	\cite[Cor.\ 4.2]{BCM:2015}, since $\pi^{(\rp)}(\bu) \sim \norm{u_\nu}_V$; (iii) follows immediately from properties of the Hilbert-Schmidt decomposition; and (iv) is shown in \cite[\S8]{BCDS:17}.

To see (ii), note first that by regularity of the $\xi_\mu$ and since $\alpha \notin \N$, we have $a(y) \in B^\alpha_{\infty,\infty}(D)= C^\alpha(D)$ for any $y\in\pdom$ and $\sup_{y\in\pdom} \norm{a(y)}_{C^\alpha} < \infty$.
Let $0 < s < \frac\alpha{m}$, and let $\psi_\lambda$ be sufficiently smooth to form a Riesz basis of $H^{1+s}(D)$. Then
\[
    \sum_{\lambda \in \sidx} 2^{2s \abs{\lambda}} \abs{\pi^{(\rs)}_\lambda(\bu)}^2 = \sum_{\lambda\in\sidx}\sum_{\nu\in\pidx} 2^{2s\abs{\lambda}} \abs{\bu_{\lambda\nu}}^2 \sim \int_\pdom\norm{u(y)}_{H^{1+s}(D)}^2\,d\mu(y).
\]
By \cite[Thm.\ 9.1.16]{Hackbusch:92}, using regularity of $f$ and $D$, we have $\norm{u(y)}_{H^{1+s}} \lesssim \norm{f}_{H^{-1+s}}$ uniformly in $y$ for any $s < \alpha/m$.
Here uniformity in $y$ can be seen by inspection of the proof, see \cite{CST:13,TSGU:13}. 
\end{proof}

Let us next illustrate the above estimates by a numerical test for $m=1$ that confirms that in general, this is indeed the best that one can expect.

\begin{example}\label{ex:cfdecay}
We consider $m=1$ with $D=]0,1[$, $\bar a = 1$, $f=1$ and
\[
    \theta_j(x) = c_\alpha 2^{-\alpha\ell} h(2^\ell x - k), \quad  j = 2^\ell + k
\]
for $\ell\geq 0$ and $k = 0,\ldots,2^\ell-1$, where $h(x) = (1 - \abs{2x-1})_+$ and $c_\alpha$
is chosen so as to ensure uniform ellipticity. In other words, the parameter is expanded in a Schauder hat function basis.
As the spatial wavelet basis $\psi_\lambda$, we use piecewise polynomial multiwavelets \cite{DGH:99}.
 Figure \ref{fig:cfdecay} shows the resulting observed decay of the decreasing rearrangements of $\abs{\bu_{\lambda,\nu}}$, $\pi^{(\rs)}_\lambda(\bu)$, $\pi^{(\rp)}_\nu(\bu)$, and of the singular values $\sigma_k(u)$ as in \eqref{Tu} (which satisfy $\sigma_k(u)\sim\sigma_k(\bu)$, where $\sigma_k(\bu)$ are the singular values of $\bu$ as in \eqref{hsapprox}). 
Here, we focus on $0<\alpha \leq 1$. By Proposition \ref{propapproxsummary}, we  expect $\abs{\bu_{\lambda,\nu}}$  in Figure \ref{fig:cfdecay}(a) to decay at approximately the rate $\frac23\alpha + \frac12$,  the values $\pi^{(\rs)}_\lambda(\bu)$, $\pi^{(\rp)}_\nu(\bu)$, and $\sigma_k(u)$ in Figure \ref{fig:cfdecay}(b),(c),(d) to decay at the rate $\alpha + \frac12$.
\end{example}

\begin{figure}
\centering
	\begin{tabular}{cc}
	 \includegraphics[width=6.5cm]{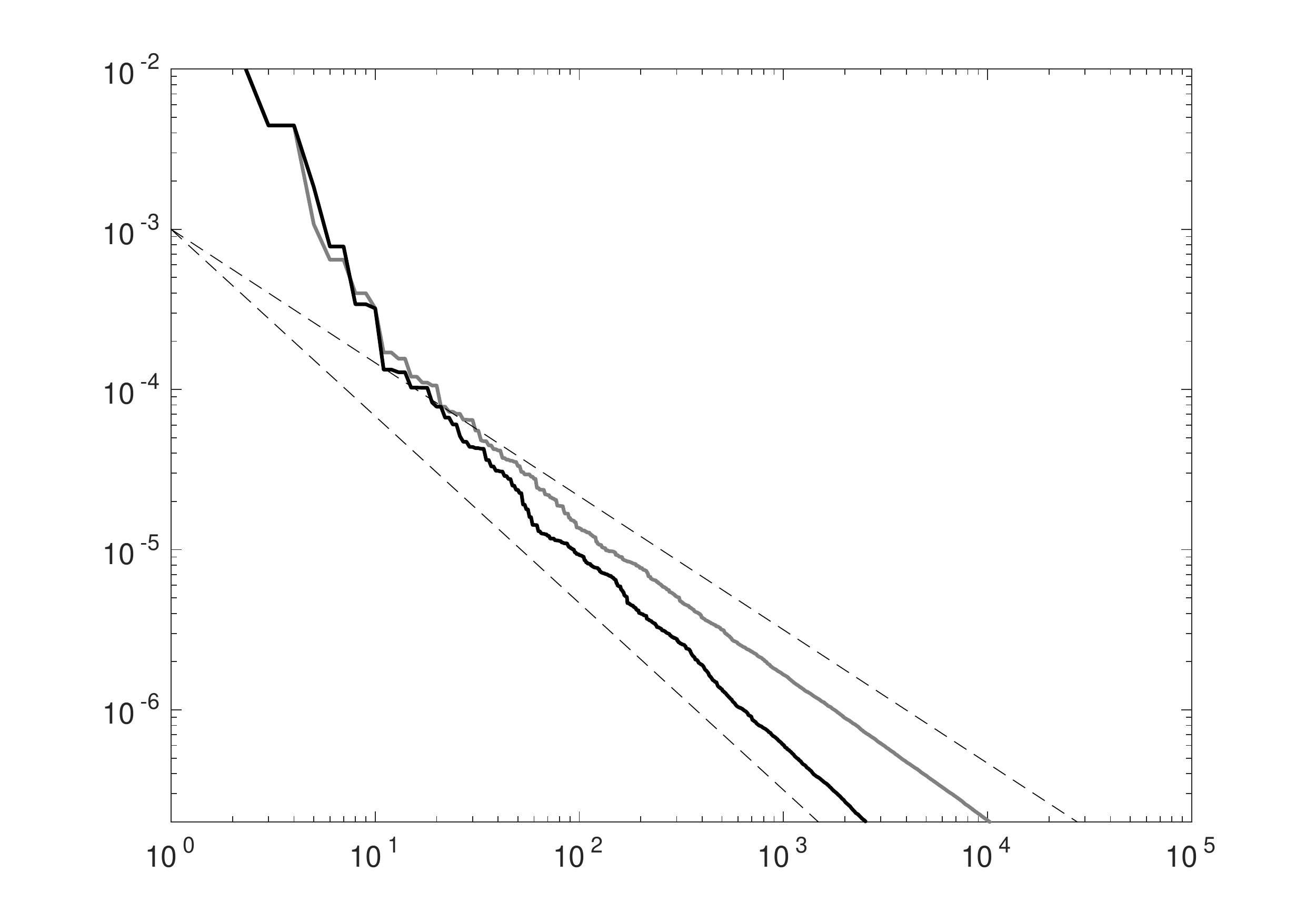}  &
	 \includegraphics[width=6.5cm]{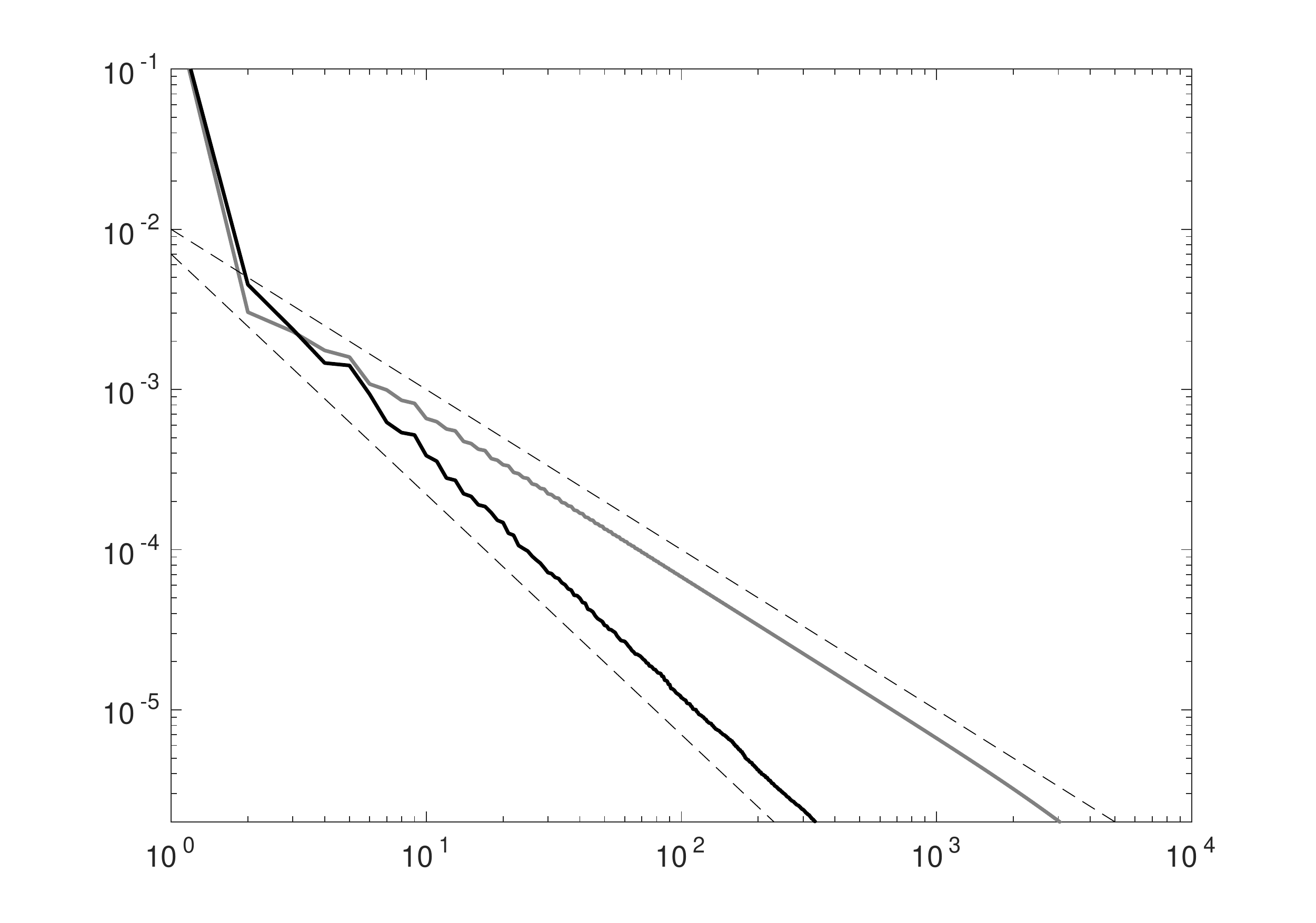} \\
	 \footnotesize(a) ordered $\abs{\bu_{\lambda,\nu}}$ & \footnotesize (b) singular values $\sigma_k$ of $u$ \\
	 \includegraphics[width=6.5cm]{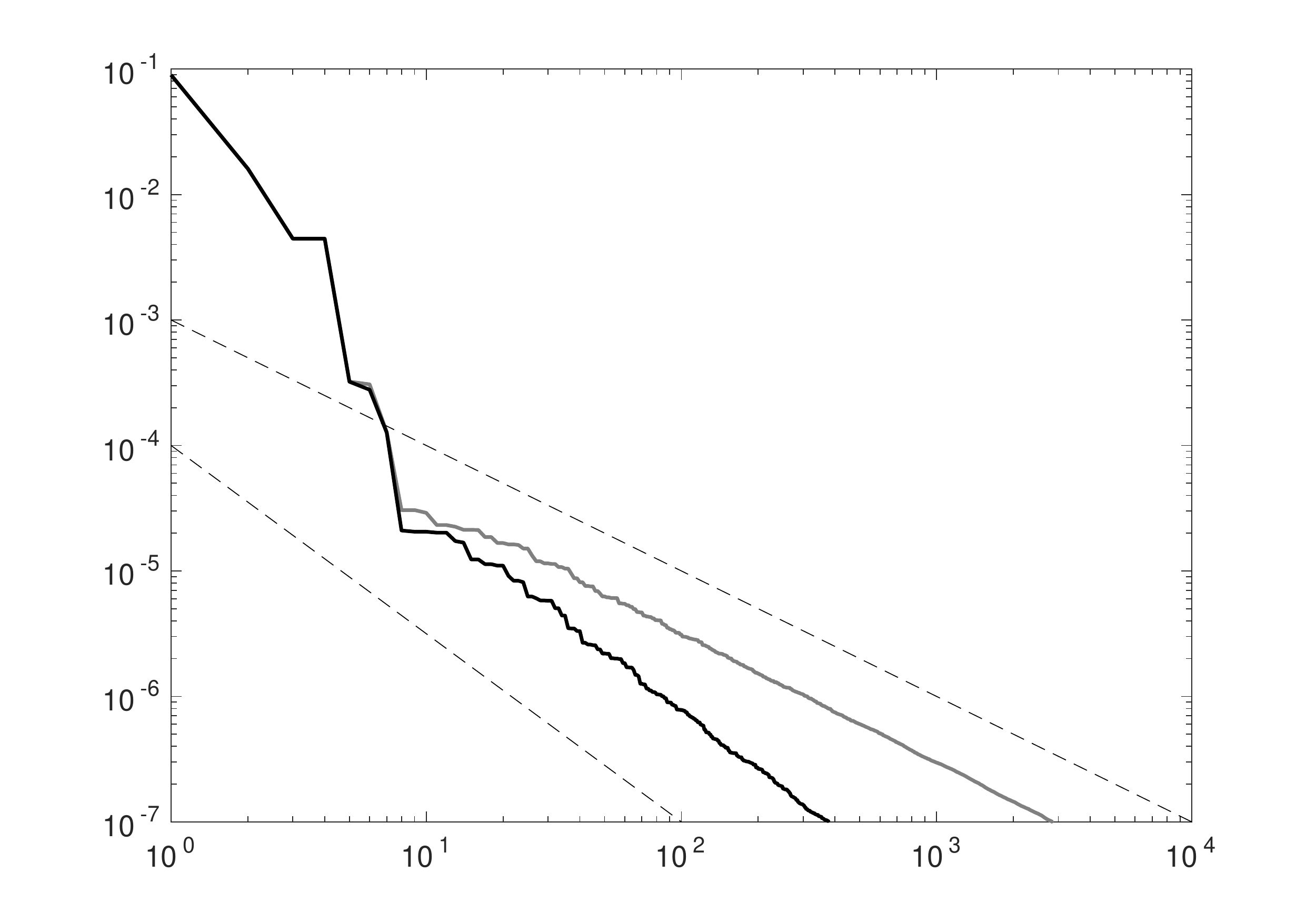} & 
	 \includegraphics[width=6.5cm]{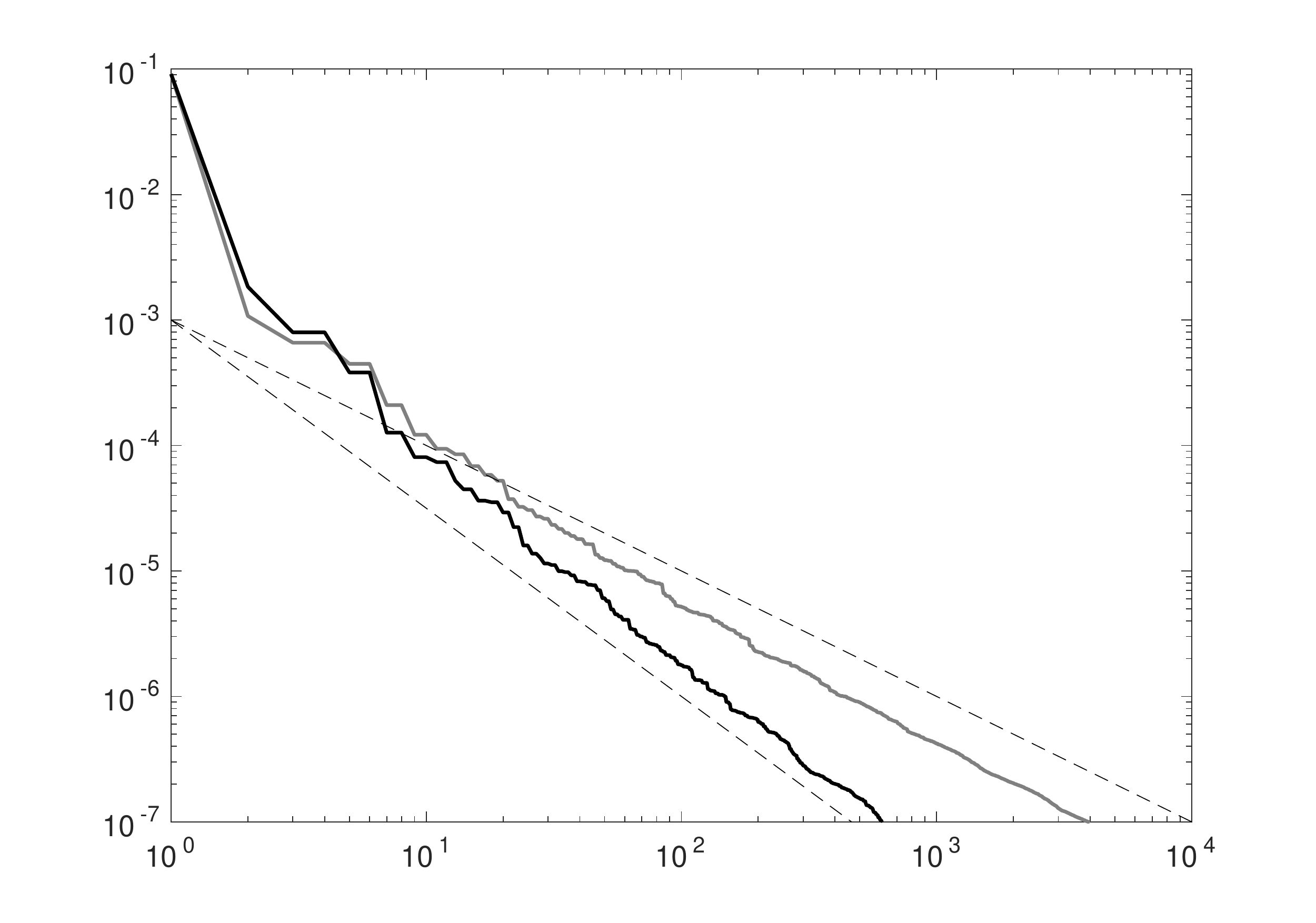} \\
	 \footnotesize (c) ordered $\pi^{(\rs)}_\lambda(\bu)$ & \footnotesize (d) ordered $\pi^{(\rp)}_\nu(\bu)$  
	\end{tabular}
	\caption{Observed decay rates for $\alpha = 1$ (black) and $\alpha=\frac12$ (grey) in Example \ref{ex:cfdecay}. The dashed lines show the respective decay rates expected according to Proposition \ref{propapproxsummary}.}
	\label{fig:cfdecay}
\end{figure}

The numerical tests confirm in particular that the result
\be
\label{spstar}
   s^*_\rs = s^*_\rp = \frac\alpha{m},
\ee
obtained for sufficiently regular problem data, is in general sharp.
We also see that one cannot expect 
$\bu \in \Sigma^{\bar s}$ for any $\bar s$ significantly larger than $\frac\alpha{m}$, and the results indeed suggest the conjecture, similar to what we have shown in a more restricted setting in Proposition \ref{prop:decaylowerbound}, that $(\sigma_k)_{k\in\N} \notin \Sigma^{s}$ for any $s> \max\{s^*_\rs, s^*_\rp\}$.

Moreover, the results in Figure \ref{fig:cfdecay}(a) also demonstrate that in the present case with $m=1$, one indeed only obtains $\bu\in \cA^s(\sidx\times\pidx)$ with $s\approx \frac23\alpha$. In other words, the statement in Proposition \ref{prop:decaylowerbound}(iv), shown in \cite{BCDS:17}, appears to be sharp also for $m=1$. This is a surprising difference to the corresponding results for $m=2,3$ with $s$ up to $\frac\alpha{m}$, which are necessarily sharp.

In contrast parametric expansions with globally supported $\theta_j$ as considered in \S\ref{sec:aniso}, expansions with $\pf_j$ of multilevel type as in Assumptions \ref{ass:multiscale} lead to dimension truncation errors $e_M$ with decay $e_M\lesssim M^{-\alpha/m}$, which matches the approximability properties of $\bu$.
The residual approximation based on $\apply$ also requires compressibility of the matrices $\bA_j$. Under Assumptions \ref{ass:multiscale} and \ref{ass:comprsmoothness} with sufficiently large $\gamma$, the resulting $s^*$-compressibility of $\bA$ comes arbitrarily close to $s^*=\frac\alpha{m}$. Consequently, the complexity of the resulting schemes (both in Theorems \ref{cddresult} and \ref{lrcomplexity}) can come arbitrarily close to the optimal rates determined by the approximability of $\bu$. This is again in contrast to corresponding results for globally supported $\theta_j$, as noted in \S\ref{sec:aniso}.

Finally, we may compare the conclusions of Theorems \ref{cddresult} and \ref{lrcomplexity} for the approximations \eqref{fullnterm} and \eqref{fulllr}, respectively, based on Proposition \ref{propapproxsummary} and Example \ref{ex:cfdecay}.
We first consider the implications of the results for $m=1$ in Figure \ref{fig:cfdecay}, assuming $s^*$ sufficiently close to its limiting value in each case. The scheme for \eqref{fullnterm}, by Theorem \ref{cddresult}, with $n_\text{op}$ operations then converges as $\Ocal(n_\text{op}^{-s})$ for any $s< 2\alpha/3$. By Theorem \ref{lrcomplexity}, the scheme for \eqref{fulllr} converges as $\Ocal(n_\text{op}^{-s})$ for any $s < \alpha/3$.
Thus in this setting, the sparse Legendre expansion \eqref{fullnterm} turns out to be clearly more efficient. By Proposition \ref{propapproxsummary}, one arrives at the same conclusion for $m>1$.

Note that the bound \eqref{hsopbound} for the computation of \eqref{fulllr} is essentially the best that can be expected for any such method that requires orthogonalization of the computed low-rank basis vectors in each step. Besides leading to complexity scaling quadratically in the ranks, this requirement also enforces uniform supports for each set of basis vectors, whereas no such restrictions play any role in the computation of \eqref{fullnterm}.

\section{Summary and conclusions}

In this work, we have studied the approximation of the solution map $Y \ni y \mapsto u(y) \in V$ in $L^2(\pdom,V)$ for parametric diffusion problems, where the parameter domain $Y$ is of high or infinite dimensionality. 
We have considered approximations based on sparse expansions in terms of tensor product Legendre polynomials in $y$, low-rank approximations based on separation of spatial and parametric variables, and higher-order tensor decompositions using further hierarchical low-rank approximation among the parametric variables.

The central aim is to investigate the performance of adaptive algorithms for each type of approximation that require as input only information on the parametric operator and right hand side, and that produce rigorous and computable a posteriori error bounds. 
These goals are achieved, in a unified manner for all considered types of approximations, by Algorithm \ref{alg:tensor_opeq_solve}. Such algorithms are necessarily based on the approximate evaluation of residuals. They are also intrusive, in that they do not treat the underlying parametrized problem as a black box; however, we are not aware of any non-intrusive method with comparable properties.

Although the resulting schemes do not use a priori information on the convergence of the respective approximations of the solution map, they still produce approximations of near-optimal complexity (e.g., with respect to the number of terms or tensor ranks). The question of also guaranteeing a near-optimal operation count for constructing these approximations is more delicate: this computational complexity depends on the costs of approximating the residual, and thus on the approximability properties of the operator. In the case of low-rank approximations, due to the required orthogonalizations, the number of operations also scales at least quadratically with respect to the arising tensor ranks.

Especially keeping the latter point in mind, there is no single type of approximation that is most favorable in all of the representative model scenarios that we have considered. In the case of finitely many parameters of comparable influence, hierarchical tensor representations of $u$ turn out to be advantageous: We can show near-optimal computational complexity on certain natural approximability classes (as in Assumptions \ref{ass:fullbenchmark}, \ref{ass:fullbenchmark dim}) for the adaptive scheme based on the method in \cite{BD}.

The situation turns out to be different in the case of infinitely many parameters of decreasing influence. We have proven in \S\ref{sec:approximability}, for a certain class of such problems, that the norms of Legendre coefficients of $u$ have the same asymptotic decay as the singular values in its Hilbert-Schmidt decomposition. In other words, the ranks in a corresponding low-rank approximation need to increase at the same rate as the number of terms in a sparse Legendre expansion as accuracy is increased. 
The numerical tests given in Figure \ref{fig:cfdecay} indicate that this holds true also for substantially more general problems.  As a consequence, even with the careful residual evaluation given in \S\ref{sec:lowrank}, which can preserve near-optimal ranks, due to the nonlinear scaling with respect to the ranks the computational complexity of finding low-rank approximations scales worse than a direct sparse expansion as considered in \S\ref{sec:sparselegendre}. This conclusion remains true also for hierarchical tensor decompositions involving the same separation between spatial and parametric variables.

For both schemes in \S\ref{sec:sparselegendre} and \S\ref{sec:lowrank}, we have seen that whether the residual can be evaluated at a cost that matches the approximability of the solution depends on the type of parameter-dependence in the diffusion coefficient. As the simple example given in \S\ref{sec:aniso} shows, in the case of diffusion coefficients expanded in terms of increasingly oscillatory functions of global support, due to insufficient operator compressibility, the complexity of the methods is in general worse than the approximability of the solution would allow.
However, in the case of diffusion coefficients whose parametrization has a multilevel structure, we have demonstrated that one can come arbitrarily close to fully exploiting the approximability of $u$.

\bibliography{bcd_parametric_r1}

\begin{appendix}

\section{Compressibility of parametric operators}\label{sec:opapprox}

The approximate application of the operator $\bA$ in Algorithm \ref{alg:tensor_opeq_solve} must involve, in particular, an approximate
application of the spatial components $\bA_j$.
Except for special cases, the infinite matrices $\bA_j$ are not sparse, but contain infinitely many nonzero entries in each column. 
Their approximation hinges on the compressibility of these operator representations as  in Proposition \ref{prop:sparsecompr}.

These are closely related to $s^*$-compressibility of $\bA_j$ as in \eqref{compress}, which here means that there exist matrices $\bA_{j,n}$ with $\alpha_{j,n} 2^n$ entries per row and column and such that 
\beqn
\label{compressAj}
\norm{\bA_j - \bA_{j,n}} \leq \beta_{j,n} 2^{-s n}, \quad \text{ for $0 < s < s^*$},
\eeqn 
and where $\ab_j, \bb_j \in \spl{1}(\N_0)$. This is known  to hold  for each fixed $j$ when employing a piecewise polynomial wavelet-type Riesz basis $\{\psi_\lambda\}_{\lambda \in \mathcal{S}}$ for $V$, see e.g. \cite{Cohen:01,Stevenson:02}.
However, when insisting on the same compressibility bound $s^*$ for all $\bA_j$, the quantities $\|\ab_j\|_{\spl{1}}$ and $\|\bb_j\|_{\spl{1}}$
can in general not be expected to both remain uniformly bounded in $j$ when the $\pf_j$ become increasingly oscillatory.

We consider operators $\bA_j$ arising from multilevel representations of the parameter of the form in Assumptions \ref{ass:multiscale}. In the spatial variable, we use a wavelet Riesz basis $\{ \psi_\lambda \}_{\lambda\in\sidx}$, which yields compressible $\bA_j$. Their compressibility is governed by  the modulus of the entries $\langle \pf_j \nabla \psi_\lambda, \nabla \psi_{\lambda'}\rangle$, where $\langle\cdot,\cdot\rangle$ denotes the $L^2$-inner product. Specifically, recall e.g.\ from \cite{Cohen:01} that compression strategies for
wavelet representations of an elliptic second-order operator with diffusion field $c$ are based on bounds of the type
\beqn
\label{reference}
|\langle c\nabla \psi_\lambda,\nabla \psi_{\lambda'}\rangle|\lesssim \|c\|_{W^{b- m/2}(\spL{\infty}(D))}
 2^{-\abs{|\lambda|-|\lambda'|}b} ,
\eeqn
where $m$ is the dimensionality of the spatial domain, and
where $b> m/2$ depends on the smoothness of the diffusion coefficient $c$ and of the wavelets $\psi_\lambda$.

However, in our case the higher-order norms of $c$ on the right hand side of \eqref{reference} with $c=\theta_j$ depend on $j$, and the overall compression rate is also limited by the decay of the operator truncation error \eqref{eM}. In view of Proposition \ref{prop:sparsecompr}, the objective here is thus to have a compression rate for the individual components $\bA_j$ that is as high as possible, so that one approaches the limiting value imposed by \eqref{eM}.

We now summarize the conditions on the multilevel parametric expansion functions and the spatial wavelet basis under which we will verify the requirements of Proposition \ref{prop:sparsecompr}. To simplify notation, let $S_\lambda \coloneqq \supp \psi_\lambda$.

\begin{assumptions}\label{ass:comprsmoothness}
With $\{\xi_\mu\}_{\mu \in \Lambda}$ as in Assumptions \ref{ass:multiscale}, for some $\gamma > 0$, 
\begin{equation}\label{productsmoothness}
	   \xi_\mu \nabla\psi_{\lambda'}\in H^{\gamma}(S_\lambda), \quad \mu \in \Lambda, \; \lambda,\lambda'\in\sidx, 
\end{equation}	   
and the $\psi_\lambda$ have vanishing moments of order $k$ with $k>\gamma-1$. 
\end{assumptions}

Note that the $\nabla\psi_\lambda$ then have vanishing moments of order $k+1>\gamma$.
If $\abs{\lambda},\abs{\mu} \leq \abs{\lambda'}$, using \eqref{productsmoothness} we obtain the standard estimate
\beqn
\label{vanmom}
\abs{\langle \xi_\mu \nabla \psi_{\lambda}, \nabla \psi_{\lambda'} \rangle}
  \le \inf_{P\in \Pi_{k+1}^{m}}\|\xi_\mu\nabla\psi_{\lambda} - P\|_{\spL{2}(S_{\lambda'})} \norm{\psi_{\lambda'}}_{\spL{2}}
\lesssim 2^{-|\lambda'|\gamma} \abs{ \xi_\mu \nabla\psi_{\lambda} }_{H^{\gamma}(S_{\lambda'})}.
\eeqn
Combining this with
$  \abs{ \xi_\mu \nabla\psi_{\lambda} }_{H^{\gamma}(S_{\lambda'})} 
    \lesssim 2^{- \frac{m}2 \abs{\abs{\lambda}-\abs{\lambda'}}} 2^{ \gamma \max\{\abs{\mu},\abs{\lambda}\}}  $,
we obtain
\beqn
\label{entryestimate}
  \abs{\langle \xi_\mu \nabla \psi_{\lambda} , \nabla \psi_{\lambda'} \rangle}
   \lesssim 2^{-(\gamma + \frac{m}2) \abs{\abs{\lambda}-\abs{\lambda'}}} 2^{\gamma(\abs{\mu}-\abs{\lambda})_+}.
\eeqn
Note that the requirement \eqref{productsmoothness} could be weakened along the lines of \cite{Stevenson:02} to \emph{piecewise} smoothness, in which case combinations of wavelets with overlapping singular supports need to be considered separately. Since this is not essential for our purposes, to keep the exposition accessible we do not consider this in further detail.

The consequences of the estimate \eqref{entryestimate} depend on the relations between $\abs{\mu}$, $\abs{\lambda}$, and $\abs{\lambda'}$. We distinguish the three following cases:

If $ \abs{\mu} \leq \abs{\lambda}, \abs{\lambda'}$,
we obtain an estimate analogous to the standard case \eqref{reference},
\beqn
 \abs{\langle \xi_\mu \nabla \psi_\lambda, \nabla \psi_{\lambda'} \rangle}
 \lesssim 2^{-(\gamma + \frac{m}2) \abs{\abs{\lambda}-\abs{\lambda'}}}.
\eeqn

If $ \abs{\lambda} \leq \abs{\mu} < \abs{\lambda'} $, we obtain the modified estimate
\beqn\label{sandwichcompression}
   \abs{\langle \xi_\mu \nabla \psi_\lambda, \nabla \psi_{\lambda'} \rangle}
 \lesssim 2^{- \gamma (\abs{\lambda'}-\abs{\mu})} 2^{- \frac{m}2 \abs{\abs{\lambda}-\abs{\lambda'}} }.
\eeqn
Note that for each fixed $\mu$ and fixed levels $\abs{\lambda}, \abs{\lambda'}$, there exist in this case $\Ocal(2^{m(\abs{\lambda'}-\abs{\mu})})$ entries that may be nonzero.

Finally, if $ \abs{\lambda}, \abs{\lambda'} \leq \abs{\mu}$, then for each $\mu$, there exist $\abs{\mu}$ indices $\lambda$ such that the corresponding supports overlap, and in turn there exist $\Ocal(\abs{\mu}^2)$ pairs of $\lambda,\lambda'$ that may give a nonvanishing entry. These entries satisfy
\begin{equation}\label{slowcompression}
  \abs{ \langle \xi_\mu \nabla\psi_\lambda, \nabla\psi_{\lambda'}\rangle }
     \lesssim 2^{-m\abs{\mu}} 2^{\frac{m}{2}(\abs{\lambda}+\abs{\lambda'})}.
\end{equation}
Note that we do not assume any vanishing moments for $\xi_\mu$. Hence in general not much can be gained by discarding further entries in this third case.

Our strategy for dealing with the increasingly oscillatory nature of $\xi_\mu$ as $\abs{\mu}\to\infty$ is to retain a common  compression rate $s^*$ in \eqref{compressAj} uniformly in $\mu$ without losing the decay induced by the factors $c_\mu$.
To this end, we retain additional entries of the $\bA_j$ in the cases \eqref{sandwichcompression} and \eqref{slowcompression}. This results in the $j$-dependent number of nonzero entries  in each row and column of the compressed operators $\bA_{j,n}$, which is of order $\Ocal((1+\abs{\mu_j}^q) 2^n)$. 

Let $a_{\mu_j,\lambda,\lambda'}$ denote the entries of $\bA_{j}$, that is,
\[
   a_{\mu_j,\lambda,\lambda'} = c_{\mu_j} \langle \xi_{\mu_j} \nabla \psi_{\lambda},\nabla\psi_{\lambda'}\rangle.
\]

\begin{proposition}\label{prop:wvcompr}
	Under Assumptions \ref{ass:comprsmoothness}, 
 $\bA_{j,n}$ defined for $n\in\N$ by retaining only those entries from $\bA_j = (a_{\mu_j,\lambda,\lambda'})_{\lambda,\lambda'\in\sidx}$ for which
\[
  d_{\mu_j}(\lambda,\lambda'):= \max\bigl\{ \abs{\lambda},\abs{\lambda'}\bigr\} - \max\bigl\{ \abs{\mu_j}, \min\{ \abs{\lambda},\abs{\lambda'} \} \bigr\} 
  \leq  \frac nm + \frac{ \log_2 (1+\abs{\mu_j})}{\gamma},
\]	
and where we set $\bA_{j,0} = 0$,
satisfy the following conditions:
\begin{enumerate}[{\rm(i)}]
\item
With $\tau \coloneqq \gamma /m$, one has $
\|\bA_j - \bA_{j,n}\|\lesssim c_{\mu_j}  2^{-\tau n}$, $n\in\N$,
where the hidden constant is independent of $j,n$.
\item  The number of nonvanishing entries in each column of $\bA_{n,j}$ does not exceed
a uniform constant multiple of $\bigl(1+|\mu_j|^q\bigr)2^n$, for $q\coloneqq \max\{1, \tau^{-1} \}$.
\end{enumerate}
\end{proposition}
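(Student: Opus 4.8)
The plan is to work entirely with the matrix $\mathbf{R}_n := \bA_j - \bA_{j,n}$ of discarded entries on $\spl{2}(\sidx)$, whose nonzero entries are exactly those $a_{\mu_j,\lambda,\lambda'}$ with $d_{\mu_j}(\lambda,\lambda') > T_n$, where $T_n := \frac{n}{m} + \gamma^{-1}\log_2(1+\abs{\mu_j})$. The first observation is that every entry in the slow-compression regime $\max\{\abs{\lambda},\abs{\lambda'}\}\le\abs{\mu_j}$ satisfies $d_{\mu_j}\le 0 \le T_n$ and is therefore always retained; hence $\mathbf{R}_n$ only collects entries governed by the unified bound \eqref{entryestimate} and its sandwich refinement \eqref{sandwichcompression}. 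For $n=0$ the claim (i) reduces to $\norm{\bA_j}\lesssim c_{\mu_j}$, which follows from boundedness of $A_j\colon V\to V'$ together with $\norm{\xi_{\mu_j}}_{L^\infty}=1$ and the Riesz basis property of $\{\psi_\lambda\}$.

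For (i) I would bound $\norm{\mathbf{R}_n}$ by a Schur/band argument. Write $\mathbf{R}_n=\sum_{\delta>T_n}\bB_\delta$, where $\bB_\delta$ collects the entries with $d_{\mu_j}(\lambda,\lambda')=\delta$, and split $\bB_\delta$ further into its part with $\abs{\mu_j}\le\min\{\abs{\lambda},\abs{\lambda'}\}$ (where $\delta$ is the genuine level difference) and its sandwich part $\min\le\abs{\mu_j}\le\max$. Within each fixed pair of levels $\ell\le L$ I would estimate the block norm by $\sqrt{(\text{row sum})\cdot(\text{column sum})}$, inserting the entry bounds together with the support-overlap counts: a coarse wavelet meets $\Ocal(2^{m(L-\ell)})$ finer ones while a fine wavelet meets $\Ocal(1)$ coarser ones, and in the sandwich case the common overlap with $\operatorname{supp}\xi_{\mu_j}$ further limits the fine index count to $\Ocal(2^{m(L-\abs{\mu_j})})$. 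In both cases this yields a block norm $\lesssim c_{\mu_j}2^{-\gamma\delta}$ (with an extra decaying factor $2^{-(m/2)(\abs{\mu_j}-\ell)}$ in the sandwich case); summing over the admissible coarse levels via $\spl{2}$-orthogonality of blocks attached to disjoint index sets preserves $\norm{\bB_\delta}\lesssim c_{\mu_j}2^{-\gamma\delta}$. Summing the geometric series over $\delta>T_n$ then gives $\norm{\mathbf{R}_n}\lesssim c_{\mu_j}2^{-\gamma T_n}=c_{\mu_j}2^{-\tau n}(1+\abs{\mu_j})^{-1}\le c_{\mu_j}2^{-\tau n}$ with $\tau=\gamma/m$, which is exactly (i).

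For (ii) I would fix a column index $\lambda'$ and count the retained row indices $\lambda$ by partitioning according to the position of $\abs{\lambda}$ relative to $\abs{\lambda'}$ and $\abs{\mu_j}$. Retention forces $\max\{\abs{\lambda},\abs{\lambda'}\}\le \max\{\abs{\mu_j},\min\{\abs{\lambda},\abs{\lambda'}\}\}+T_n$, so finer $\lambda$ are confined to levels at most $T_n$ above $\max\{\abs{\mu_j},\abs{\lambda'}\}$; over these levels the overlap counts $\Ocal(2^{m(\cdot)})$ sum to $\Ocal(2^{mT_n})$, while coarser $\lambda$ and the always-retained slow-compression entries each contribute only $\Ocal(\abs{\mu_j})$, one per level. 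The key identity is $2^{mT_n}=2^{n}(1+\abs{\mu_j})^{m/\gamma}=2^{n}(1+\abs{\mu_j})^{1/\tau}$, which is precisely where the logarithmic term in the threshold was designed to land; together with $q=\max\{1,\tau^{-1}\}$ this bounds the column count by $(1+\abs{\mu_j}^q)2^n$, and the same count applies to rows by symmetry. I expect the main obstacle to be the bookkeeping in the sandwich regime: closing the block norms at rate $c_{\mu_j}2^{-\gamma\delta}$ requires correctly intersecting the three supports when counting nonzeros and then recombining the blocks across coarse levels without losing a factor in $\abs{\mu_j}$, and one must verify that the single choice of $T_n$ simultaneously produces the clean powers of $(1+\abs{\mu_j})$ demanded by both (i) and (ii).
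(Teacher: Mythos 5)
Your proof is correct, and its skeleton coincides with the paper's: the same truncation threshold $d_{\mu_j}(\lambda,\lambda')\le \frac nm + \gamma^{-1}\log_2(1+\abs{\mu_j})$, Schur-type norm estimates on the discarded entries based on \eqref{entryestimate}, the observation that $\norm{\bA_j}\lesssim c_{\mu_j}$ settles $n=0$, and direct support counting for (ii). Where you genuinely deviate is in how the norm estimate for (i) is organized. The paper applies a single weighted Schur lemma with weights $\omega_\lambda = 2^{-m\abs{\lambda}/2}$ and a four-way split $I_1,\ldots,I_4$ of the discarded index set; in the case $\abs{\lambda'}\le\abs{\mu_j}<\abs{\lambda}$ its row sums pick up one contribution per coarse level, hence a factor $(1+\abs{\mu_j})$, so the paper arrives at $\norm{\bA_j-\bA_j^N}\lesssim c_{\mu_j}(1+\abs{\mu_j})2^{-\gamma N}$ and needs the logarithmic term in the threshold precisely to cancel this factor. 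Your band decomposition $\mathbf{R}_n=\sum_{\delta>T_n}\bB_\delta$, with per-block $\sqrt{(\text{row sum})\times(\text{column sum})}$ estimates combined with $\ell^2$-orthogonality across the disjoint coarse levels — using the extra decay $2^{-(m/2)(\abs{\mu_j}-\ell)}$ in the sandwich blocks and the $\Ocal(2^{m\delta})$ fine-index count forced by intersecting with $\supp\xi_{\mu_j}$ — avoids this loss entirely and yields each band norm $\lesssim c_{\mu_j}2^{-\gamma\delta}$ uniformly in $\mu_j$, hence the strictly sharper bound $\norm{\mathbf{R}_n}\lesssim c_{\mu_j}2^{-\tau n}(1+\abs{\mu_j})^{-1}$; in your version the log term in the threshold is a bonus rather than a necessity. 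For (ii) the counting arguments agree, and both (yours explicitly, the paper's implicitly via the overlap counts recorded after \eqref{sandwichcompression} and \eqref{slowcompression}) rely on the intersection with $\supp\xi_{\mu_j}$ to keep the retained fine-level entries per column at $\Ocal(2^{mT_n})=\Ocal\bigl((1+\abs{\mu_j})^{1/\tau}2^n\bigr)$ and the always-retained coarse entries at $\Ocal(1+\abs{\mu_j})$, which is exactly where $q=\max\{1,\tau^{-1}\}$ originates.
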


\begin{proof}
Note first that the choice $\bA_{j,0}=0$ satisfies the stated conditions because $\norm{\bA_j}\lesssim c_{\mu_j}$, and it remains to consider $n>0$.
For $j\in\N$, we set $\mu \coloneqq \mu_j$.
In a first step, for $N>0$, we obtain a compressed version $\bA_{j}^{N}$ of $\bA_j$ as follows: for the column $\lambda$, retain only those entries with row index $\lambda'$ such that 
$d_{\mu}(\lambda,\lambda') \leq N$.
Note that by symmetry of $d_\mu$ in its two arguments and that of $\bA_j$, the approximation $\bA_{j}^{N}$ is also  symmetric.
We now show that for 
\begin{equation}\label{Nnchoice}
N=  N_n := \frac nm + \frac{ \log_2 (1+\abs{\mu})}{\gamma},
\end{equation}  
we arrive at the statement.
We use the standard weighted Schur Lemma, which in the present symmetric case yields that
\beqn\label{schur}
\omega_{\lambda}^{-1}\sum_{\lambda'\colon d_{\mu}(\lambda,\lambda')>N}\omega_{\lambda'} \abs{a_{\mu,\lambda,\lambda'}} \le B, \quad 
\lambda \in \mathcal{S} ,\qquad \text{ implies}\quad 
  \|\bA_j - \bA_{j}^{N}\| \le B.
\eeqn
Note that $d_{\mu}(\lambda,\lambda')>0$ implies that $\abs{\lambda}>\abs{\mu}$ or $\abs{\lambda'}>\abs{\mu}$. Thus, as a particular consequence of \eqref{entryestimate}, if $d_{\mu}(\lambda,\lambda')>0$ we have
\beqn
  \abs{a_{\mu,\lambda,\lambda'}} \lesssim c_{\mu}  2^{-\gamma d_\mu(\lambda,\lambda')} 2^{- \frac{m}2 \abs{\abs{\lambda}-\abs{\lambda'}} }.
\eeqn
With the usual choice $\omega_\lambda := 2^{-\frac{m}2 |\lambda|}$, and setting 
\[
 I(\lambda;N):= \{ \lambda'\colon d_{\mu}(\lambda,\lambda')>N \},
\]
we obtain
\[ 
 \omega_{\lambda}^{-1}\sum_{\lambda'\in I(\lambda;N)}\omega_{\lambda'} \abs{a_{\mu,\lambda,\lambda'}} \lesssim 
   c_{\mu} \sum_{\lambda' \in I(\lambda;N)} 
   2^{-m(\abs{\lambda'} -\abs{\lambda})_+}
   2^{- \gamma d_\mu(\lambda,\lambda')} .
\]
We now decompose $I(\lambda;N) = I_1 \cup I_2 \cup I_3 \cup I_4$, where
\begin{align*}
 I_1 &:= \{ \lambda'\in I(\lambda;N)\colon \abs{\lambda'}\leq \abs{\mu}<\abs{\lambda}\}, &
  I_2 &:= \{ \lambda'\in I(\lambda;N)\colon \abs{\lambda}\leq \abs{\mu}<\abs{\lambda'}\},\\
 I_3 &:= \{ \lambda'\in I(\lambda;N)\colon  \abs{\mu}<\abs{\lambda'} \leq \abs{\lambda}\}, &
  I_4 &:= \{ \lambda'\in I(\lambda;N)\colon  \abs{\mu}<\abs{\lambda} < \abs{\lambda'}\}.
\end{align*}
Since  $\#(I_1)\lesssim 1 + |\mu|$,
\beqn\label{I1est}
  \sum_{\lambda'\in I_1} 2^{-m(\abs{\lambda'} -\abs{\lambda})_+}
   2^{-\gamma d_\mu(\lambda,\lambda')}
    \lesssim (1+\abs{\mu}) 2^{-\gamma N}. 
\eeqn
Likewise, we obtain  the   estimates
\begin{align*}
 \sum_{\lambda'\in I_2} 2^{-m(\abs{\lambda'} -\abs{\lambda})_+}
   2^{-\gamma d_\mu(\lambda,\lambda')} 
  & \lesssim \sum_{\ell=\abs{\mu}+N}^\infty \sum_{\substack{\lambda'\in I_2 \\ \abs{\lambda'}=\ell }} 2^{-m(\abs{\lambda'}-\abs{\lambda})} 2^{-\gamma \ell}   \\
   &\lesssim  \sum_{\ell=\abs{\mu}+N}^\infty  2^{-\gamma\ell} \bigl( 2^{m(\ell - \abs{\mu})} 2^{-m(\ell-\abs{\lambda})}  \bigr) \\
   &\lesssim 2^{- \gamma N} 
\end{align*}
and
\[
   \sum_{\lambda'\in I_3} 2^{-m(\abs{\lambda'} -\abs{\lambda})_+}
   2^{- \gamma d_\mu(\lambda,\lambda')}
    \lesssim \sum_{\ell = \abs{\mu}}^{\abs{\lambda} - N} 2^{-\gamma (\abs{\lambda}-\ell)} \lesssim 2^{-\gamma N},
\]
as well as
\begin{align*}
  \sum_{\lambda'\in I_4} 2^{-m(\abs{\lambda'} -\abs{\lambda})_+}
   2^{- \gamma d_\mu(\lambda,\lambda')}
    &  \lesssim 
     \sum_{\ell = \abs{\lambda} + N}^\infty 2^{-\gamma \ell} \sum_{\substack{\lambda'\in I_4 \\ \abs{\lambda'}=\ell }} 2^{-m(\ell-\abs{\lambda})}  \\
     & \lesssim \sum_{\ell = \abs{\lambda} + N}^\infty 2^{-\gamma\ell}
       \bigl( 2^{m(\ell - \abs{\lambda})} 2^{-m(\ell-\abs{\lambda})}  \bigr) \\
       &\lesssim 2^{-\gamma N}.
\end{align*}
Note that $\#(I_3)\lesssim N$ and $\#(I_2), \#(I_4)\lesssim 2^{mN}$.
Except for \eqref{I1est}, the constants in these bounds are independent of $\mu$.
 In summary, we thus obtain
\be
\label{AjN}
    \| \bA_j - \bA_{j}^{N} \| \lesssim c_{\mu} (1 + \abs{\mu})\, 2^{-\gamma N}
\ee
with a uniform constant. As pointed out above, each column of $\bA_j^N$ has at most $\Ocal(\abs{\mu} + 2^{mN})$ entries. 
With $\tau = \gamma /m$ and $N_n$ as in \eqref{Nnchoice}, the estimate
\eref{AjN} takes the desired form 
\be
\label{P1}
 \| \bA_j - \bA_{j}^{N_n} \| \lesssim c_{\mu} 2^{-\tau n},\
\ee
where the number of nonzero entries can be bounded further by 
\be
\label{P2}
 |\mu|+2^{mN_n} \lesssim |\mu| + 2^n (1+|\mu|)^{\frac{m}{\gamma}} \lesssim \bigl(1+|\mu|^{\max\{1,m/\gamma\}} \bigr)2^n,
\ee
which was to be shown.
\end{proof}

Relations \eref{P1}, \eref{P2} show that the resulting compression rate is limited by the smoothness of the expansion functions $\xi_\mu$ and the spatial wavelets $\psi_\lambda$, as well as by the number of vanishing moments of the $\psi_\lambda$, expressed by the value $\gamma$. As Proposition \ref{prop:sparsecompr} shows, with increasing $\gamma$ the rate of compressibility of the complete operator $\bA$ approaches the limiting value determined by the decay of its tail \eqref{eM}.
 
\begin{remark}
	\label{rem:badcompr}
Proposition \ref{prop:wvcompr} yields, as we have also noted in \S\ref{sec:aniso}, a compressibility result for multilevel-type parametrizations that is substantially more favorable than what can in general be obtained for globally supported, increasingly oscillatory $\theta_j$.
In the case $\theta_j \sim j^{-\beta} \sin(j\pi \cdot)$ on $D=]0,1[$ considered in \S\ref{sec:aniso}, in place of \eqref{entryestimate} we obtain the analogous bound
\[
   |\langle \theta_j \psi_\lambda',  \psi_{\lambda'}' \rangle|
       \lesssim  j^{-\beta} 2^{-(\gamma + \frac12) \abs{\abs{\lambda}-\abs{\lambda'}}} 2^{\gamma(\log_2 j - \abs{\lambda})_+}.
\]
One may thus proceed as in the proof of Proposition \ref{prop:wvcompr}, with $\abs{\mu}$ replaced by $\log_2 j$, to obtain $\bA_{j,n}$ such that 
\[
   \norm{\bA_j -\bA_{j,n}} \lesssim j^{-\beta} 2^{-\gamma n} . 
\]
However, among the pairs of indices $(\lambda,\lambda')$ with $\abs{\lambda} \leq \log_2 j$, we are eventually left with $\Ocal( j (1 + \log_2 j) 2^n)$ entries per row and column. 

These bounds yield a compressibility result for $\bA$ similarly to Proposition \ref{prop:sparsecompr}. In the present case we have, for $\bA_{\mathbf{n}}$ as defined in \eqref{defAn}, the simpler estimate
\[
\norm{\bA - \bA_\mathbf{n}} \lesssim   \sum_{j=0}^{M} \bignorm{ \bA_j - \bA_{j,n_j} }  +  M^{-(\beta-1)} \lesssim  2^{-\gamma n_0} + \sum_{j=1}^{M}  j^{-\beta} 2^{-\gamma n_j}   + M^{-(\beta-1)}.
\]
Choosing $n_j$ appropriately to ensure that the right hand side is of order $M^{-(\beta-1)}$  and summing the resulting total numbers of nonzero entries, as in \cite{Gittelson:14} one arrives at the limiting value $s^* = \frac12 (\beta -1)$ for the compressibility of $\bA$.
\end{remark}

\section{Proofs of auxiliary results}
	\label{app:thm3}
	
\begin{proof}[Proof of Theorem \ref{lmm:combined_coarsening}]
The estimates \eqref{weta}, \eqref{eq:combinedcoarsen_errest} are obtained exactly as in \cite{BD}.
To prove \eqref{eq:combinedcoarsen_suppest} we follow the lines of the argument in \cite{BD}, and adopt the notation used there,    
let $N= N(\eta)\in \N$ be the minimal integer such that
$\|\bu - \bar C_{\bu,N}\bu\| \le \alpha \eta$.
Then
\begin{align}
\label{alphaeta1}
\alpha\eta  & < \|\bu - \bar C_{\bu,N-1}\bu\| \\
& \le \inf_{\#\Lambda_\rs + \#\Lambda_\rp \le N-1}\Big\{\| \pi^{(\rs)}(\bu) - \Restr{\Lambda_\rs}\pi^{(\rs)}(\bu)\|
+ \| \pi^{(\rp)}(\bu) - \Restr{\Lambda_\rp}\pi^{(\rp)}(\bu)\|\Big\}\nonumber\\
&\le  (\#\Lambda_\rs)^{-s_\rs}\|\pi^{(\rs)}(\bu)\|_{\Acal^{s_\rs}}+  (\#\Lambda_\rp)^{-s_\rp}\|\pi^{(\rp)}(\bu)\|_{\Acal^{s_\rp}}.
\end{align}
Abbreviating $n_i := \#\Lambda_i$, $i=\rs,\rp$, to obtain a good upper for bound $N$, we would like to
find the minimal $n_\rs+n_\rp$ such that
\beqn
\label{alphaeta}
\alpha\eta \le (n_\rs)^{-s_\rs} \|\pi^{(\rs)}(\bu)\|_{\Acal^{s_\rs}}+  (n_\rp)^{-s_\rp}\|\pi^{(\rp)}(\bu)\|_{\Acal^{s_\rp}},
\eeqn
to conclude that $N(\eta)\le n_\rs+n_\rp$. 
Equilibrating the upper bound yields a pair $n_\rs, n_\rp$ given by
\beqn
\label{ni}
n_i = n_i(\eta) := \Big\lceil \Big(2\|\pi^{(i)}(\bu)\|_{\Acal(\gamma_i)}/\alpha\eta\Big)^{1/s_i} \Big\rceil, \quad i=\rs,\rp,
\eeqn
This yields
\begin{equation*}
\#\supp_\rs\bw_\eta +\#\supp_\rp\bw_\eta  \le 2+  \biggl( \frac{2\|\pi^{(\rs)}(\bu)\|_{\Acal^{s_\rs}}}{\alpha\eta}\biggr)^{1/s_\rs} +
 \biggl( \frac{2\|\pi^{(\rp)}(\bu)\|_{\Acal^{s_\rp}}}{\alpha\eta} \biggr)^{1/s_\rp},
\end{equation*}
which is the first inequality in \eqref{eq:combinedcoarsen_suppest}.

Regarding the second inequality   in \eqref{eq:combinedcoarsen_suppest}, note first that
$N\le B_i n_i$, $i=\rs,\rp$,
where $B_i$ depend only on $s_\rs,s_\rp$. To bound $\|\pi^{(i)}(\bw_\eta)\|_{\Acal^{s_i}}$ we only need 
to estimate 
\[\sup_n n^{s_i} \inf_{\#\supp \hat\bw \leq n}\norm{\hat\bw-\pi^{(i)}(\bw_\eta)}, \quad i=\rs,\rp, \]
for $n \le \#\supp_i \bw_\eta\le N$.
To that end, denoting by $\hat \bu^{(i)}_n$ a best $n$-term approximation to $\pi^{(i)}(\bu)$ and using \eqref{eq:combinedcoarsen_rankest},
we obtain
\begin{align*}
\inf_{\#\supp \hat\bw \leq n}\norm{\hat\bw-\pi^{(i)}(\bw_\eta)} & \le \|\pi^{(i)}(\bw_\eta) - \pi^{(i)}(\bu)\|+ \|\pi^{(i)}(\bu)- \hat \bu^{(i)}_n\|\\
 &\le \|\bw_\eta - \bu\| +  n^{-s_i}\|\pi^{(i)}(\bu)\|_{\Acal^{s_i}}\\
 &\le C(\alpha)\eta +  n^{-s_i}\|\pi^{(i)}(\bu)\|_{\Acal^{s_i} }\nonumber\\
 &\le  \frac{2C(\alpha)}{\alpha}n_i^{-s_i}\|\pi^{(i)}(\bu)\|_{\Acal^{s_i} }   + n^{-s_i}\|\pi^{(i)}(\bu)\|_{\Acal^{s_i} },
 \end{align*}
 where we have used \eqref{ni} and where $C(\alpha):= \bigl( 2+\alpha +  2^{3/2} (1+\alpha)\bigr)$.
Hence
\begin{multline*}
 n^{s_i} \inf_{\#\supp \hat\bw \leq n}\norm{\hat\bw-\pi^{(i)}(\bw_\eta)} \\ 
 \le \biggl(1+\frac{2C(\alpha)}{\alpha}\biggl(\frac{n}{n_i}\biggr)^{s_i} \biggr)\|\pi^{(i)}(\bu)\|_{\Acal^{s_i}} 
 \le \biggl(1+\frac{2C(\alpha)B_i^{s_i}}{\alpha}\biggr)\|\pi^{(i)}(\bu)\|_{\Acal^{s_i}} ,
\end{multline*}
which completes the proof.
\end{proof}

\begin{proof}[Proof of Proposition \ref{hsworkest}]
As we assume $\bv$ to be given in SVD form, $\recompress$ in step (S1) of the procedure $\apply$ takes only $\Ocal(r)$ operations. Since it preserves the SVD form, the subsequent $\coarsen$ using quasi-sorting takes $\Ocal(r(n_\rs + n_\rp))$ operations (with the computation of the contractions as the dominating contribution).

In computing the quantities  $\norm{\bv_{p,q}}$ and $\pi^{(\rs)}_\nu(\bv_{[p,q]})$ in steps (S2) and (S3), we need to take into account that the vectors $\Restr{\Lambda^{(\rp)}_{q}}  \bU^{(\rp)}_k$, $k \in K_p$, need no longer be orthonormal.

To this end, let $\mathbf{V}\in \R^{2^q \times 2^p}$ denote the matrix with columns $\mathbf{V}_k := \sigma_k \Restr{\Lambda^{(\rp)}_{q}}  \bU^{(\rp)}_k$, and let 
$ \mathbf{\hat u}_\nu = (\bU^{(\rs)}_{k,\nu})_{k\in K_p}\in \R^{2^p}$.
If $q \geq p$, we compute the Gramian $\mathbf{V}^T\mathbf{V}$, which takes $\Ocal(2^{2p + q} )$ operations. We then directly obtain $\norm{\bv_{[p,q]}}^2 = {\operatorname{tr}(\bV^T\bV)}$. Moreover, for each given $\nu$ we can evaluate 
 \[  \abs{\pi^{(\rs)}_\nu(\bv_{[p,q]})}^2 = \mathbf{\hat u}_\nu^T (\mathbf{V}^T\mathbf{V}) \mathbf{\hat u}_\nu  
 \] 
using $\Ocal(2^{2p})$ operations.
If $p > q$, we first factorize $\mathbf{V}^T = \mathbf{Q} \mathbf{R}$, where $\mathbf{Q} \in \R^{2^p\times 2^q}$ has orthonormal columns and $\mathbf{R} \in \R^{2^q\times 2^q}$. This takes $\Ocal(2^{p + 2q})$ operations. In addition, we form $\mathbf{R}\mathbf{R}^T$ using $\Ocal(2^{3q})$ operations. We then have $\norm{\bv_{[p,q]} }^2 = \operatorname{tr}(\mathbf{R}\mathbf{R}^T)$ and for each $\nu$, we can evaluate $\mathbf{\hat u}_\nu^T \mathbf{Q}$  and subsequently $\abs{\pi^{(\rs)}_\nu(\bv_{[p,q]})}^2 = (\mathbf{\hat u}_\nu^T \mathbf{Q})(\mathbf{R}\mathbf{R}^T)(\mathbf{\hat u}_\nu^T \mathbf{Q})^T$ using $\Ocal(2^{p + q} + 2^{2q})$ operations.

Altogether,  abbreviating $r_\eta \coloneqq \rank(\bv_\eta)$ and $n_{\eta,\rp} \coloneqq \#\supp_\rp(\bv_\eta)$,
the computational work required for obtaining $\norm{\bv_{[p,q]} }$ and $\abs{\pi^{(\rs)}_\nu(\bv_{[p,q]})}$ is of order 
$ (n_{\rs} + n_{\eta,\rp}) r_\eta^2 \leq
   (n_\rs + n_\rp) r^2$.

With these values at hand, it remains to assemble $\bw_\eta$ in the form \eqref{weta assembly}, which amounts to building each $\bw_{p,q}$ as in \eqref{wpq}. The action of the bidiagonal matrices $\bM_j$, on the one hand, for each $p,q$ and $j$ requires $2^{p+q}$ operations, and the total costs for assembling the $\rp$-components of the result are therefore bounded up to a constant by
\begin{align*}
	  \sum_{p,q\geq 0} 2^{p+q} M_{p,q}  &\lesssim   \eta^{-\frac 1S}\|\bv\|_{\Sb}^{\frac 1S}(1+\log_2(n_{\eta,\rp}))^{\frac aS}n_{\eta,\rp} 
	  (1+\log_2(r_\eta))^{\frac aS}(r_\eta)^{1- \frac{\bar s}S}  \\
	&\lesssim   \eta^{-\frac 1{\bar s}}\|\bv\|_{\Sb}^{\frac 1{\bar s}}(1+\abs{\log \eta} )^{\frac {2a}S} \eta^{-\frac1{s_\rp}} \norm{\pi^{(\rp)}(\bv)}^{\frac1{s_\rp}}_{\Acal^{s_\rp}},
\end{align*}
where the estimate on the right is obtained as in \eqref{pestimate} and \eqref{rankfirst}.
Assembling the $\rs$-components requires the action of the approximate operators $\tilde\bA_{p,q,j}$. By our construction, the combined action of $\tilde\bA_{p,q,j}$, $j=1,\ldots,M_{p,q}$, on a single vector $\bU^{(\rs)}_k$, $k\in K_p$, takes a number of operations proportional to the resulting $\#\supp_\rs (\bw_{p,q})$. Consequently, the total number of operations for the $\rs$-components is bounded up to a constant by
\begin{align*}
	 \sum_{p,q\geq 0} 2^p \#\supp_\rs (\bw_{p,q}) 
	  &\lesssim \sum_{p,q\geq 0} 2^p (1 + p )^{\frac{a}{s_\rs}} (1 + q )^{\frac{a}{s_\rs}}  \eta^{-\frac1{s_\rs}}\norm{ \pi^{(\rs)}(\bv_{\eta})}_{\Acal^{s_\rs}}^{\frac1{s_\rs}} \\
	  &\lesssim r_\eta ( 1 + \abs{\log \eta})^{\frac{2a}{s_\rs}} \eta^{-\frac1{s_\rs}}\norm{ \pi^{(\rs)}(\bv_{\eta})}_{\Acal^{s_\rs}}^{\frac1{s_\rs}} \\
	  &\lesssim \eta^{-\frac 1{\bar s}}\|\bv\|_{\Sb}^{\frac 1{\bar s}} ( 1 + \abs{\log \eta})^{\frac{2a}{s_\rs}} \eta^{-\frac1{s_\rs}}\norm{ \pi^{(\rs)}(\bv_{\eta})}_{\Acal^{s_\rs}}^{\frac1{s_\rs}} . \qedhere
\end{align*}
\end{proof}

\end{appendix}

\end{document}